\numberwithin{equation}{section}
\newtheorem{thm}{Theorem}[section]
\newtheorem{lemma}[thm]{Lemma}
\newtheorem{prop}[thm]{Proposition}
\newtheorem{cor}[thm]{Corollary}
{\theorembodyfont{\rmfamily}

\newtheorem{rmk}[thm]{Remark}
}
\newcommand{\qed}{\hfill \mbox{\raggedright \rule{.07in}{.1in}}}
\newenvironment{proof}{\vspace{1ex}\noindent{\bf
Proof}\hspace{0.5em}}{\hfill\qed\vspace{1ex}}
\newenvironment{pfof}[1]{\vspace{1ex}\noindent{\bf Proof of
#1}\hspace{0.5em}}{\hfill\qed\vspace{1ex}}
\newcommand{\R}{{\mathbb R}}
\newcommand{\C}{{\mathbb C}}
\newcommand{\Z}{{\mathbb Z}}
\newcommand{\N}{{\mathbb N}}
\newcommand{\cC}{{\mathcal C}}
\newcommand{\cD}{{\mathcal D}}
\newcommand{\cH}{{\mathcal H}}
\newcommand{\cP}{{\mathcal P}}
\newcommand{\tcP}{{\widetilde{\mathcal P}}}
\newcommand{\hB}{{\widehat B}}
\newcommand{\hcD}{{\widehat {\mathcal D}}}
\newcommand{\tcD}{{\widetilde {\mathcal D}}}
\newcommand{\eps}{\varepsilon}
\newcommand{\Leb}{\operatorname{Leb}}
 \newcommand{\type}{\operatorname{type}}
 \newcommand{\range}{\operatorname{range}}
\newcommand{\diam}{\operatorname{diam}}
\title{Good inducing schemes for uniformly hyperbolic flows,
and applications to exponential decay of correlations}
\author{
Ian Melbourne \thanks{Mathematics Institute, University of Warwick, Coventry, CV4 7AL, UK}
\and 
Paulo Varandas \thanks{CMUP and Departamento de Matem\'atica, Universidade Federal da Bahia, 40170-110 Salvador, Brazil}
}
\date{21 February 2023, updated 17 May 2023}
\begin{document}

 \maketitle

 \begin{abstract}
Given an Axiom~A attractor for a $C^{1+\alpha}$ flow ($\alpha>0$), we construct a countable Markov extension with exponential return times in such a way that the inducing set is a smoothly embedded unstable disk. This avoids technical issues concerning irregularity of boundaries of Markov partition elements and enables an elementary approach to certain questions involving exponential decay of correlations for SRB measures.
 \end{abstract}

 \section{Introduction} 
 \label{sec:intro}

Statistical properties~\cite{Bowen75,Ruelle78,Sinai72} of Anosov and Axiom~A diffeomorphisms~\cite{Anosov67,Smale67} were developed extensively in the 1970s.
Key tools were the construction of finite Markov partitions~\cite{Bowen70,Sinai68}
and the spectral properties of transfer operators~\cite{Ruelle68}.
In particular, ergodic invariant probability measures were constructed corresponding to any H\"older potential; moreover, it was shown that hyperbolic basic sets for Axiom~A diffeomorphisms are always exponentially mixing up to a finite cycle for such measures, see for example~\cite{Bowen75,ParryPollicott90,Ruelle78}.

Still in the 1970s, finite Markov partitions were constructed~\cite{Bowen73,Ratner73b} 
for Anosov and Axiom~A flows. This allows us to model each hyperbolic basic set as a suspension flow over a subshift of finite type, enabling the study of thermodynamic formalism (see e.g.~\cite{BowenRuelle75}) and statistical properties 
(see e.g.~\cite{DenkerPhilipp84,Ratner73}).

However, rates of mixing for Axiom~A flows are still poorly understood. 
By~\cite{Pollicott84,Ruelle83}, mixing Axiom~A flows can mix arbitrarily 
slowly. 
Although there has been important progress starting with~\cite{Chernov98,Dolgopyat98a,Liverani04},
it remains an open question whether mixing Anosov flows have exponential decay of correlations~\cite{BowenRuelle75}. 
Very recently, this question was answered positively~\cite{TsujiiZhang23} in the case of $C^\infty$ three-dimensional flows.

It turns out that using finite Markov partitions for flows raises technical issues due to the irregularity of their boundaries~\cite{ABV18,ButterleyWar20,Stoyanov11}. 
Even in the discrete-time setting, it is known that the boundaries of elements of a finite Markov partition need not be smooth~\cite{Bowen78}.
In this paper, we propose using the approach of~\cite{Young98} to circumvent such issues at least in the case of SRB measures. In particular, we show that 
\begin{quote}
\emph{Any attractor for an Axiom~A flow can be modelled by a suspension flow over a full branch countable Markov extension where the inducing set is a smoothly embedded unstable disk. The roof function, though unbounded, has exponential tails.}
\end{quote}
A precise statement is given in Theorem~\ref{thm:induce} below.

\begin{rmk} The approach of Young towers~\cite{Young98} has proved to be highly effective for studying discrete-time examples like planar dispersing billiards and H\'enon-like attractors where suitable Markov partitions are not available. However, as shown in the current paper, there can be advantages (at least in continuous time) to working with countable Markov extensions even when there is a well-developed theory of finite Markov partitions. The extra flexibility of Markov extensions can be used not only to construct the extension but to ensure good regularity properties of the partition elements.
\end{rmk}

As a consequence of Theorem~\ref{thm:induce}, we obtain an elementary proof of the following result:

\begin{thm} \label{thm:A}
 Suppose that $\Lambda$ is an Axiom~A attractor with SRB measure $\mu$ for 
a $C^{1+}$ flow $\phi_t$ with 
$C^{1+}$ stable holonomies\footnote{$C^{1+}$ means $C^r$ for some $r>1$.}
and such that the stable and unstable bundles are not jointly integrable.
Then for all H\"older observables $v,\,w:\Lambda\to\R$,
there exist constants $c,C>0$ such that
\[
\Big|\int_\Lambda v\,w\circ \phi_t\,d\mu
-\int_\Lambda v\,d\mu \int_\Lambda w\,d\mu\Big|\le C e^{-c t}
\quad\text{for all $t>0$}. 
\]
\end{thm}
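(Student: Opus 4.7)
The plan is to combine the Markov extension from Theorem~\ref{thm:induce} with Dolgopyat-type twisted transfer operator estimates, following what is now a standard route once a Young-tower model with exponential tails is available.

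First I would apply Theorem~\ref{thm:induce} to represent $\phi_t$ (on a neighbourhood of $\Lambda$ carrying $\mu$) as a suspension flow with base a full-branch countable Markov map $F:\Delta\to\Delta$, exponentially tailed roof $R:\Delta\to(0,\infty)$, and inducing set a smoothly embedded unstable disk. Using the $C^{1+}$ stable holonomies, I would quotient $F$ along local stable leaves to obtain a Gibbs--Markov uniformly expanding map $\bar F:\bar\Delta\to\bar\Delta$ whose induced roof $\bar R$ differs from $R$ by a coboundary built from the temporal distance function. Telescoping the two-sided H\"older observables into a part constant on stable leaves plus a coboundary (using the exponential contraction of stable manifolds together with smoothness of the stable holonomies) reduces the correlation in question, up to an error decaying exponentially in $t$, to a correlation for the suspension semi-flow over $\bar F$ with roof $\bar R$, acting on H\"older functions on the one-sided quotient.

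The core analytic step is then a uniform spectral bound of Dolgopyat type, $\|\mathcal L_{s-ib\bar R}^n\|\le C|b|^{-\eta}\theta^n$, for the twisted transfer operator of $\bar F$ on a suitable Banach space, valid for $|b|$ large and $s$ near $0$. Such a bound produces exponential decay of correlations for the semi-flow by a now-routine Laplace-inversion/contour-shift argument, and hence for $\phi_t$ after lifting back through the quotient and the suspension. The spectral bound itself follows from Dolgopyat's oscillatory cancellation mechanism, adapted to a countable-alphabet Gibbs--Markov base with exponentially integrable roof, provided that a uniform non-integrability (UNI) hypothesis is verified for $\bar R$.

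Verifying UNI is where I expect the main obstacle to lie. The hypothesis that $E^s\oplus E^u$ is not jointly integrable is equivalent to the temporal distance function on the underlying basic set not being cohomologous to a constant; using $C^{1+}$ stable holonomies and the full-branch structure delivered by Theorem~\ref{thm:induce}, this property has to be transferred to a statement that $\bar R$ is not cohomologous to a locally constant function on the induced symbolic system. The technical difficulty is precisely the unboundedness of the return times: one must produce a \emph{bounded} UNI witness, i.e.\ a finite family of cylinders of uniformly bounded height on which the variation of $\bar R$ along two competing periodic orbits is uniformly bounded below. I would extract this by combining the smoothness of the inducing disk with a standard periodic-orbit shadowing argument on the Axiom~A set. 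Once this bounded UNI is in hand, the Dolgopyat machinery, and therefore the theorem, follows in the usual way.
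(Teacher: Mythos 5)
Your overall architecture is the same as the paper's (inducing scheme from Theorem~\ref{thm:induce}, quotient along stable leaves, Dolgopyat-type estimates under (UNI), with (UNI) deduced from joint nonintegrability), but there are two genuine gaps at exactly the points where the details matter. First, the roof function. You propose to quotient the two-sided tower and correct the roof by ``a coboundary built from the temporal distance function''. That coboundary is an infinite sum over future iterates and is in general only H\"older along the unstable disk, whereas the Dolgopyat mechanism you invoke needs the roof to be $C^1$ on partition elements with $\sup_{h\in\cH}|D(r\circ h)|_\infty<\infty$ (condition~(iii) of Section~\ref{sec:semiflow}); moreover, since the unstable direction is multidimensional, (UNI) itself is formulated through $D\psi_{h_1,h_2}\cdot\ell$, and the cancellation argument (Lemma~\ref{lem:cancel}, flowing along a vector field and using the mean value theorem) breaks down for a merely H\"older roof. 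The paper avoids this entirely: because Theorem~\ref{thm:induce} returns points to the centre-stable slab $\hcD$, the natural roof is $r=R+r_0$ with $r_0$ the flow time to the stable cross-section, which is $C^{1+}$ by the holonomy hypothesis; in the coordinates of Propositions~\ref{prop:app2}--\ref{prop:app3} this roof is already constant on stable leaves, so no cohomological correction is ever made. If you keep your route you must justify why your corrected roof can be taken $C^1$ in the unstable direction; as written, this step would fail.

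Second, the verification of (UNI). You correctly flag it as the main obstacle, but your plan (a ``bounded UNI witness'' on cylinders of bounded height, extracted by a periodic-orbit shadowing argument) is only a hope, and it is also harder than necessary. The paper's Lemma~\ref{lem:D} argues by contraposition via the equivalence in~\cite[Proposition~7.4]{AGY06}: if (UNI) fails, then $r=\xi\circ F-\xi+\zeta$ with $\xi$ continuous and $\zeta$ constant on partition elements; extending $\xi,\zeta$ to be constant on stable leaves and telescoping along inverse branches (which exist because $F$ is full branch) shows that the temporal distortion function vanishes identically, i.e.\ the bundles are jointly integrable. No periodic orbits, no uniformity over long cylinders, and no separate ``bounded witness'' are needed --- the branches $h_1,h_2\in\cH_{n_0}$ come for free from the AGY equivalence. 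Finally, note that Theorem~\ref{thm:flow} gives decay for observables with controlled flow-derivative, while Theorem~\ref{thm:A} is stated for H\"older $v,w$ on $\Lambda$; the missing last step is the standard mollification/interpolation along the flow direction as in~\cite{Dolgopyat98a}, which your outline omits but which is routine.
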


\begin{rmk}
Joint nonintegrability holds for an open and dense set of Axiom~A flows and their attractors, see~\cite{FMT07} and references therein. 
It implies mixing and is equivalent to mixing for codimension one Anosov flows. It is conjectured to be equivalent to mixing for Anosov flows~\cite{Plante72}.
\end{rmk}

\begin{rmk}
(a) In the case when the unstable direction is one-dimensional and the stable holonomies are $C^2$, this result is due to~\cite{BaladiVallee05,AGY06,ABV16,ABV18}.
In particular, using the fact that stable bunching is a robust sufficient condition for smoothness of stable holonomies together with the robustness of joint nonintegrability,~\cite{ABV16} constructed the first robust examples of 
Axiom~A flows with exponential decay of correlations. 
The smoothness condition on stable holonomies was relaxed from $C^2$ to $C^{1+}$ in~\cite{AraujoM16} extending the class of examples in~\cite{ABV16}.
This class of examples is extended further by Theorem~\ref{thm:A} 
with the removal of the one-dimensionality restriction on unstable manifolds.

\vspace{1ex} \noindent
(b) There is no restriction on the dimension of unstable manifolds in~\cite{AGY06}, and it is not surprising that the smoothness assumption on stable holonomies can also be relaxed as in~\cite{AraujoM16}. 
However, there is a crucial hypothesis in~\cite{AGY06} on the regularity of the inducing set in the unstable direction which is nontrivial in higher dimensions.

Theorem~\ref{thm:A} is stated in the special case of Anosov flows in~\cite{ButterleyWar20}.
In~\cite[Appendix]{ButterleyWar20} it is argued that at least in the Anosov case the Markov partitions of~\cite{Ratner73b} are sufficiently regular that the methods in~\cite{AGY06} can be pushed through. 
In~\cite{ABV18}, a sketch is given of how to prove Theorem~\ref{thm:A} also in the Axiom A case, but the details are not fully worked out.

As mentioned, our approach in this paper completely bypasses such issues since our inducing set is a smoothly embedded unstable disk.
Moreover, our method works equally well for Anosov flows and Axiom~A attractors.
As a consequence, we recover the examples in~\cite{ButterleyWar20}, in particular that codimension one volume-preserving mixing $C^{1+}$ Anosov flows are exponentially mixing in dimension four and higher.
\end{rmk}

The remainder of the paper is organised as follows.
In Section~\ref{sec:induce}, we state precisely and prove our result on good inducing for attractors of Axiom~A flows.
In Section~\ref{sec:expdecay}, we prove a result on exponential mixing for a class of skew product Axiom~A flows, extending/combining the results in~\cite{AraujoM16,AGY06}.
In Section~\ref{sec:proof}, we complete the proof of Theorem~\ref{thm:A}.

\section{Good inducing for attractors of Axiom~A flows}
\label{sec:induce}

Let $\phi_t: M\to  M$ be a $C^{1+}$ flow
defined on a compact Riemannian manifold $(M,d_M)$,
 and let 
$\Lambda\subset M$ be a closed $\phi_t$-invariant subset.
We assume that $\Lambda$ is an attracting transitive uniformly hyperbolic set with adapted norm and that $\Lambda$ is not a single trajectory.
In particular, there is a continuous $D\phi_t$-invariant splitting $T_\Lambda M=E^s\oplus E^c\oplus  E^u$ where $E^c$ is the one-dimensional central direction tangent to the flow, and
there exists $\lambda\in(0,1)$ such that
$|D\phi_t v|\le\lambda^t|v|$ for all $v\in E^s$, $t\ge 1$;
$|D\phi_{-t} v|\le\lambda^t|v|$ for all $v\in E^u$, $t\ge 1$. 
Since the time-$s$ map $\phi_s:\Lambda\to\Lambda$ is ergodic for all but countably many choices of $s\in\R$~\cite{PughShub71},
we can scale time slightly if necessary so that $\phi_{-1}:\Lambda\to\Lambda$ is transitive.
Then there exists $p\in \Lambda$ such that $\bigcup_{i\ge1}\phi_{-i}p$ is dense in $\Lambda$.

We can define (local) stable disks $W^s_\delta(y)=\{z\in W^s(y):d_M(y,z)<\delta\}$ for $\delta>0$ sufficiently small for all $y\in \Lambda$.
Define 
local centre-stable disks $W^{cs}_\delta(y)=\bigcup_{|t|<\delta}\phi_t W^s_\delta(y)$.
Let $\Leb$ and $d$ denote induced Lebesgue measure and induced distance on local unstable manifolds.
It is convenient to define local unstable disks 
$W^u_\delta(y)=\{z\in W^u(y):d(y,z)<\delta\}$ using the induced distance.

For $\delta_0$ small, define $\cD=W^u_{\delta_0}(p)$
and $\hcD=\bigcup_{x\in \cD}W^{cs}_{\delta_0}(x)$.
Define $\pi:\hcD\to\cD$ such that $\pi| W^{cs}_{\delta_0}(x)\equiv x$.
Whenever $\phi_ny\in \hcD$, we set $g_ny=\pi(\phi_ny)$.

We are now in a position to give a precise description of our inducing scheme.

\begin{thm} \label{thm:induce}
There exists an open unstable disk $Y=W^u_\delta(p)\subset \cD$ (for some $\delta\in (0,\delta_0)$) and a discrete return time function $R:Y\to \Z^+\cup\{\infty\}$ such that 
\begin{itemize}
\item[(i)] $\Leb(R>n)=O(\gamma^n)$ for some $\gamma\in(0,1)$;
\item[(ii)]  Each connected component of $\{R=n\}$ is mapped by $\phi_n$ into $\hcD$ and
mapped homeomorphically by $g_n$ onto $Y$.
\end{itemize}
\end{thm}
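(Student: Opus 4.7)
The plan is to adapt Young's tower construction~\cite{Young98} to this flow setting, exploiting two ingredients: uniform expansion of local unstable disks under $\phi_t$, and density of the backward orbit $\bigcup_{i\ge 1}\phi_{-i}p$ in $\Lambda$. The inducing set $Y=W^u_\delta(p)$ is by construction a smoothly embedded unstable disk, and the full-branch property (ii) will be built into the definition of $R$; the real content of the theorem is the exponential tail bound (i).

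For each $n\ge 1$ and each $x\in Y$ with $\phi_n x\in\hcD$, let $V\subset Y$ be the maximal connected neighbourhood of $x$ on which $g_n=\pi\circ\phi_n$ is continuously defined with image contained in $\cD$. The unstable expansion $|D\phi_n v|\ge \lambda^{-n}|v|$ for $v\in E^u$ guarantees that small sub-disks of $Y$ stretch uniformly exponentially under iteration. I set $R(x)=n$ to be the first index for which such a component $V\ni x$ satisfies $g_n(V)=Y$ with $g_n|V$ a homeomorphism onto $Y$; for earlier $n$, either $g_n(V)\subsetneq Y$ or $g_n|V$ is not injective, and $R$ is left undefined at $x$ until a later $n$ works. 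This definition automatically yields (ii), since by choice of component $\phi_n(V)\subset\hcD$ and $g_n|V:V\to Y$ is a homeomorphism. Density of $\{\phi_{-i}p\}$ in $\Lambda$, combined with uniform contraction on $W^{cs}$, ensures that for $\Leb$-a.e.\ $x\in Y$ some such $n=R(x)$ eventually exists.

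The main obstacle is (i). My plan is to reduce it to exponential mixing of a time-$k$ map $\phi_k:\Lambda\to\Lambda$: for $k\ge 1$ large enough, $\phi_k$ acts as a topologically mixing uniformly hyperbolic diffeomorphism on the basic set, hence is exponentially mixing for its SRB measure by classical Bowen--Ruelle--Sinai theory. The $C^{1+\alpha}$ hypothesis gives bounded distortion for $\phi_n$ restricted to unstable disks, allowing one to pass between the ambient SRB measure and induced Lebesgue $\Leb$ on $Y$ up to multiplicative constants. A first-hit argument along unstable leaves then yields $\Leb\{R>n\}\le C\gamma^n$ with $\gamma\in(0,1)$ governed by the mixing rate of $\phi_k$. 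The counting of connected components of $\{R>n\}$ is controlled, as is standard, by balancing the topological entropy of $\phi_1|\Lambda$ against the unstable expansion exponent.

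Smooth embedding is automatic throughout: each component of $\{R=n\}$ is an inverse-branch image of the smooth disk $Y$ under $g_n$, and pulling back the smooth structure yields a smoothly embedded sub-disk of $W^u_{\delta_0}(p)$. I expect the most delicate step to be making the first-hit estimate uniform in $x\in Y$, which requires a careful covering of $Y$ by sub-disks on which $g_n$ has controlled distortion and geometry; in particular one must track how $\gamma$ depends on $\delta$ so that one can \emph{first} fix the tail rate $\gamma<1$ and \emph{then} select the size $\delta$ of the inducing disk $Y$.
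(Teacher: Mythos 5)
There are two genuine gaps. First, your ``greedy'' definition of $R$ (take the first $n$ for which the maximal component $V\ni x$ is mapped by $g_n$ homeomorphically onto $Y$) does not by itself secure property (ii): a full branch formed at time $n$ will in general overlap regions where $R$ has already been assigned at earlier times, and once you delete $\{R<n\}$ the surviving piece of $V$ is no longer a component mapping \emph{onto} $Y$; if instead you only accept branches disjoint from all earlier returns, you must show that points accumulating on the boundaries of old returns are still captured at an exponential rate. Managing exactly this interference is the heart of the paper's construction: the sets $A_n$ (free region) and $B_n$ (collars around each component of $\{R=m\}$, fibred by the annuli $I_k$ with geometrically shrinking widths), the wait-time functions $t_n$, and Propositions~\ref{prop:eps} and~\ref{prop:eps2}, which guarantee that each new generation of branches lies inside $A_{n-1}$ and does not cut into previously created collars. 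Your proposal contains no substitute for this mechanism.

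Second, your route to the tail bound (i) is unsound. The time-$k$ map $\phi_k|_\Lambda$ is not a uniformly hyperbolic diffeomorphism: it has a neutral centre direction along the flow, so classical Bowen--Ruelle--Sinai theory gives no exponential mixing for it. Indeed, exponential mixing of time-$k$ maps of Axiom~A flows is essentially equivalent to the hard problem the paper is about (and requires extra hypotheses such as joint nonintegrability), so the reduction is circular; worse, Theorem~\ref{thm:induce} is supposed to hold for \emph{every} transitive Axiom~A attractor, including non-mixing ones, so no argument resting on mixing can prove it in the stated generality. The paper instead obtains exponential tails from purely elementary ingredients: transitivity in the form of a finite $\delta$-dense backward orbit segment $\bigcup_{i=1}^{N_1}\phi_{-i}p$ (used in Lemma~\ref{lem:keyfact} to show a definite fraction of $A_{n-1}$ returns within $N$ steps), uniform expansion on unstable disks, bounded distortion~\eqref{eq:bdd}--\eqref{eq:bddpi}, and the measure recursion between $A_n$ and $B_n$ (Lemma~\ref{lem:facts}, Corollaries~\ref{cor:facts}--\ref{cor:facts2}), which together yield $\Leb(R>n+N)\le(1-d_2^{-1})\Leb(R\ge n)$ in Corollary~\ref{cor:tails}. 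Your final remark about counting components via topological entropy plays no role in, and is not a substitute for, this measure-theoretic recursion.
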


\begin{rmk} \label{rmk:induce}
Let $\cP$ be the partition of $Y$ consisting of connected components of $\{R=n\}$ for $n\ge1$. (It follows from Theorem~\ref{thm:induce}(i) that $\cP$ is a partition of $Y\bmod0$.) Define $F:Y\to Y$, $F=g_R=\pi\circ \phi_R$.
Note that $F$ is locally the composition of a time-$R$ map $\phi_R$ (where $R$ is constant on each partition element) with a centre-stable holonomy.
Since centre-stable holonomies are H\"older continuous,
it follows that $F$ maps partition elements $U\in\cP$ homeomorphically onto $Y$ 
and that $F|_U:U\to Y$ is a bi-H\"older bijection.
If moreover, the centre-stable holonomies are $C^1$, then the partition elements are diffeomorphic to disks.
\end{rmk}

In the remainder of this section, we prove Theorem~\ref{thm:induce}.
Our proof is essentially the same as in~\cite[Section~6]{Young98} for Axiom~A diffeomorphisms, but we closely follow the treatment in~\cite{AlvesLuzzattoPinheiro05} which provides many of the details of arguments sketched in~\cite{Young98}.

\paragraph{Choice of constants}
Choose $\delta_0>0$ so that 
the following
bounded distortion property holds: there exists $C_1\ge1$ so that
\begin{equation}\label{eq:bdd}
\frac{|\det D\phi_n(x)|E^u|}{|\det D\phi_n(y)|E^u|} \leq C_1
\end{equation}
for every $n\ge 1$ and all $x,y\in \Lambda$ with $\phi_nx,\phi_ny$ in the same unstable disk such that $d(\phi_jx,\phi_jy)<4\delta_0$ for all $0\le j \le n$. 

By standard results about stable holonomies, $\pi$ is absolutely continuous and $C^\alpha$ for some $\alpha\in(0,1)$ when restricted to unstable disks in $\hcD$.
For $\delta_0$ sufficiently small, there exists $C_2$, $C_3\ge1$ such that
\begin{equation}\label{eq:bddpi}
C_2^{-1}\le \frac{\Leb(\pi(E))}{\Leb(E)} \leq C_2
\end{equation}
for all Lebesgue-measurable subset $E\subset W^u_{\delta_0}(y)\cap\hcD$ and all $y\in\Lambda$,
and
\begin{equation} \label{eq:C2}
d(\pi x,\pi y)\le C_3 d(x,y)^\alpha
\end{equation}
for all $x,y\in \hcD$ with $x,y$ in the same unstable disk such that $d(x,y)<4\delta_0$.

Let $d_u=\dim E^u$ and
fix $L\ge3$ so that 
\begin{equation}\label{eq:L}
C_1C_2^2\frac{2^{d_u}-1}{(L-1)^{d_u}}<\frac14 .
\end{equation}

By the local product structure, there exists $\delta_1\in(0,\delta_0)$ such that
$W^{cs}_{\delta_0}(x)\cap W^u_{\delta_0}(y)$ consists of precisely one point for
all $x,y\in\Lambda$ with $d_M(x,y)<4\delta_1$. Similarly, there exists $\delta\in(0,\delta_1)$ such that
$W^{cs}_{\delta_1}(x)\cap W^u_{\delta_1}(y)$ consists of precisely one point for
all $x,y\in\Lambda$ with $d_M(x,y)<(L+1)\delta$.
Moreover, since local unstable/stable disks have bounded curvature, the intersection point $z\in W^{cs}_{\delta_1}(x)\cap W^u_{\delta_1}(y)$ satisfies $d(z,y)\le C_4d_M(x,y)$ where $C_4\ge1$ is a constant.
Shrink $\delta>0$ if necessary so that $C_3(3\delta)^\alpha<\frac12\delta_0$ and $C_4(L+1)\delta<\delta_0$.
Choose
$N_1\ge1$ such that $\bigcup_{i=1}^{N_1} \phi_{-i}p$ is $\delta$-dense in $\Lambda$.

\paragraph{Construction of the partition}

We consider various small neighbourhoods $\cD_c=W^u_{c\delta}(p)$ with $c\in\{1,2,L-1,L\}$.
Define $\tcD_c=\bigcup_{x\in \cD_c}W^{cs}_{\delta_1}(x)$.

Take $Y=\cD_1$.
Define a partition $\{I_k:k\ge1\}$ of $\cD_2\setminus \cD_1$,
\[
I_k=\big\{y\in \cD_2: \delta(1+\lambda^{\alpha k}) \le d(y,p) < \delta(1+\lambda^{\alpha(k-1)})\big\}.
\]

Fix $\eps>0$ small (as stipulated in Propositions~\ref{prop:eps} and~\ref{prop:eps2} and Lemma~\ref{lem:keyfact} below). 
We define sets $Y_n$ and functions 
$t_n:Y_n\to\N$, and $R:Y\to \Z^+$ inductively, with
$Y_n=\{R> n\}$.
Define $Y_0=Y$ and $t_0\equiv 0$. 
Inductively, suppose that $Y_{n-1}=Y\setminus\{R<n\}$ and that $t_{n-1}:Y_{n-1}\to\N$ is given. Write $Y_{n-1}=A_{n-1}\,\dot\cup\,B_{n-1}$ where
\[
A_{n-1}=\{t_{n-1}=0\}, \qquad
B_{n-1}=\{t_{n-1}\ge1\}.
\]
Consider the neighbourhood 
\[
A^{(\eps)}_{n-1}=\big\{y\in Y_{n-1}: d(\phi_ny,\phi_nA_{n-1})<\eps\big\}
\]
 of the set $A_{n-1}$.
Define $U_{nj}^L$, $j\ge1$, to be the connected components of $A^{(\eps)}_{n-1}\cap \phi_{-n}\tcD_L$ that are mapped inside $\tcD_L$ by $\phi_n$ and mapped homeomorphically onto $\cD_L$ by $g_n$. 
Let
\[
U_{nj}^c=U_{nj}^L\cap g_n^{-1}\cD_c \quad\text{for $c=1,2,L-1$.}
\]
Define $R|U_{nj}^1=n$ for each $U_{nj}^1$ and take $Y_n=Y_{n-1}\setminus \bigcup_j U_{nj}^1$.
Finally, define $t_n:Y_n\to\N$ as
\[
t_n(y) =
\begin{cases}
\begin{array}{cl}
k, & y\in \bigcup_j U_{nj}^2 \text{ and } g_ny\in I_k \, \text{for some $k\ge 1$}\\
0, & y\in A_{n-1} \setminus \bigcup_j U_{nj}^2 \\
t_{n-1}(y)-1, & y\in B_{n-1} \setminus \bigcup_j U_{nj}^2
\end{array}
\end{cases}
\]
and take $A_n=\{t_n=0\}$, 
$B_n=\{t_n\ge1\}$ and $Y_n = A_n\,\dot\cup\,B_n$.

\begin{rmk} By construction, property~(ii) of Theorem~\ref{thm:induce} is satisfied.
 It remains to verify that
$\Leb(R>n)$ decays exponentially.
\end{rmk}

\paragraph{Visualisation of $B_n$.}
The set $B_n$ is a disjoint union $B_n=\bigcup_{m=1}^n C_n(m)$ where $C_n(m)$ is a disjoint union of \emph{collars} around each component of $\{R= m\}$.
Each collar in $C_n(m)$ is homeomorphic under $g_m$ to $\bigcup_{k\ge n-m+1}I_k$ with outer ring homeomorphic under $g_m$ to $I_{n-m+1}$,
and the union of outer rings is the set $\{t_n=1\}$.
This picture presupposes Proposition~\ref{prop:eps} below which guarantees that each new generation of collars $C_n(n)$ does not intersect the set $\bigcup_{1\le m\le n-1}C_{n-1}(m)$ of collars in the previous generation.

\begin{prop} \label{prop:eps}
Choose $\eps<(C_3^{-1}\delta)^{1/\alpha}$ sufficiently small that $W^u_\eps(x)\subset\hcD$ for all $x\in\tcD_L$. Then
 $\bigcup_jU_{nj}^{L-1}\subset A_{n-1}$ for all $n\ge1$.
\end{prop}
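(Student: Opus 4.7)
The plan is to proceed by contradiction, combined with induction on $n$. The base case $n=1$ is trivial since $A_0 = Y$. For the inductive step, I assume $\bigcup_j U_{mj}^{L-1}\subset A_{m-1}$ for all $m<n$, and suppose for contradiction that there is $y\in U_{nj}^{L-1}$ with $y\in B_{n-1}$. The visualisation of $B_{n-1}$ as a union of collars then supplies $m\le n-1$ and an index $i$ with $y\in U_{mi}^2$ and $g_m y\in I_k$ for some $k\ge n-m$, so in particular $\delta<d(g_my,p)<\delta(1+\lambda^{\alpha(n-m-1)})\le 2\delta$. Simultaneously, $y\in U_{nj}^L\subset A^{(\eps)}_{n-1}$ yields $y_0\in A_{n-1}$ with $d(\phi_n y,\phi_n y_0)<\eps$.

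Next I would propagate this proximity back in time. Since $y$ and $y_0$ lie on the common local unstable disk $Y$, and $\phi_t$ expands $E^u$ by at least $\lambda^{-t}$ per unit time, backward iteration contracts unstable distances by $\lambda$ per unit time, so $d(\phi_j y,\phi_j y_0)\le\lambda^{n-j}\eps$ for $0\le j\le n$. Taking $j=m$, the hypothesis $W^u_\eps(x)\subset\hcD$ for $x\in\tcD_L$ ensures $\phi_m y_0\in\hcD$, whence $g_m y_0$ is defined, and the H\"older bound on $\pi$ gives
\[
d(g_m y,g_m y_0)\le C_3\lambda^{\alpha(n-m)}\eps^\alpha\le C_3\eps^\alpha<\delta
\]
by the choice of $\eps$.

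To turn this into a contradiction I would invoke the inductive hypothesis at step $m$: since $L\ge 3$ forces $2\le L-1$, we have $U_{mi}^2\subset U_{mi}^{L-1}\subset A_{m-1}$, so $y\in A_{m-1}$, and $y$ itself witnesses $y_0\in A^{(\eps)}_{m-1}$. For $\eps$ small enough that the perturbation $\lambda^{n-m}\eps$ stays below the thickness of $\tcD_L$ around the interior point $\phi_m y$, one obtains $\phi_m y_0\in\tcD_L$, and by a small-distance connectedness argument $y_0$ belongs to the same component $U_{mi}^L$ as $y$. A careful comparison of $d(g_m y_0,p)$ with the outer radii of the rings $I_{k'}$ should then place $y_0$ in some $U_{m'j'}^2$ with a collar value $k'$ large enough that $t_{n-1}(y_0)\ge 1$, so that $y_0\in B_{n-1}$, contradicting $y_0\in A_{n-1}$.

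The most delicate point will be the last comparison of ring radii. The H\"older estimate only forces the ring indices of $g_m y$ and $g_m y_0$ to differ by at most one, while the contradiction requires $k'\ge n-m$ strictly. The borderline value $k'=n-m-1$ is precisely the one compatible with $y_0\in A_{n-1}$ through the branch of the recursion in which an older collar has just expired at time $n-1$. Eliminating this borderline case will require either a slightly tighter smallness of $\eps$ (still compatible with the quantifier ``sufficiently small'' in the statement, and depending only on $\lambda,\alpha,\delta$) or, following the treatment in \cite{AlvesLuzzattoPinheiro05}, an iterated descent in which a borderline $y_0$ must itself carry an earlier collar, producing a fresh instance of the same dichotomy that terminates after finitely many steps.
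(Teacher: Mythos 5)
There is a genuine gap, and it is exactly at the point you flag as ``delicate'': the contradiction you are aiming for does not exist at the level of a single point. Your strategy is to take one point $y\in U_{nj}^{L-1}\cap B_{n-1}$, use $y\in A_{n-1}^{(\eps)}$ to produce $y_0\in A_{n-1}$ with $d(\phi_n y,\phi_n y_0)<\eps$, pull back to time $m$ and force $y_0\in B_{n-1}$. But a point of $B_{n-1}$ lying in the outer ring of its collar genuinely can belong to $A_{n-1}^{(\eps)}$: just across the outer boundary $\partial_o Q=\partial A_{n-1}\cap Q$ sit points whose countdown expired exactly at time $n-1$ (your borderline index $k'=n-m-1$), and these are bona fide points of $A_{n-1}$. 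Indeed Proposition~\ref{prop:eps2}(a) shows that $A_{n-1}^{(\eps)}\cap B_{n-1}\subset\{t_{n-1}=1\}$, i.e.\ the outer-ring situation is not an exceptional edge case but the only case your argument ever meets; so the implication ``$y_0$ close to a collar point $\Rightarrow y_0\in B_{n-1}$'' is simply false. Neither of your proposed repairs can close this: shrinking $\eps$ cannot separate two regions ($\{t_{n-1}=1\}$ and the adjacent expired annulus inside $A_{n-1}$) that share a boundary, since $d(g_m y,p)$ may be arbitrarily close to the outer radius $\delta(1+\lambda^{\alpha(n-m-1)})$; and nothing forces a borderline $y_0$ to ``carry an earlier collar'', so the iterated descent never gets started.

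What your argument never uses, and what the paper's proof hinges on, is that $U_{nj}^{L-1}$ is not a point but a full $g_n$-preimage of the disk $\cD_{L-1}$ of radius $(L-1)\delta\ge 2\delta$, while all of $U_{nj}^L$ lies in $A_{n-1}^{(\eps)}$. The paper takes $n$ least (your induction is the same device) and splits into two cases. If $U_{nj}^{L-1}\subset B_{n-1}$, it picks the central point $x$ with $g_n x=p$; the nearby point $y\in A_{n-1}$ is outside $U_{nj}^{L-1}$, so the connecting unstable path at time $n$ must cross $\partial U_{nj}^{L-1}$, and projecting by $\pi$ gives $(L-1)\delta\le d(g_nx,g_nz)<C_3\eps^\alpha<\delta$, a contradiction obtained entirely at time $n$ from the Hölder bound \eqref{eq:C2} and the size of $\cD_{L-1}$ — no ring-index bookkeeping at time $m$ is needed. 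If instead $U_{nj}^{L-1}$ straddles $\partial A_{n-1}$, one takes $x\in U_{nj}^{L-1}\cap\partial A_{n-1}$ and needs a point $y\in\partial A_{n-1}^{(\eps)}$ with $d(\phi_nx,\phi_ny)\le\eps$; producing such a $y$ is a genuinely topological step (a $(\dim Y-1)$-sphere inside the collar enclosing $x$, built using the collar visualisation valid at step $n-1$ by minimality of $n$), after which the same scale comparison $g_nx\in\cD_{L-1}$, $g_ny\notin\cD_L$, $d(g_nx,g_ny)\le C_3\eps^\alpha<\delta$ gives the contradiction. Both the use of the $(L-1)\delta$ scale and this boundary/sphere construction are absent from your proposal, and without something playing their role the argument cannot be completed.
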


\begin{proof}
We argue by contradiction.
There is nothing to prove for $n=1$. Let $n\ge2$ be least such that the result fails and choose $j$ such that $U_{nj}^{L-1}$ intersects $B_{n-1}$.
Then either (i)
$U_{nj}^{L-1}\subset B_{n-1}$, or
(ii)
$U_{nj}^{L-1}$ intersects $\partial A_{n-1}$.

\vspace{1ex}
In case (i), choose 
$x\in U_{nj}^{L-1}$ (so in particular $\phi_nx\in\hcD$) with $g_nx=p$. 
Since $U_{nj}^{L-1}\subset U_{nj}^L\subset A_{n-1}^{(\eps)}$, there exists $y\in A_{n-1}$ with $d(\phi_nx,\phi_ny)<\eps$. 
In particular, $\phi_ny\in\hcD$ so $g_ny$ is well-defined.
Note that $x\in U_{nj}^{L-1}$ and $y\not\in U_{nj}^{L-1}$ since $U_{nj}^{L-1}\subset B_{n-1}$.
Hence the geodesic $\ell$ in $\cD$ joining $g_nx$ and $g_ny$ intersects $g_n\partial U_{nj}^{L-1}$.
Choose
$z\in \partial U_{nj}^{L-1}\cap g_n^{-1}\ell$.
Since $g_n=\pi\circ \phi_n$, it follows from~\eqref{eq:C2} that
\[
\delta<(L-1)\delta=d(g_nx,g_nz)< d(g_nx,g_ny)\le C_3
d(\phi_nx,\phi_ny)^\alpha<C_3\eps^\alpha<\delta
\]
which is a contradiction. This rules out case~(i).

\vspace{1ex}
In case~(ii), choose $x\in U_{nj}^{L-1}\cap \partial A_{n-1}$.
We show below that there exists $y\in \partial A_{n-1}^{(\eps)}$ such that $d(\phi_nx,\phi_ny)\le\eps$.
In particular, $g_nx$ and $g_ny$ are well-defined and $d(g_nx,g_ny)\le C_3\eps^\alpha<\delta$.
Since $U_{nj}^L\subset A_{n-1}^{(\eps)}$, we have that
$y\not\in U_{nj}^L$.
It follows that $g_nx\in \cD_{L-1}$ while $g_ny\not\in \cD_L$.
Hence $d(g_nx,g_ny)\ge\delta$ which is the desired contradiction. 

It remains to verify 
that there exists $y\in \partial A_{n-1}^{(\eps)}$ such that $d(\phi_nx,\phi_ny)\le\eps$.
Since~$n$ is least, $B_{n-1}$ is a disjoint union of collars as described in the visualisation above.
Hence there exists a collar $Q\subset C_{n-1}(n-k)$ 
 intersected by $U_{nj}^{L-1}$ for some $1\le k<n$ such that $x$ lies in the outer boundary $\partial_oQ$ of $Q$. 
Note that $\partial_o Q= \partial A_{n-1}\cap Q$. 
Let $D$ denote the disk enclosed by $\partial_oQ$ and let 
\[
S=D\cap\partial (\phi_{-n}B_\eps(\phi_n\partial D)).
\]
We claim that $S\neq\emptyset$ and $S\subset Q$.
Then $S$ is a $(\dim Y-1)$-dimensional sphere contained in $\partial A_{n-1}^{(\eps)}$ and there exists $y\in S$ with the desired properties.
(The point of the claim is that $S$ lies entirely in $Y_{n-1}$.)

Note that $g_{n-k}$ maps $Q$ homeomorphically onto the set $J=\bigcup_{i\ge k}I_i$ which is an annulus of radial thickness $\delta\lambda^{\alpha k}$. 
By~\eqref{eq:C2}, $\phi_{n-k}$
maps $Q$ homeomorphically onto a set $\tilde J=\pi^{-1}J$ of radial thickness at least $(C_3^{-1}\delta\lambda^{\alpha k})^{1/\alpha}
= (C_3^{-1}\delta)^{1/\alpha}\lambda^k$. 

Moreover, 
$\phi_k(\tilde J\cap \phi_{n-k}A_{n-1}^{(\eps)})\subset \phi_nA_{n-1}^{(\eps)}$ is contained in the set of points within $d$-distance~$\eps$ of $\phi_n\partial A_{n-1}^{(\eps)}$, so 
by definition of $\lambda$ we have that
$\tilde J\cap \phi_{n-k}A_{n-1}^{(\eps)}$ is contained in the set of points within $d$-distance $\eps\lambda^k$ of the outer boundary of $\tilde J$. Since $\eps<(C_3^{-1}\delta)^{1/\alpha}$, 
we obtain that
$\tilde J\cap \phi_{n-k}\partial A_{n-1}^{(\eps)}$ is homeomorphic to a $(\dim Y-1)$-dimensional
sphere contained entirely inside $\tilde J$.
Hence
$S=Q\cap \partial A_{n-1}^{(\eps)}$ is homeomorphic to a $(\dim Y-1)$-dimensional
sphere contained entirely inside $Q$, as required.~
\end{proof}

\begin{prop} \label{prop:eps2}
Choose $\eps<\big\{C_3^{-1}\delta(\lambda^{-\alpha}-1)\big\}^{1/\alpha}$.
Then for all $n\ge1$,
\begin{itemize}
\item[(a)]
$A_{n-1}^{(\eps)}\subset \{y\in Y_{n-1}:t_{n-1}(y)\le 1\}$ for all $n\ge1$.
\item[(b)]
$\phi_{-n}W_\eps^u(\phi_nx)\subset A_{n-1}^{(\eps)}$ for all $x\in A_{n-1}$.
\end{itemize}
\end{prop}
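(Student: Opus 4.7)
The plan is to prove both parts by contradiction using a common toolkit. The pointwise backward contraction on unstable disks gives $d(\phi_m y,\phi_m z)\le\lambda^{n-m}d(\phi_n y,\phi_n z)$ for $y,z$ on the same unstable disk and $m\le n$, and composing with $\pi$ via the H\"older estimate~\eqref{eq:C2} yields
\[
d(g_m y,g_m z)\le C_3\lambda^{\alpha(n-m)}d(\phi_n y,\phi_n z)^\alpha\le C_3\lambda^{\alpha(n-m)}\eps^\alpha
\]
whenever $d(\phi_n y,\phi_n z)<\eps$. Geometric lower bounds will come from the definition of the annuli $I_k$ and the specific bound on $\eps$.

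For part (a), suppose $y\in A_{n-1}^{(\eps)}$ with $t_{n-1}(y)\ge 2$. By the visualisation, $y$ lies in a unique collar $Q\in C_{n-1}(m)$ with $g_m y\in I_k$ for some $k\ge n-m+1$, so $d(g_m y,p)<\delta(1+\lambda^{\alpha(n-m)})$. Pick $z\in A_{n-1}$ with $d(\phi_n y,\phi_n z)<\eps$. Since $y\in Q\subset U_{mj}^{L-1}\subset A_{m-1}$ by Proposition~\ref{prop:eps}, the bound on $d(\phi_m y,\phi_m z)$ forces $z\in A_{m-1}^{(\eps)}\cap\phi_{-m}\tcD_L$ and into the same connected component $U_{mj}^L$ as $y$ (for $\eps$ sufficiently small, using openness of $\tcD_L$). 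Now $z\in A_{n-1}$ rules out $g_m z\in\bigcup_{i\ge n-m}I_i$ (which would put $z\in B_{n-1}$) and $g_m z\in\cD_1$ (which would force $R(z)=m$), so $g_m z\in\bigcup_{i<n-m}I_i\cup(\cD_L\setminus\cD_2)$. An elementary calculation with the definition of the $I_k$ then gives
\[
d(g_m y,g_m z)\ge\delta\lambda^{\alpha(n-m)}(\lambda^{-\alpha}-1),
\]
contradicting the upper bound and the choice $\eps<\{C_3^{-1}\delta(\lambda^{-\alpha}-1)\}^{1/\alpha}$.

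For part (b), given $x\in A_{n-1}$ and $y\in\phi_{-n}W_\eps^u(\phi_n x)$, the distance condition $d(\phi_n y,\phi_n A_{n-1})<\eps$ follows immediately from $x\in A_{n-1}$, so the content is to show $y\in Y_{n-1}$. Suppose instead $y\in U_{mj}^1$ for some $m\le n-1$; by Proposition~\ref{prop:eps} then $y\in A_{m-1}$. A continuity argument parallel to (a) places $x\in U_{mj}^L$. The case $g_m x\in\cD_1$ would force $R(x)=m$, contradicting $x\in Y_{n-1}$; the case $g_m x\in\cD_L\setminus\cD_2$ yields $d(g_m x,g_m y)\ge\delta$, beating the upper bound for $\eps$ small. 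In the remaining case $g_m x\in\cD_2\setminus\cD_1$, the point $x$ lies in some collar with $t_m(x)=k\ge1$; tracking $t$ forward in time (and using Proposition~\ref{prop:eps} to see that any re-absorption into a later collar requires $x$ to have first entered $A$) one finds $k\le n-1-m$, whence $g_m x\in I_k$ gives $d(g_m x,g_m y)\ge\delta\lambda^{\alpha k}\ge\delta\lambda^{\alpha(n-1-m)}$, again beating $C_3\lambda^{\alpha(n-m)}\eps^\alpha$.

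The main obstacle is the combinatorial case analysis pinning down where $g_m z$ or $g_m x$ can lie within $\cD_L$: Proposition~\ref{prop:eps}, the collar structure from the visualisation, and the requirement that both points inhabit the same branch $U_{mj}^L$ must be woven together to eliminate every non-contradictory position. The contraction--H\"older estimate is the only analytic input; everything else is bookkeeping about the inductive construction.
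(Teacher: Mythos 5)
Your reduction of both parts to locating $\pi\phi_m z$ (resp.\ $\pi\phi_m x$) inside $\cD_L$, and the arithmetic with the annuli $I_k$ against the bound $\eps^\alpha<C_3^{-1}\delta(\lambda^{-\alpha}-1)$, are consistent; but the argument hinges on a claim you never prove: that the nearby point $z\in A_{n-1}$ in (a) (resp.\ $x$ in (b)) lies in the \emph{same branch} $U^L_{mj}$ as $y$. Every step of your case analysis needs this: without it, $\pi\phi_m z\in\cD_1$ does not force $R(z)=m$, and $\pi\phi_m z\in I_i$ does not force $t_m(z)=i$, since membership in $\{R=m\}$ and in the collars is defined through the sets $U^c_{mj}$, not through the position of the projection alone ($z$ could a priori lie in a component of $A^{(\eps)}_{m-1}\cap\phi_{-m}\tcD_L$ that is not mapped homeomorphically onto $\cD_L$, in which case it escapes your case analysis entirely). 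Your justifications --- ``for $\eps$ sufficiently small, using openness of $\tcD_L$'' and ``a continuity argument parallel to (a)'' --- are not proofs. A connectivity argument would need the pulled-back unstable geodesic joining $y$ to $z$ to stay inside $A^{(\eps)}_{m-1}\cap\phi_{-m}\tcD_L$, which requires (i) that its intermediate points lie in $Y_{m-1}$ (this is essentially part (b) of the very proposition at the earlier time $m$, so you would have to set up a joint induction on $n$, which you do not), and (ii) that their $\phi_m$-images lie in $\tcD_L$, i.e.\ have centre-stable depth less than $\delta_1$ --- openness gives no uniform $\eps$ here, since points of $\hcD$ only have depth bounded by $\delta_0$ and $\phi_m U^L_{mj}$ comes with no stated margin below $\delta_1$.

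The paper avoids all of this: it takes the geodesic $\ell\subset W^u_\eps(\phi_nx)$ from $\phi_nx$ to $\phi_ny$, observes that $y$ lies inside the region bounded by the inner boundary $Q_2$ of the outer ring $Q$ of its collar while $x\in A_{n-1}$ lies outside the outer boundary $Q_1$, so $\phi_{-n}\ell$ must cross both, at points $q_1,q_2$. These crossing points lie in the collar itself, where $g_{n-k}$ is a homeomorphism onto $I_k$; hence $d(g_{n-k}q_1,g_{n-k}q_2)\ge\delta(\lambda^{-\alpha}-1)\lambda^{\alpha k}$ comes directly from the two boundary components of $I_k$, and expansion plus \eqref{eq:C2} give $d(\phi_nq_1,\phi_nq_2)>\eps$, contradicting $d(\phi_nq_1,\phi_nq_2)\le d(\phi_nx,\phi_ny)<\eps$; part (b) reuses the same two points with $y\in\{R=n-k\}$. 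No branch-membership claim about $x$ or $z$ is ever needed. If you want to salvage your route, the cleanest repair of the same-branch step is not a path argument but injectivity: set $z'=(g_m|_{U^L_{mj}})^{-1}(\pi\phi_m z)$ and use uniqueness of local product intersections $W^{cs}_{\delta_0}\cap W^u_{\delta_0}$ at the relevant scale to conclude $z'=z$; but as written the step is a genuine gap.
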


\begin{proof}
(a) Suppose that $t_{n-1}(y)>1$.
Then there exists a collar in $C_{n-1}(n-k)$ containing $y$.
Let $Q$ denote the outer ring of the collar with outer boundary $Q_1$ and inner boundary $Q_2$. Then $t_{n-1}|Q\equiv1$ and $t_{n-1}(y)>1$, so $y$ lies inside the region bounded by $Q_2$.

Suppose for contradiction that $y\in A_{n-1}^{(\eps)}$. Then we can choose $x\in A_{n-1}$ with
$d(\phi_nx,\phi_ny)<\eps$. Let $\ell$ be the geodesic in $W^u_\eps(\phi_nx)$ connecting $\phi_nx$ to $\phi_ny$
and define $q_j\in Q_j\cap \phi_{-n}\ell$ for $j=1,2$.

Recall that $Q$ is homeomorphic under $g_{n-k}$ to $I_k$.
Moreover, $g_{n-k}q_j$ lie in distinct components of the boundary of $I_k$,
so 
\[
d(g_{n-k}q_1,g_{n-k}q_2)\ge\delta(\lambda^{\alpha(k-1)}-\lambda^{\alpha k})=\delta(\lambda^{-\alpha}-1)\lambda^{\alpha k}.
\]
Hence
\[
\begin{split}
d(\phi_nq_1,\phi_nq_2)
& \ge \lambda^{-k} d(\phi_{n-k}q_1,\phi_{n-k}q_2) 
\\ & \ge \lambda^{-k}\big\{C_3^{-1}
d(g_{n-k}q_1,g_{n-k}q_2)\big\}^{1/\alpha}\ge
\big\{C_3^{-1}\delta(\lambda^{-\alpha}-1)\big\}^{1/\alpha}
>\eps.
\end{split}
\]
But $d(\phi_nq_1,\phi_nq_2)\le d(\phi_ny,\phi_nx)<\eps$ so we obtain the desired contradiction.

\vspace{1ex}
(b) 
Let $x\in A_{n-1}$ and $y\in \phi_{-n}W^u_\eps(\phi_nx)$. 
Note that $y\in A_{n-1}^{(\eps)}$ if and only if $y\in Y_{n-1}$. Hence we
must show that $y\in Y_{n-1}$. If not, then there
exists $k\ge1$ such that
$y\in \{R=n-k\}$. Define $Q\subset C_{n-1}(n-k)$ to be the outer ring of the corresponding collar. Choosing $q_1$ and $q_2$ as in part~(a) we again obtain a contradiction.
\end{proof}

\begin{lemma} \label{lem:facts}
There exists $a_1>0$ such that for all $n\ge1$,
\begin{itemize}
\item[(a)] $\Leb(B_{n-1}\cap A_n)\ge a_1\Leb(B_{n-1})$.
\item[(b)] $\Leb(A_{n-1}\cap B_n)\le \tfrac14\Leb(A_{n-1})$.
\item[(c)] $\Leb(A_{n-1}\cap \{R=n\})\le \tfrac14\Leb(A_{n-1})$.
\end{itemize}
\end{lemma}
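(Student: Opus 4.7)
The plan is to reduce all three parts to bookkeeping with the inductive definitions of $t_n,A_n,B_n$ combined with bounded distortion estimates for $g_n$ and $g_m$ on individual partition elements. The crucial structural input will be Proposition~\ref{prop:eps}: since $L\ge 3$ we have $\bigcup_j U_{nj}^2\subset\bigcup_j U_{nj}^{L-1}\subset A_{n-1}$, so the union of the $U_{nj}^2$ is disjoint from $B_{n-1}$.

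For part~(a), I would combine this disjointness with the definition of $t_n$ to conclude that $t_n(y)=t_{n-1}(y)-1$ for every $y\in B_{n-1}$, whence $B_{n-1}\cap A_n=\{y\in B_{n-1}:t_{n-1}(y)=1\}$ is the union of outer rings of all collars in $B_{n-1}$. For a collar $Q\subset C_{n-1}(m)$, the map $g_m$ is a homeomorphism sending $Q$ onto $\bigcup_{k\ge n-m}I_k$ and its outer ring onto $I_{n-m}$. Setting $s=\lambda^{\alpha(n-m-1)}\in(0,1]$, a direct computation with annuli gives
\[
\frac{\Leb(I_{n-m})}{\Leb\bigl(\bigcup_{k\ge n-m}I_k\bigr)}=\frac{(1+s)^{d_u}-(1+\lambda^\alpha s)^{d_u}}{(1+s)^{d_u}-1}\ge\eta
\]
for some $\eta=\eta(\lambda,\alpha,d_u)>0$, since this ratio is continuous and strictly positive on $(0,1]$ with positive limit $1-\lambda^\alpha$ as $s\to 0$. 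Pulling back by $g_m$ using \eqref{eq:bdd} and \eqref{eq:bddpi} yields $\Leb(B_{n-1}\cap A_n\cap Q)\ge\eta(C_1C_2^2)^{-1}\Leb(Q)$, and summing over collars proves (a) with $a_1=\eta/(C_1C_2^2)$.

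For parts~(b) and (c), the definition of $t_n$ together with $\bigcup_j U_{nj}^2\subset A_{n-1}$ identifies
\[
A_{n-1}\cap\{R=n\}=\bigcup_j U_{nj}^1,\qquad A_{n-1}\cap B_n=\bigcup_j(U_{nj}^2\setminus U_{nj}^1).
\]
Applying bounded distortion of $g_n=\pi\circ\phi_n$ on each $U_{nj}^L$ (combining \eqref{eq:bdd} with the pointwise Jacobian bound for $\pi$ implicit in \eqref{eq:bddpi}) gives
\[
\frac{\Leb(U_{nj}^1)}{\Leb(U_{nj}^{L-1})}\le\frac{C_1C_2^2}{(L-1)^{d_u}},\qquad \frac{\Leb(U_{nj}^2\setminus U_{nj}^1)}{\Leb(U_{nj}^{L-1})}\le C_1C_2^2\,\frac{2^{d_u}-1}{(L-1)^{d_u}},
\]
both strictly below $1/4$ by the choice of $L$ in \eqref{eq:L}. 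Summing over $j$ and using disjointness of the $U_{nj}^{L-1}\subset A_{n-1}$ then completes (b) and (c).

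The hard part will be verifying cleanly that the distortion bound \eqref{eq:bdd} is usable on the full forward orbit of each $U_{nj}^L$, i.e.\ that $d_M(\phi_jx,\phi_jy)<4\delta_0$ for all $0\le j\le n$ and $x,y\in U_{nj}^L$. The idea will be that $\phi_nU_{nj}^L$ lies inside $\tcD_L$ on a single unstable disk, whose $d$-diameter is controlled via the standing constraint $(L+1)\delta<\delta_0$, and backward contraction along $E^u$ at rate $\lambda$ only decreases the $d$-separations for $j<n$. Everything else then reduces to the annulus computation above (for~(a)) and straightforward bookkeeping (for~(b) and~(c)).
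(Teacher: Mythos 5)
Your proposal is correct and follows essentially the same route as the paper: the same identifications $B_{n-1}\cap A_n=\{t_{n-1}=1\}$, $A_{n-1}\cap\{R=n\}=\bigcup_j U_{nj}^1$ and $A_{n-1}\cap B_n=\bigcup_j(U_{nj}^2\setminus U_{nj}^1)$ via Proposition~\ref{prop:eps}, the same annulus-ratio computation (your $\eta$ is the reciprocal of the paper's $\sup_k D(d_u,\lambda^\alpha,k)$), and the same distortion estimates \eqref{eq:bdd}, \eqref{eq:bddpi}, \eqref{eq:L} summed over collars respectively over the disjoint sets $U_{nj}^{L-1}\subset A_{n-1}$. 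The only point you flag as delicate, the applicability of \eqref{eq:bdd} along the whole orbit, is treated implicitly in the paper (it is part of the choice of $\delta_0$, $\delta_1$, $\delta$), and your sketch via the bounded diameter of $\tcD_L$ together with backward contraction along unstable leaves is the right way to justify it.
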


\begin{proof}
(a)
Let $y\in B_{n-1}$. By Proposition~\ref{prop:eps}, $y\not\in \bigcup_j U_{nj}^{L-1}$  so in particular $y\in Y_n$.
Note that 
$t_n(y)=0$ if and only if $t_{n-1}(y)=1$.
Hence $B_{n-1}\cap A_n=\{t_{n-1}=1\}$.

Now let $Q\subset C_{n-1}(n-k)\subset B_{n-1}$ be a collar ($1\le k\le n$) with outer ring
$Q\cap A_n=Q\cap\{t_{n-1}=1\}$.
Then
$g_{n-k}=\pi\circ\phi_{n-k}$ maps $Q$ homeomorphically onto $\bigcup_{i\ge k}I_i$ and
$Q\cap\{t_{n-1}=1\}$ homeomorphically onto $I_k$. 
Let $d_u=\dim E^u$. By~\eqref{eq:bdd} and~\eqref{eq:bddpi},
\[
\begin{split}
\frac{\Leb(Q)}{\Leb(Q\cap A_n)} & =
\frac{\Leb(Q)}{\Leb(Q\cap\{t_{n-1}=1\})}
\le C_1\frac{\Leb(\phi_{n-k}Q)}{\Leb(\phi_{n-k}(Q\cap\{t_{n-1}=1\}))}
 \\ & \le C_1C_2^2\frac{\Leb(\bigcup_{i\ge k}I_i)}{\Leb(I_k)}
= C_1C_2^2D(d_u,\lambda^\alpha,k)
\end{split}
\]
where $D(d_u,\lambda,k)=\dfrac{(1+\lambda^{k-1})^{d_u}-1}{(1+\lambda^{k-1})^{d_u}-(1+\lambda^k)^{d_u}}$. Since $\lim_{k\to\infty}D(d_u,\lambda,k)=(1-\lambda)^{-1}$, we obtain
that
$\Leb(Q)\le C_1C_2^2D\Leb(Q\cap A_n)$ where 
$D=\sup_{k\ge1}D(d_u,\lambda^\alpha,k)$
is a constant depending only on $d_u$ and $\lambda^\alpha$. Summing over collars $Q$, it follows that
$\Leb (B_{n-1})\le C_1C_2^2D \Leb(B_{n-1}\cap A_n)$.
\\[.75ex]
(b)
By Proposition~\ref{prop:eps},
$U_{nj}^2\subset U_{nj}^{L-1}\subset A_{n-1}$ for each $j$.
It follows that
$A_{n-1}\cap B_n=\bigcup_j U_{nj}^2\setminus U_{nj}^1$.
By~\eqref{eq:bdd},~\eqref{eq:bddpi} and~\eqref{eq:L},
\[
\frac{\Leb(U_{nj}^2\setminus U_{nj}^1)}
{\Leb(U_{nj}^{L-1})}\le C_1 C_2^2
\frac{\Leb(\cD_2\setminus \cD_1)}
{\Leb(\cD_{L-1})}=C_1C_2^2\frac{2^{d_u}-1}{(L-1)^{d_u}}<\frac14.
\]
Hence
\[
\frac{\Leb(A_{n-1}\cap B_n)}{\Leb(A_{n-1})}\le 
\frac{\sum_j\Leb(U_{nj}^2\setminus U_{nj}^1)}
{\sum_j\Leb(U_{nj}^{L-1})}
<\frac14.
\]
(c)
Proceeding as in part~(b) with $U_{nj}^2\setminus U_{nj}^1$ replaced by $U_{nj}^1$, leads to the estimate
\[
\frac{\Leb(A_{n-1}\cap \{R=n\})}{\Leb(A_{n-1})}\le 
\frac{\sum_j\Leb(U_{nj}^1)}
{\sum_j\Leb(U_{nj}^{L-1})}\le \frac{C_1C_2^2}{(L-1)^{d_u}}<\frac14.
\]

\vspace{-5ex}
\end{proof}

\begin{cor} \label{cor:facts}
For all $n\ge1$,
\begin{itemize}
\item[(a)] $\Leb(A_{n-1}\cap A_n)\ge \tfrac12\Leb(A_{n-1})$.
\item[(b)] $\Leb(B_{n-1}\cap B_n)\le (1-a_1)\Leb(B_{n-1})$.
\item[(c)] $\Leb(B_n)\le \frac14\Leb(A_{n-1})+(1-a_1)\Leb(B_{n-1})$.
\item[(d)] $\Leb(A_n)\ge \frac12\Leb(A_{n-1})+a_1\Leb(B_{n-1})$.
\end{itemize}
\end{cor}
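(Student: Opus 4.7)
The proof is essentially bookkeeping, combining Lemma~\ref{lem:facts} with two disjoint decompositions that come from the inductive construction.

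The plan is to first record the structural facts about how $Y_{n-1}$ partitions at step $n$. Since $Y_n=Y_{n-1}\setminus\{R=n\}=A_n\,\dot\cup\,B_n$, we have the disjoint decomposition
\[
A_{n-1}=(A_{n-1}\cap A_n)\,\dot\cup\,(A_{n-1}\cap B_n)\,\dot\cup\,(A_{n-1}\cap\{R=n\}).
\]
Moreover, $\{R=n\}=\bigcup_j U_{nj}^1\subset\bigcup_j U_{nj}^{L-1}$, and by Proposition~\ref{prop:eps} this last set is contained in $A_{n-1}$. Consequently $B_{n-1}$ is disjoint from $\{R=n\}$, so $B_{n-1}\subset Y_n$ and
\[
B_{n-1}=(B_{n-1}\cap A_n)\,\dot\cup\,(B_{n-1}\cap B_n).
\]

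Next I would deduce the four estimates in order. Part~(a) follows from the first decomposition together with parts~(b) and~(c) of Lemma~\ref{lem:facts}:
\[
\Leb(A_{n-1}\cap A_n)=\Leb(A_{n-1})-\Leb(A_{n-1}\cap B_n)-\Leb(A_{n-1}\cap\{R=n\})\ge\tfrac12\Leb(A_{n-1}).
\]
Part~(b) follows from the second decomposition and part~(a) of Lemma~\ref{lem:facts}:
\[
\Leb(B_{n-1}\cap B_n)=\Leb(B_{n-1})-\Leb(B_{n-1}\cap A_n)\le(1-a_1)\Leb(B_{n-1}).
\]
Finally, since $B_n$ and $A_n$ both lie in $Y_{n-1}=A_{n-1}\,\dot\cup\,B_{n-1}$, splitting along this partition yields
\[
\Leb(B_n)=\Leb(A_{n-1}\cap B_n)+\Leb(B_{n-1}\cap B_n)
\]
and
\[
\Leb(A_n)=\Leb(A_{n-1}\cap A_n)+\Leb(B_{n-1}\cap A_n),
\]
and parts~(c) and~(d) now follow by inserting the bounds already obtained (Lemma~\ref{lem:facts}(b) and Corollary~\ref{cor:facts}(b) for (c); Corollary~\ref{cor:facts}(a) and Lemma~\ref{lem:facts}(a) for (d)).

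There is no real obstacle here: the content lies entirely in Lemma~\ref{lem:facts} and Proposition~\ref{prop:eps}. The only point worth flagging is the use of Proposition~\ref{prop:eps} to ensure that $B_{n-1}$ never loses mass to $\{R=n\}$; without this, the decomposition of $B_{n-1}$ would acquire a third term and the clean bound~(b) would be weakened.
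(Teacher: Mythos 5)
Your proposal is correct and follows essentially the same route as the paper: decompose $A_{n-1}$ and $B_{n-1}$ along $Y_{n-1}=A_n\,\dot\cup\,B_n\,\dot\cup\,\{R=n\}$ for (a),(b), then decompose $A_n$ and $B_n$ along $Y_{n-1}=A_{n-1}\,\dot\cup\,B_{n-1}$ for (c),(d), feeding in Lemma~\ref{lem:facts}. The only cosmetic difference is that for (b) you invoke Proposition~\ref{prop:eps} to show $B_{n-1}\cap\{R=n\}=\emptyset$, whereas the paper simply drops this nonnegative term (the disjointness is in any case already implicit in the proof of Lemma~\ref{lem:facts}(a)); both are fine.
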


\begin{proof} 
Recall that $A_{n-1}\subset Y_{n-1}=
Y_n\,\dot\cup\,\{R=n\} =A_n\,\dot\cup\, B_n\,\dot\cup\,\{R=n\}$.
Hence by Lemma~\ref{lem:facts}(b,c),
\[
\begin{split}
\Leb(A_{n-1}) & =\Leb(A_{n-1}\cap A_n)
+\Leb(A_{n-1}\cap B_n)
+\Leb(A_{n-1}\cap \{R=n\})
\\ & \le \Leb(A_{n-1}\cap A_n)+\tfrac12\Leb(A_{n-1}),
\end{split}
\]
proving (a).
Similarly, 
by Lemma~\ref{lem:facts}(a),
\[
\begin{split}
\Leb(B_{n-1}) & =\Leb(B_{n-1}\cap A_n)
+\Leb(B_{n-1}\cap B_n)
+\Leb(B_{n-1}\cap \{R=n\})
\\ & \ge a_1\Leb(B_{n-1})+\Leb(B_{n-1}\cap B_n),
\end{split}
\]
proving (b).

Next, recall that 
$B_n=B_n\cap Y_{n-1}= B_n\cap\big(A_{n-1}\,\dot\cup\, B_{n-1}\big)$.
Hence part~(c) follows from Lemma~\ref{lem:facts}(b) and
part~(b).
Similarly, $A_n=A_n\cap\big(A_{n-1}\,\dot\cup\, B_{n-1}\big)$ and part~(d) follows from
Lemma~\ref{lem:facts}(a) and part~(a).
\end{proof}

\begin{cor} \label{cor:facts2}
There exists $a_0>0$ such that
\(
\Leb(B_n)\le a_0\Leb(A_n)
\)
for all $n\ge0$.
\end{cor}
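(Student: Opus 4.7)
The plan is to argue by induction on $n$, using the coupled recursive estimates in Corollary~\ref{cor:facts}(c,d). Set $\alpha_n=\Leb(A_n)$ and $\beta_n=\Leb(B_n)$. The base case $n=0$ is immediate: since $t_0\equiv 0$ on $Y_0=Y$, we have $B_0=\emptyset$, so $\beta_0=0$ and any positive choice of $a_0$ satisfies $\beta_0\le a_0\alpha_0$.

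For the inductive step, assuming $\beta_{n-1}\le a_0\alpha_{n-1}$, I would use
\[
\beta_n\le \tfrac14\alpha_{n-1}+(1-a_1)\beta_{n-1}
\quad\text{and}\quad
\alpha_n\ge \tfrac12\alpha_{n-1}+a_1\beta_{n-1}
\]
and seek $a_0$ for which $\tfrac14\alpha_{n-1}+(1-a_1)\beta_{n-1}\le a_0\bigl(\tfrac12\alpha_{n-1}+a_1\beta_{n-1}\bigr)$ holds identically. Matching coefficients of $\alpha_{n-1}$ and $\beta_{n-1}$ separately gives the two conditions $a_0\ge \tfrac12$ and $a_0a_1\ge 1-a_1$, so I would take
\[
a_0=\max\Bigl\{\tfrac12,\,\tfrac{1-a_1}{a_1}\Bigr\}.
\]

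There is no substantive obstacle: the argument is a one-step comparison inside an elementary induction on a two-term linear recursion. The only point meriting attention is the base case, where one must observe that $B_0$ is empty (so that the induction starts with no constraint on $a_0$); once this is noted, the value of $a_0$ is dictated entirely by the constant $a_1$ supplied by Lemma~\ref{lem:facts}(a).
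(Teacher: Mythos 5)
Your proof is correct, and it is in fact more direct than the paper's. The paper also argues by induction from Corollary~\ref{cor:facts}, but it splits the inductive step into two cases according to whether $\Leb(B_{n-1})>\frac{1}{2a_1}\Leb(A_{n-1})$ or not, uses parts (a), (c) and (d) together with the induction hypothesis, and ends up with the constant $a_0=\frac{2+a_1}{2a_1}$. Your observation that one can simply dominate the recursion coefficientwise, i.e.\ that $\tfrac14\alpha+(1-a_1)\beta\le a_0\bigl(\tfrac12\alpha+a_1\beta\bigr)$ holds for all $\alpha,\beta\ge0$ once $a_0\ge\tfrac12$ and $a_0a_1\ge 1-a_1$, makes the ``inductive step'' a one-step deduction from Corollary~\ref{cor:facts}(c,d) alone: for $n\ge1$ the bound $\Leb(B_n)\le a_0\Leb(A_n)$ follows directly, and the induction hypothesis you state is never actually used (only the base case $B_0=\emptyset$ is needed to cover $n=0$). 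So your argument is both shorter and yields a slightly better constant $a_0=\max\{\tfrac12,\tfrac{1-a_1}{a_1}\}$; the paper's case analysis buys nothing extra here beyond avoiding the (easy) observation that the linear forms can be compared term by term. One cosmetic point: since the step does not depend on the hypothesis, you could drop the induction framing entirely and state the argument for $n\ge1$ directly, treating $n=0$ separately.
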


\begin{proof} 
Let \(
a_0=\dfrac{2+a_1}{2a_1}.
\)
We prove the result by induction.
The case $n=0$ is trivial since $B_0=\emptyset$.
For the induction step from $n-1$ to $n$,
we consider separately the cases $\Leb(B_{n-1})>\frac{1}{2a_1}\Leb(A_{n-1})$
and $\Leb(B_{n-1})\le \frac{1}{2a_1}\Leb(A_{n-1})$.

Suppose first that $\Leb(B_{n-1})>\frac{1}{2a_1}\Leb(A_{n-1})$.
By Corollary~\ref{cor:facts}(c),
\[
\Leb(B_n)
< \big\{\tfrac12 a_1+(1-a_1)\big\}\Leb(B_{n-1})
= (1-\tfrac12 a_1)\Leb(B_{n-1})
< \Leb(B_{n-1}).
\]
By Corollary~\ref{cor:facts}(d),
\[
\Leb(A_n)> (\tfrac12+a_1\tfrac{1}{2a_1})\Leb(A_{n-1})=\Leb(A_{n-1}).
\]
Hence by the induction hypothesis,
\[
\Leb(B_n)<\Leb(B_{n-1})\le a_0\Leb(A_{n-1})<a_0\Leb(A_n),
\]
establishing the result at time $n$.

Finally, suppose that $\Leb(B_{n-1})\le\frac{1}{2a_1}\Leb(A_{n-1})$.
By Corollary~\ref{cor:facts}(a,c),
\[
\begin{split}
\Leb(B_n) & \le \tfrac14\Leb(A_{n-1})+\Leb(B_{n-1})
\le (\tfrac14+\tfrac{1}{2a_1})\Leb(A_{n-1})
\\ & \le (\tfrac12+\tfrac{1}{a_1})\Leb(A_n) =a_0 \Leb(A_n),
\end{split}
\]
completing the proof.
\end{proof}

\begin{lemma} \label{lem:keyfact}
Let $\eps\in(0,\frac12\delta_0)$ be small as in Propositions~\ref{prop:eps} and~\ref{prop:eps2}.
There exist $c_1>0$ and $N\ge1$ such that 
\[
\Leb\Big(\bigcup_{i=0}^{N} \{R=n+i\}\Big)\ge c_1 \Leb(A_{n-1})
\quad\text{for all $n\ge1$}.
\]
\end{lemma}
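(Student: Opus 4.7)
The plan is to show that every point $x\in A_{n-1}$ sits at the centre of a controlled unstable neighbourhood in which a uniformly positive fraction of points has $R\le n+N$, and then to piece these neighbourhoods together via a Vitali-type covering on $Y$. First I would fix $N_2\ge1$ large enough that $\lambda^{-N_2}\eps$ exceeds a threshold $R_0=R_0(\delta,L,C_4)$ chosen so that any unstable disk of radius at least $R_0$, centered within $d_M$-distance $\delta$ of some $\phi_{-j}p$ with $1\le j\le N_1$, contains (via the local product structure set up before Proposition~\ref{prop:eps}) an unstable subdisk that is mapped homeomorphically onto $\cD_L$ by $g_j$. Then set $N=N_1+N_2$.

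Second, for $x\in A_{n-1}$ set $D_x=\phi_{-n}W^u_\eps(\phi_n x)$, which by Proposition~\ref{prop:eps2}(b) lies inside $A_{n-1}^{(\eps)}\subset Y_{n-1}$. Tracking forward, hyperbolic expansion produces a least $0\le m\le N_2$ at which the unstable radius of $\phi_{n+m}D_x$ exceeds $R_0$. By $\delta$-density of $\bigcup_{i=1}^{N_1}\phi_{-i}p$ in $\Lambda$, there is $1\le j\le N_1$ with $d_M(\phi_{n+m}x,\phi_{-j}p)<\delta$. The previous paragraph then produces a subdisk $D_x'\subset D_x$ for which $\phi_{n+m+j}D_x'\subset \tcD_L$ and $g_{n+m+j}$ maps $D_x'$ homeomorphically onto $\cD_L$. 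A variant of the argument in Proposition~\ref{prop:eps} (using Proposition~\ref{prop:eps2}(b) at the intermediate times) shows that either $D_x'$ lies in $A^{(\eps)}_{n+m+j-1}$, placing it inside some component $U^L_{n+m+j,\cdot}$, or sub-pieces of $D_x'$ are absorbed into earlier return components $U^1_{k,\cdot}$ with $k<n+m+j$; in either case every relevant point has $R\le n+N$. The bounded distortion estimates~\eqref{eq:bdd}--\eqref{eq:bddpi} then give
\[
\Leb\big(\{y\in D_x:R(y)\le n+N\}\big)\ge c_0\,\Leb(D_x)
\]
for a uniform $c_0>0$ depending only on $C_1$, $C_2$ and the ratio $\Leb(\cD_1)/\Leb(\cD_L)$.

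Third, a Vitali-type covering argument on the smooth unstable disk $Y$ extracts from $\{D_x\}_{x\in A_{n-1}}$ a disjoint subfamily whose union covers a fixed positive fraction of $A_{n-1}$; multiplying by $c_0$ yields the desired $c_1>0$. The main obstacle is the bookkeeping at the intermediate times $k=n+1,\ldots,n+m+j-1$: one must ensure that the candidate return piece $D_x'$ either survives intact as a valid $U^L_{n+m+j,\cdot}$ component or that any earlier consumption is correctly attributed to $\bigcup_{i=0}^N\{R=n+i\}$, with no portion double-counted or lost. The collar picture from the visualisation of $B_n$, together with the geometric control afforded by Propositions~\ref{prop:eps} and~\ref{prop:eps2}, is what makes this accounting possible, but the inductive verification that $D_x'$ meets the regularity required to be (or to be absorbed into) a legitimate $U^L_{\cdot,\cdot}$ component is the delicate point.
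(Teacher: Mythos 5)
Your overall strategy is the paper's: use expansion together with the $\delta$-density of $\bigcup_{i\le N_1}\phi_{-i}p$ and the local product structure to manufacture, within $N=N_1+N_2$ steps of each point of $A_{n-1}$, a full branch over $\cD_L$, and then conclude by a covering plus bounded distortion argument. However, there is a genuine gap in your second step, namely the claimed uniform estimate $\Leb\big(\{y\in D_x:R(y)\le n+N\}\big)\ge c_0\,\Leb(D_x)$ for the $\eps$-scale disk $D_x=\phi_{-n}W^u_\eps(\phi_nx)$. The dichotomy you invoke is not exhaustive, and the conclusion ``in either case every relevant point has $R\le n+N$'' is false: when $D_x$ (or $D_x'$) fails to stay inside the sets $A_{n+i}$ at an intermediate time, the points that leave do so by entering sets of the form $U^2_{n+i,j}$, and those landing in $U^2_{n+i,j}\setminus U^1_{n+i,j}$ are absorbed into \emph{collars} (the sets with $t>0$, i.e.\ $B$), not into return components; their return time is not bounded by $n+N$ at all — controlling it is precisely what the tail estimate is for. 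Moreover, when $D_x$ merely grazes some $U^2_{n+i,j}$, the actual return piece $U^1_{n+i,j}$ may lie largely or entirely outside $D_x$, so no definite fraction of $D_x$ itself need return within the window; your sketch gives no mechanism to rule this out, and iterating on the surviving part of $D_x$ does not work because after the first leak it is no longer a full pulled-back unstable ball, so the covering claim (*) cannot be reapplied to it.

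The paper's proof avoids exactly this by never asserting a fixed proportion of the $\eps$-disk returns. Instead, in the ``leak'' case it uses the contact point $a_2\in W^u_\eps(\phi_nx)\cap\phi_nU^2_{n+i,j}$ together with the local product structure and the smallness conditions $C_3(3\delta)^\alpha<\tfrac12\delta_0$, $\eps<\tfrac12\delta_0$ to show that the \emph{entire} component $U^1_{n+i,j}$ is contained in the larger pulled-back disk $\phi_{-n}W^u_{\delta_0}(\phi_nx)$; the measure comparison (via \eqref{eq:bdd}, \eqref{eq:bddpi}, and a factor $|D\phi_1|_\infty^{N\dim E^u}$ accounting for the time shift from $n$ to $n+i$) is then between $\Leb(U^1_{n+i,j})$ and $\Leb(\phi_{-n}W^u_{\delta_0}(z))$, and the final covering is by $\delta_0$-balls with disjoint half-radius balls, each containing its own return component. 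To repair your argument you would need to replace $D_x$ by the $\delta_0$-scale pulled-back disk in both the fraction estimate and the Vitali step, and replace the ``absorbed into earlier return components'' alternative by the capture-of-a-whole-component argument just described.
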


\begin{proof}
Fix $\lambda\in(0,1)$, $L>1$, $0<\delta<\delta_1<\delta_0$ and $N_1\ge1$ as defined from the outset.
Recall that $C_3(3\delta)^\alpha<\frac12\delta_0$ and $C_4(L+1)\delta<\delta_0$.
Choose $N_2\ge1$ such that $\lambda^{N_2}<\eps/\delta_0$ and take $N=N_1+N_2$.

We claim that
\begin{itemize}
\item[(*)]
For all $z\in \Lambda$, there exists $i\in\{1,\dots,N_1\}$ such that
$\pi (\phi_{i+N_2} W^u_\eps(z)\cap \tcD_L)\supset \cD_L$.
\end{itemize}

Fix $z\in \Lambda$.
By the definition of $N_1$, there exists $1\le i\le N_1$ such that $d_M(\phi_{-i}p,\phi_{N_2}z)<\delta$.
Let $y\in \cD_L$. Then
\[
d_M(\phi_{-i}y,\phi_{N_2}z)\le 
d(\phi_{-i}y,\phi_{-i}p)+d_M(\phi_{-i}p,\phi_{N_2}z)\le d(y,p)+d_M(\phi_{-i}p,\phi_{N_2}z)<(L+1)\delta.
\]
Using the local product structure and choice of $\delta$, we can
define $x\in W^{cs}_{\delta_1}(\phi_{-i}y)\cap W^u_{\delta_1}(\phi_{N_2}z)$.
Then $\phi_ix\in W^{cs}_{\delta_1}(y)\subset\tcD_L$ and
$g_ix=\pi \phi_ix=y$.
Also,
\[
d(x, \phi_{N_2}z)\le C_4d_M(\phi_{-i}y,\phi_{N_2}z)
<C_4(L+1)\delta<\delta_0.
\]
By the definition of $N_2$,
\[
\phi_ix\in \phi_i W^u_{\delta_0}(\phi_{N_2}z)
\subset \phi_{i+N_2} W^u_\eps(z).
\]
Hence we obtain that
$y=\pi \phi_ix\in \pi(\phi_{i+N_2}W^u_\eps(z)\cap\hcD_L)$
proving (*).

Next, we claim that 
\begin{itemize}
\item[(**)] 
For all $z\in \phi_nA_{n-1}$, $n\ge1$, there 
exist $i\in\{0,\dots,N\}$ and $j$ such that $U_{n+i,j}^1\subset \phi_{-n}W^u_{\delta_0}(z)$.
\end{itemize}

To prove (**), define $V_\eps=\phi_{-n}W^u_\eps(z)$.
By Proposition~\ref{prop:eps2}(b),
$V_\eps\subset A_{n-1}^{(\eps)}$.
We now consider two possible cases.

Suppose first that $V_\eps\subset A_{n+i}$ for all $0\le i\le N$.
By claim (*), there exists $1\le i\le N=N_1+N_2$ such that 
\[
\pi(\phi_{n+i}V_\eps\cap \tcD_L)=\pi(\phi_iW^u_\eps(z)\cap \tcD_L)\supset \cD_L,
\]
while $V_\eps\subset A_{n+i-1}$ by assumption.
This means that $V_\eps\supset U_{n+i,j}^L$ for some $j$. Hence
\[
U_{n+i,j}^1\subset U_{n+i,j}^L\subset V_\eps\subset  \phi_{-n}W^u_{\delta_0}(z),
\]
  and we are done.

In this way, we reduce to the second case where there exists $0\le i\le N$ least such that
$V_\eps\not\subset A_{n+i}$.
Since $i$ is least, 
$V_\eps\subset A_{n+i-1}^{(\eps)}$. (The $\eps$ is required in case $i=0$.)
By Proposition~\ref{prop:eps2}(a),
$V_\eps\subset\{t_{n+i-1}\le 1\}$.
Hence
\[
\begin{split}
V_\eps\setminus A_{n+i}
& =(V_\eps\cap B_{n+i})\,\cup\,(V_\eps\cap \{R=n+i\})
\\ & \subset\{t_{n+i-1}\le 1,\,t_{n+i}\ge1\}\,\cup\, \{R=n+i\}
\subset\bigcup_j U_{n+i,j}^2.
\end{split}
\]
Since $V_\eps\setminus A_{n+i}\neq\emptyset$, 
this means that there exists $j$ so that
$V_\eps$ intersects $U_{n+i,j}^2$.
Hence we can choose $a_2\in W^u_\eps(z)\cap \phi_n U_{n+i,j}^2$.

Recall that $\phi_{n+i}U_{n+i,j}^m \subset \tcD_m$ 
and $g_{n+i}U_{n+i,j}^m = \cD_m$ for $m=1,2$.
In particular, $b_2=\phi_ia_2\in \tcD_2$ and $c_2=g_ia_2\in \cD_2$.

Let $c_1\in\cD_1$.
Then $d_M(c_1,b_2)\le d_M(c_1,c_2)+d_M(c_2,b_2)<3\delta+\delta_1<4\delta_1$. Hence,
using the local product structure and definition of $\delta_1$,  we can define $b_1\in W^{cs}_{\delta_0}(c_1)\cap W^u_{\delta_0}(b_2)$ and $a_1=\phi_{-i}b_1$.
Note that
\[
\phi_ia_r=b_r,\qquad \pi b_r=c_r, \quad r=1,2.
\]
Hence
\[
d(a_1,a_2)\le d(b_1,b_2)\le C_3d(c_1,c_2)^\alpha<C_3(3\delta)^\alpha<\tfrac12\delta_0,
\]
and so $d(a_1,z)\le d(a_1,a_2)+d(a_2,z)<\frac12\delta_0+\eps<\delta_0$.
It follows that
$a_1\in W^u_{\delta_0}(z)$ and thereby that
$c_1\in g_i(W^u_{\delta_0}(z)\cap \phi_{-i}\hcD_1)$.
This proves that $\cD_1\subset g_i(W^u_{\delta_0}(z)\cap \phi_{-i}\hcD_1)$.
Hence $U_{n+i,j}^1\subset g_{-(n+i)}\cD_1\subset \phi_{-n}W^u_{\delta_0}(z)$ verifying claim~(**).

 \vspace{1ex}
We are now in a position to complete the proof of the lemma.
Let $n\ge1$, and let $Z\subset \phi_nA_{n-1}$ be a maximal set of points such that
the balls $W^u_{\delta_0/2}(z)$ are disjoint for $z\in Z$.
If $x\in \phi_nA_{n-1}$, then $W^u_{\delta_0/2}(x)$ intersects at least one
$W^u_{\delta_0/2}(z)$, $z\in Z$, by maximality of the set $Z$. Hence
$\phi_nA_{n-1}\subset \bigcup_{z\in Z}W^u_{\delta_0}(z)$.
It follows that
\[
A_{n-1}\subset \bigcup_{z\in Z}\phi_{-n}W^u_{\delta_0}(z).
\]

Let $z\in Z$ and let $U_z=U_{n+i,j}^1$ be as in claim~(**).
In particular, $g_{n+i}U_z=\cD_1=W^u_\delta(p)$.
Also, $\Leb(\phi_{n+i}U_z)\le 
|D\phi_1|_\infty^{im}\Leb(\phi_nU_z)$
where $m=\dim E^u$.
Hence, by~\eqref{eq:bddpi}, 
\[
\frac{1}{\Leb(\phi_nU_z)}\le |D\phi_1|_\infty^{Nm}\frac{1}{\Leb(\phi_{n+i}U_z)}
\le C_3 |D\phi_1|_\infty^{Nm}\frac{1}{\Leb(W^u_\delta(p))}.
\]
By~\eqref{eq:bdd}, 
\[
\frac{\Leb(\phi_{-n}W^u_{\delta_0}(z))}{\Leb(U_z)}
\le C_1
\frac{\Leb(W^u_{\delta_0}(z))}{\Leb(\phi_nU_z)}
\le 
K,
\]
where $K=C_1C_3|D\phi_1|_\infty^{Nm}\,\dfrac{\sup_{y\in Y}\Leb(W^u_{\delta_0}(y))}{\Leb(W^u_\delta(p))}$.

Finally, the sets $U_z$ are connected components of $\bigcup_{0\le i\le N}\{R=n+i\}$ lying in distinct disjoint sets $\phi_{-n}W^u_{\delta_0}(z)$.
Hence 
\[
\Leb(A_{n-1})  
 \le \sum_{z\in Z} \Leb(\phi_{-n}W^u_{\delta_0}(z))
 \le K \sum_{z\in Z}\Leb(U_z) \le 
K\Leb\Big(\bigcup_{0\le i\le N}\{R=n+i\}\Big),
\]
as required.
\end{proof}

We can now complete the proof of Theorem~\ref{thm:induce}.

\begin{cor}  \label{cor:tails}
$\Leb(R>n)=O(\gamma^n)$ for some $\gamma\in(0,1)$.
\end{cor}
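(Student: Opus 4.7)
The plan is to chain Corollary~\ref{cor:facts2} and Lemma~\ref{lem:keyfact} into a geometric recurrence for $\Leb(Y_n)=\Leb(R>n)$ and then iterate.

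First, since $Y_{n-1}=A_{n-1}\,\dot\cup\,B_{n-1}$, Corollary~\ref{cor:facts2} gives
\[
\Leb(Y_{n-1})\le (1+a_0)\,\Leb(A_{n-1}).
\]
Second, the sets $\{R=n+i\}$ for $0\le i\le N$ partition $Y_{n-1}\setminus Y_{n+N}$, so Lemma~\ref{lem:keyfact} rephrases as
\[
\Leb(Y_{n-1})-\Leb(Y_{n+N})\ge c_1\,\Leb(A_{n-1}).
\]
Setting $K=(1+a_0)/c_1$, which we may assume to be $\ge 1$ (otherwise the right-hand side forces $\Leb(Y_{n-1})=0$ already), these two inequalities combine to
\[
\Leb(Y_{n+N})\le \theta\,\Leb(Y_{n-1}), \qquad \theta = 1-K^{-1}\in[0,1).
\]

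Iterating in steps of $N+1$, starting from $Y_0=Y$, yields $\Leb(Y_{k(N+1)})\le \theta^k\Leb(Y)$. Using monotonicity $Y_n\supset Y_{n+1}$ and writing $n=k(N+1)+r$ with $0\le r<N+1$ then gives $\Leb(R>n)\le C\gamma^n$ with $\gamma=\theta^{1/(N+1)}\in(0,1)$ and $C=\Leb(Y)/\theta$.

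There is no real obstacle at this final step: the two preparatory results decouple the horizontal escape mechanism (a definite fraction of $A_{n-1}$ lands in some $\{R=n+i\}$ with $0\le i\le N$) from the vertical comparison ($B_n\lesssim A_n$), and the combination reduces to elementary bookkeeping plus a geometric iteration. All the genuine work was carried out earlier in Propositions~\ref{prop:eps}--\ref{prop:eps2}, Lemma~\ref{lem:facts}, and especially Lemma~\ref{lem:keyfact}, where the recurrence property of $p$ and the uniform expansion along unstable disks of size $\eps$ up to Lyapunov time $N$ were used to produce the full-branch return set.
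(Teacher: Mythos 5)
Your proposal is correct and follows essentially the same route as the paper: combine Corollary~\ref{cor:facts2} with Lemma~\ref{lem:keyfact} to obtain the contraction $\Leb(Y_{n+N})\le(1-d_2^{-1})\Leb(Y_{n-1})$ with $d_2=c_1^{-1}(1+a_0)$, then iterate geometrically. Your bookkeeping in steps of $N+1$ (rather than the paper's $N$) is only a cosmetic difference absorbed into the constants.
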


\begin{proof}
By Corollary~\ref{cor:facts2} and Lemma~\ref{lem:keyfact},
\[
\begin{split}
\Leb(R\ge n) & =\Leb(A_{n-1})+\Leb(B_{n-1})
\\ & \le (1+a_0)\Leb(A_{n-1})
\le d_2\Leb\Big(\bigcup_{i=0}^{N} \{R=n+i\}\Big)
\end{split}
\]
where $d_2=c_1^{-1}(1+a_0)$.
It follows that
\[
\begin{split}
d_2^{-1}\Leb(R\ge n) & \le \Leb(R=n)+\dots +\Leb(R=n+N)
\\ & =\Leb(R\ge n)-\Leb(R> n+N).
\end{split}
\]
Hence
\[
\Leb(R>n+N)\le (1-d_2^{-1})\Leb(R\ge n).
\]
In particular, $\Leb(R>kN)\le \gamma^{kN}$ with $\gamma=(1-d_2^{-1})^{1/N}$ and the result follows.
\end{proof}

\section{Exponential decay of correlations for flows}
\label{sec:expdecay}

In this section, we consider exponential decay of correlations for a class of uniformly hyperbolic skew product flows satisfying a uniform nonintegrability condition, generalising from $C^2$ flows as treated in~\cite{AGY06} to $C^{1+\alpha}$ flows. In doing so, we remove the restriction in~\cite{BaladiVallee05,AraujoM16} that unstable manifolds are one-dimensional.

The arguments are a straightforward combination of those 
in~\cite{AraujoM16,AGY06}. We follow closely the presentation in~\cite{AraujoM16}, with the focus on incorporating the ideas from~\cite{AGY06} where required.

Quotienting by stable leaves leads to a class of semiflows considered in 
Subsection~\ref{sec:semiflow}. The flows are considered in
Subsection~\ref{sec:flow}. 

The current section is completely independent from Section~\ref{sec:induce}, so overlaps in notation will not cause any confusion.

\subsection{$C^{1+\alpha}$ uniformly expanding semiflows}
\label{sec:semiflow}

Fix $\alpha\in(0,1)$. Let $Y\subset\R^m$ be an open ball\footnote{More generally, we could consider a John domain as in~\cite{AGY06} but the current setting suffices for our purposes.} in Euclidean space with Euclidean distance $d$. We suppose that $\diam Y=1$. Let $\Leb$ denote Lebesgue measure on $Y$.  Let $\cP$ be a countable partition $\bmod\, 0$ of $Y$ consisting of open sets. 

Suppose that $F:\bigcup_{U\in\cP}U\to Y$ is $C^{1+\alpha}$ on each $U\in\cP$ and maps $U$ diffeomorphically onto $Y$.
Let $\cH=\{h:U\to Y:U\in\cP\}$ denote the family of inverse branches, and let $\cH_n$  denote the inverse branches for $F^n$.
We say that $F$ is a \emph{$C^{1+\alpha}$ uniformly expanding map} if there exist constants $C_1\ge1$, $\rho_0\in(0,1)$ such that 
\begin{itemize}
\item[(i)] $|Dh|_\infty\le C_1\rho_0^n$ for all $h\in\cH_n$, $n\ge1$;
\item[(ii)] $|\log|\det Dh|\,|_\alpha\le C_1$ for all $h\in\cH$;
\end{itemize}
where $|\psi|_\alpha=\sup_{y\neq y'}|\psi(y)-\psi(y')|/d(y,y')^\alpha$.
Under these assumptions, it is standard~\cite{Aaronson} that there exists a unique $F$-invariant absolutely continuous measure $\mu$. The density $d\mu/d\Leb$ is $C^\alpha$, bounded above and below, and $\mu$ is ergodic and mixing.

We consider roof functions $r:\bigcup_{U\in\cP}U\to\R^+$  that are $C^1$ on partition elements $U$ with $\inf r>0$. 
Define the suspension $Y^r=\{(y,u)\in Y\times\R:0\le u\le r(y)\}/\sim$ where 
$(y,r(y))\sim(Fy,0)$. The suspension semiflow $F_t:Y^r\to Y^r$ is given by
$F_t(y,u)=(y,u+t)$ computed modulo identifications, with ergodic invariant probability measure $\mu^r=(\mu\times{\rm Lebesgue})/\bar r$ where $\bar r=\int_Yr\,d\mu$.
We say that $F_t$ is a \emph{$C^{1+\alpha}$ uniformly expanding semiflow} if 
$F$ is a $C^{1+\alpha}$ uniformly expanding map and 
we can choose $C_1$ from condition~(i)  and $\eps>0$ such that
\begin{itemize}
\item[(iii)] $|D(r\circ h)|_\infty \le C_1$ for all $h\in\cH$;
\item[(iv)] $\sum_{h\in\cH} e^{\eps|r\circ h|_\infty}|\det Dh|_\infty <\infty$.
\end{itemize}

Let $r_n=\sum_{j=0}^{n-1}r\circ F^j$ and define
\[
\psi_{h_1,h_2}=r_n\circ h_1-r_n\circ h_2 : Y\to\R,
\]
for $h_1,h_2\in\cH_n$. We require the following \emph{uniform nonintegrability} condition~\cite[Equation~(6.6)]{AGY06}:
\begin{itemize}
\item[(UNI)] There exists $E>0$ and $h_1,h_2\in \cH_{n_0}$, for some sufficiently large $n_0\ge1$, with the following property: There exists a continuous unit vector field $\ell:\R^m\to\R^m$ such that $|D\psi_{h_1,h_2}(y)\cdot\ell(y)|\ge E$ for all $y\in Y$.
\end{itemize}
(The requirement ``sufficiently large'' can be made explicit as in~\cite[Equations~(2.1) to~(2.3)]{AraujoM16}.)
From now on, $n_0$, $h_1$ and $h_2$ are fixed.

Define $F_\alpha(Y^r)$ to consist of $L^\infty$ functions
$v:Y^r\to \R$ such that
$\|v\|_\alpha=|v|_\infty+|v|_\alpha<\infty$ where
\[
|v|_\alpha=\sup_{(y,u)\neq(y',u)}\frac{|v(y,u)-v(y',u)|}{d(y,y')^\alpha}.
\]
Define $F_{\alpha,k}(Y^r)$ to consist of functions
with $\|v\|_{\alpha,k}=\sum_{j=0}^k \|\partial_t^jv\|_\alpha<\infty$ where $\partial_t$ denotes differentiation along the semiflow direction.

We can now state the main result in this section.
Given $v\in L^1(Y^r)$, $w\in L^\infty(Y^r)$, define the correlation function
\[
\rho_{v,w}(t)=\int v\,w\circ F_t\,d\mu^r
-\int v\,d\mu^r \int w\,d\mu^r.
\]

\begin{thm} \label{thm:semiflow}
Suppose that $F_t:Y^r\to Y^r$ is a $C^{1+\alpha}$ uniformly expanding semiflow satisfying (UNI).
Then there exist constants $c,C>0$ such that
\[
|\rho_{v,w}(t)|\le C e^{-c t}\|v\|_{\alpha,1}\|w\|_{\alpha,1},
\]
for all $t>0$ and all $v,\,w\in F_{\alpha,1}(Y^r)$
(alternatively 
all $v\in F_{\alpha,2}(Y^r)$, $w\in L^\infty(Y^r)$).
\end{thm}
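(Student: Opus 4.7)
The plan is to implement a Dolgopyat-style spectral gap argument for a family of twisted transfer operators, combining the higher-dimensional framework of~\cite{AGY06} with the low-regularity adaptations of~\cite{AraujoM16}. The standard route is to reduce exponential decay of correlations for the semiflow $F_t$ to a spectral bound for twisted operators associated to the base map $F$, and then to invoke a contour-shift argument on a Laplace-transform representation of the correlation function to recover exponential mixing.

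More precisely, let $L$ denote the transfer operator of $F$ with respect to $\mu$, and for $s=\sigma+ib\in\C$ define the twisted operator
\[
L_s v(y) = \sum_{h\in\cH} e^{-sr(h(y))}|\det Dh(y)|\,v(h(y))
\]
acting on a H\"older space $\cC^\alpha(Y)$. Condition (iv) yields absolute summability in a vertical strip $|\sigma|<\eps$. Using conditions (i)--(iii) and the invariance of $\mu$, one establishes a Lasota--Yorke-type inequality of the form $\|L_s^n v\|_\alpha\le C\rho^n\|v\|_\alpha+C|b|\,|v|_\infty$, so that the small-$|b|$ regime is controlled by quasi-compactness at $s=0$, itself a consequence of mixing and the spectral gap of $L_0$.

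The heart of the argument, and the main obstacle, is the high-frequency estimate: showing that there exist $\eta>0$, $b_0>0$ and $n_1\in\N$ such that $\|L_s^{n_1}v\|_{L^2(\mu)}\le e^{-\eta n_1}\|v\|_\alpha$ for all $|b|\ge b_0$ and small $|\sigma|$. Here (UNI) is essential: following~\cite{AGY06}, I would choose inverse branches in $\cH_{kn_0}$ built by iterating the fixed pair $h_1,h_2$, so that the oscillatory factors $e^{-ibr_{kn_0}\circ h_i}$ differ by a phase whose derivative in the $\ell$-direction is of order $E|b|$. On a definite $\mu$-fraction of $Y$ this produces pointwise cancellation $|a_1 e^{-ib\psi_1}+a_2 e^{-ib\psi_2}|\le(1-\theta)(a_1+a_2)$ at scale $|b|^{-1}$, which is then iterated into an $L^2$-contraction by a standard Dolgopyat cone/approximation argument. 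The restricted regularity is handled as in~\cite{AraujoM16}: the absence of $C^2$ phases is compensated by working with $C^\alpha$ moduli and estimating the relevant oscillatory integrals against H\"older test functions rather than against smooth bumps; the higher-dimensional aspect reduces to foliating $Y$ into short segments along the continuous unit field $\ell(y)$ and applying a one-dimensional non-stationary-phase estimate on each slice.

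From a spectral bound $\|L_s^n\|_\alpha\le C\rho^n$ uniform for $|\sigma|\le\sigma_0$ and $|b|\ge b_0$, together with the spectral gap of $L_0$, one obtains a holomorphic extension of $(I-L_s)^{-1}$ to a strip $\Re s>-\eta_0$ with at most polynomial growth in $|b|$. Representing $\rho_{v,w}(t)$ via its Laplace transform and the resolvent identity, the time derivative available in $F_{\alpha,1}(Y^r)$ (or the two derivatives in $F_{\alpha,2}(Y^r)$) supplies the factor of $b$ needed to absorb the polynomial growth, and a Paley--Wiener-type contour shift then yields $|\rho_{v,w}(t)|\le Ce^{-ct}\|v\|_{\alpha,1}\|w\|_{\alpha,1}$.
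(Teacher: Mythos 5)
Your proposal follows essentially the same route as the paper: twisted transfer operators $P_s$ with a Lasota--Yorke inequality, a Dolgopyat-type cancellation/$L^2$-contraction argument driven by (UNI) along the vector field $\ell$ (the paper smooths $\ell$ and flows along it, which is the same device as your slicing along $\ell$), yielding $\|P_s^n\|_b\le\gamma^n$ for large $|b|$, and then the standard Laplace-transform/contour-shift argument where the time-regularity of the observables absorbs the polynomial growth in $|b|$ — exactly the scheme of \cite{AraujoM16} combined with \cite{AGY06} that the paper implements.
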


In the remainder of this subsection, we prove Theorem~\ref{thm:semiflow}.

For $s\in\C$, let $P_s$ denote the (non-normalised) transfer operator
\[
P_s=\sum_{h\in\cH}A_{s,h},
\qquad
A_{s,h}v=e^{-sr\circ h}|\det Dh|\,v\circ h .
\]

For $v:Y\to\C$, define $\|v\|_\alpha=\max\{|v|_\infty,|v|_\alpha\}$ where
$|v|_\alpha=\sup_{y\neq y'}|v(y)-v(y')|/d(y,y')^\alpha$.
Let $C^\alpha(Y)$ denote the space of functions $v:Y\to\C$ with $\|v\|_\alpha<\infty$.
We introduce the family of equivalent norms
\[
\|v\|_b=\max\{|v|_\infty, |v|_\alpha/(1+|b|^\alpha)\}, \quad b\in\R.
\]

\begin{prop} \label{prop:P}
Write $s=\sigma+ib$.
There exists $\eps\in(0,1)$ such that
the family $s\mapsto P_s$ of operators on $C^\alpha(Y)$ is continuous on
$\{\sigma>-\eps\}$.
Moreover, $\sup_{|\sigma|<\eps}\|P_s\|_b<\infty$.
\end{prop}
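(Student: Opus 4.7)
The plan is to reduce both assertions to uniform pointwise bounds on each branch operator $A_{s,h}$, summed over $h\in\cH$ via hypothesis~(iv). The sup-norm estimate is immediate: for $\sigma>-\eps$ and $r>0$, we have $|e^{-s r\circ h(y)}|=e^{-\sigma r(h(y))}\le e^{\eps|r\circ h|_\infty}$, so
\[
|A_{s,h}v|_\infty\le e^{\eps|r\circ h|_\infty}|\det Dh|_\infty|v|_\infty,
\]
and summation via (iv) yields $|P_sv|_\infty\le M|v|_\infty$ with $M<\infty$ independent of $s$ in the strip.

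For the H\"older seminorm, I would write $A_{s,h}v=a\cdot b\cdot c$ with $a=e^{-sr\circ h}$, $b=|\det Dh|$, $c=v\circ h$, and apply the product rule $|abc|_\alpha\le|a|_\alpha|b|_\infty|c|_\infty+|a|_\infty|b|_\alpha|c|_\infty+|a|_\infty|b|_\infty|c|_\alpha$. Hypothesis~(ii) combined with $|e^x-e^y|\le\max(e^x,e^y)|x-y|$ controls $|b|_\alpha\le C_1|\det Dh|_\infty$, and hypothesis~(i) for $n=1$ gives $|c|_\alpha\le(C_1\rho_0)^\alpha|v|_\alpha$. The delicate factor is $|a|_\alpha$. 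The naive Lipschitz bound from (iii) gives $|a(y)-a(y')|\le C_1|s|e^{\eps|r\circ h|_\infty}d(y,y')$, carrying a factor $|s|\sim|b|$ which would be fatal after dividing by $1+|b|^\alpha$. The key step is to interpolate between this Lipschitz bound and the trivial $L^\infty$ bound $|a(y)-a(y')|\le 2e^{\eps|r\circ h|_\infty}$ via $|a(y)-a(y')|=|a(y)-a(y')|^{1-\alpha}|a(y)-a(y')|^\alpha$, yielding
\[
|a|_\alpha\le 2^{1-\alpha}(C_1|s|)^\alpha e^{\eps|r\circ h|_\infty}\le C(1+|b|^\alpha)e^{\eps|r\circ h|_\infty}
\]
on $|\sigma|<\eps$, since $|s|^\alpha\le\eps^\alpha+|b|^\alpha$. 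Summing via (iv) gives $|P_sv|_\alpha\le C'\{(1+|b|^\alpha)|v|_\infty+|v|_\alpha\}$, and dividing by $1+|b|^\alpha$ together with the sup bound yields $\|P_sv\|_b\le C''\|v\|_b$ uniformly for $|\sigma|<\eps$.

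For continuity on $\{\sigma>-\eps\}$, fix $s_0$ and a bounded neighbourhood $N\subset\{\sigma>-\eps\}$ of $s_0$. The preceding estimates, now read with respect to the ordinary $C^\alpha$-norm rather than $\|\cdot\|_b$, give an operator bound $\|A_{s,h}\|_{C^\alpha\to C^\alpha}\le K_N e^{\eps|r\circ h|_\infty}|\det Dh|_\infty$ with $K_N$ independent of $s\in N$. For each fixed $h$ and $v$, $s\mapsto A_{s,h}v$ is continuous (indeed smooth) by elementary properties of $e^{-sr\circ h}$. Hypothesis~(iv) therefore supplies a summable dominating series, and Weierstrass's $M$-test delivers operator-norm continuity of $P_s=\sum_h A_{s,h}$ at $s_0$.

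The main technical point is the H\"older estimate for the oscillatory factor $e^{-sr\circ h}$: the interpolation step is what converts $|s|$-growth into $|s|^\alpha$-growth, and without it the uniform bound in $\|\cdot\|_b$ would fail. Everything else is routine summation using (iv).
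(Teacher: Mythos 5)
Your argument is correct and is essentially the paper's proof written out in full: the paper only records the Jacobian H\"older estimate $\bigl||\det Dh(x)|-|\det Dh(y)|\bigr|\le C_1|\det Dh|_\infty d(x,y)^\alpha$ (your bound on the factor $b$) and then defers to \cite[Proposition~2.5]{AraujoM16}, whose argument is exactly your sup-norm bound, the interpolation of the oscillatory factor $e^{-sr\circ h}$ converting the Lipschitz constant $\sim|s|$ into $\sim|s|^\alpha$, and summation over branches via condition~(iv). The only point to tidy is that for operator-norm continuity of $P_s$ (needed later for the eigenvalue perturbation) you should note that each fixed branch $s\mapsto A_{s,h}$ is continuous in operator norm, not merely strongly, which follows since each $r\circ h$ is a fixed bounded Lipschitz function; with the dominating series from~(iv) this closes the Weierstrass argument as you indicate.
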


\begin{proof}
The first five lines of the proof of~\cite[Proposition~2.5]{AraujoM16} should be changed to the following:

Using the inequality $1-t\le -\log t$ valid for $t>0$, we obtain
for $a>b>0$ that
\(
a-b=a(1-\frac{b}{a})\le -a\log\frac{b}{a}=a(\log a-\log b).
\)
Hence
$\big||\det Dh(x)|-|\det Dh(y)|\big|\le |\det Dh|_\infty\big(\log|\det Dh(x)|-\log|\det Dh(y)|\big)$ and so by~(ii),
\begin{equation} \label{eq:h'0}
\big||\det Dh(x)|-|\det Dh(y)|\big|\le C_1|\det Dh|_\infty\, d(x,y)^\alpha \quad\text{for all
$h\in\cH$, $x,y\in Y$.}
\end{equation}
The proof now proceeds exactly as for~\cite[Proposition~2.5]{AraujoM16} (with
$R$, $h'$ and $|x-y|$ changed to 
$r$, $\det Dh$ and $d(x,y)$).
\end{proof}

The unperturbed operator $P_0$ has a simple leading eigenvalue $\lambda_0=1$
with strictly positive $C^\alpha$ eigenfunction $f_0$.   By Proposition~\ref{prop:P}, there exists $\eps\in(0,1)$ such that $P_\sigma$ has a continuous family of simple eigenvalues $\lambda_\sigma$
for $|\sigma|<\eps$ with associated $C^\alpha$ eigenfunctions $f_\sigma$.
For $s=\sigma+ib$ with $|\sigma|\le\eps$,
we define the normalised transfer operators
\[
L_sv=(\lambda_\sigma f_\sigma)^{-1}P_s(f_\sigma v)
=(\lambda_\sigma f_\sigma)^{-1}\sum_{h\in\cH}A_{s,h}(f_\sigma v).
\]
In particular, $L_\sigma 1=1$ and $|L_s|_\infty\le1$.

Set $C_2=C_1^2/(1-\rho)$, $\rho=\rho_0^\alpha$.  Then
\begin{itemize}
\item[(ii$_1$)]
$|\log |\det Dh|\,|_\alpha\le C_2$ for all $h\in\cH_n$, $n\ge1$,
\item[(iii$_1$)]
$|D(r_n\circ h)|_\infty\le C_2$ for all $h\in\cH_n$, $n\ge1$.
\end{itemize}

Write
\[
L_s^nv=\lambda_\sigma^{-n} f_\sigma^{-1}\sum_{h\in\cH_n}A_{s,h,n}(f_\sigma v), \quad
A_{s,h,n}v=e^{-sr_n\circ h}|\det Dh|v\circ h.
\]

\begin{lemma}[Lasota-Yorke inequality] \label{lem:LY}
There is a constant $C_3>1$ such that
\[
|L_s^nv|_\alpha\le C_3(1+|b|^\alpha) |v|_\infty+C_3\rho^n |v|_\alpha
\le C_3(1+|b|^\alpha)\{ |v|_\infty+\rho^n \|v\|_b\},
\]
for all $s=\sigma+ib$, $|\sigma|<\eps$, and all $n\ge1$, $v\in C^\alpha(Y)$.
\end{lemma}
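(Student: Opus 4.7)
The plan is to expand $L_s^n v(x)-L_s^n v(y)$ branch by branch and control each piece using the three ingredients built into the framework: the uniform contraction $|Dh|_\infty\le C_1\rho_0^n$ of (i), which will give the $\rho^n$ gain on the $v$-variation; bounded distortion in the forms (ii$_1$) and (iii$_1$), which gives Hölder regularity of the modulus of the weight; and the standard Dolgopyat interpolation $|e^{iu}-e^{iv}|=2|\sin((u-v)/2)|\le 2^{1-\alpha}|u-v|^\alpha$, which produces the $|b|^\alpha$ from $|b|$. Writing $L_s^nv(x)=\sum_{h\in\cH_n}w_s(x,h)v(h(x))$ with
\[
w_s(x,h)=\lambda_\sigma^{-n}f_\sigma(x)^{-1}f_\sigma(h(x))|\det Dh(x)|e^{-sr_n(h(x))}=W_\sigma(x,h)\,e^{-ibr_n(h(x))},
\]
we have $|w_s|=W_\sigma$ and $\sum_hW_\sigma(x,h)=L_\sigma^n 1(x)=1$. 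I would telescope in the usual way,
\[
w_s(x,h)v(h(x))-w_s(y,h)v(h(y))=w_s(x,h)[v(h(x))-v(h(y))]+[w_s(x,h)-w_s(y,h)]v(h(y)),
\]
so that the two resulting sums isolate the $v$-variation from the weight variation.

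The first sum, summed over $h\in\cH_n$, is bounded by $|v|_\alpha(C_1\rho_0^n)^\alpha d(x,y)^\alpha\sum_hW_\sigma(x,h)\le C_1^\alpha\rho^n|v|_\alpha d(x,y)^\alpha$. For the second sum I would split once more,
\[
w_s(x,h)-w_s(y,h)=[W_\sigma(x,h)-W_\sigma(y,h)]e^{-ibr_n(h(x))}+W_\sigma(y,h)[e^{-ibr_n(h(x))}-e^{-ibr_n(h(y))}].
\]
Writing $W_\sigma=e^\phi$ with $\phi=\log W_\sigma$, the Hölder seminorm of $\phi$ is bounded uniformly in $h\in\cH_n$ and $n$ using (ii$_1$), (iii$_1$) (the latter contributing at most $|\sigma|C_2\le\eps C_2$ to the Hölder norm), plus uniform $C^\alpha$ control of $f_\sigma^{\pm1}$ for $|\sigma|<\eps$ inherited from Proposition~\ref{prop:P} and analytic perturbation of the simple eigenvalue $\lambda_0=1$. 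Therefore $|W_\sigma(x,h)-W_\sigma(y,h)|\le C'W_\sigma(y,h)d(x,y)^\alpha$ with $C'$ independent of $h$ and $n$, and summing yields a contribution $\le C'|v|_\infty d(x,y)^\alpha$. For the oscillatory part, (iii$_1$) gives $|r_n(h(x))-r_n(h(y))|\le C_2d(x,y)$, which combined with the interpolation above produces $|e^{-ibr_n(h(x))}-e^{-ibr_n(h(y))}|\le 2^{1-\alpha}(|b|C_2)^\alpha d(x,y)^\alpha$ and hence a total contribution $\le 2^{1-\alpha}C_2^\alpha|b|^\alpha|v|_\infty d(x,y)^\alpha$.

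Collecting the three estimates gives $|L_s^nv|_\alpha\le C_3\rho^n|v|_\alpha+C_3(1+|b|^\alpha)|v|_\infty$, and the second form of the inequality follows from $|v|_\alpha\le(1+|b|^\alpha)\|v\|_b$. The one point requiring genuine care is that the constants in (ii$_1$) and (iii$_1$), together with the Hölder constant of $\log f_\sigma$, are truly independent of $n$: the first two are controlled by the geometric series $\sum_j\rho^j=1/(1-\rho)$ produced by (i), and the last follows from the continuity of the eigendata in Proposition~\ref{prop:P} together with the fact that $f_\sigma$ is bounded away from $0$ and $\infty$ uniformly for $|\sigma|<\eps$. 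The remaining work is just bookkeeping with the constants $C_1,C_2,C_3$.
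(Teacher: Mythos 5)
Your proof is correct and follows essentially the same route as the paper: the paper merely records the H\"older estimate on $|\det Dh|$ coming from (ii$_1$) and then defers to the cited Lemma~2.7 of Ara\'ujo--Melbourne, whose argument is exactly your telescoping decomposition, with the $\rho^n$ gain from branch contraction, the uniform multiplicative H\"older control of the modulus of the normalised weight (using (ii$_1$), (iii$_1$) and the uniformity of $f_\sigma$, $\lambda_\sigma$ for $|\sigma|<\eps$), and the interpolation $\min\{2,|b|C_2\,d(x,y)\}\le 2^{1-\alpha}(C_2|b|)^\alpha d(x,y)^\alpha$ for the oscillatory factor. No gaps; the points you flag (uniformity in $n$ via the geometric series and uniformity of $|\log f_\sigma|_\alpha$ in $\sigma$) are precisely the ones the cited proof also relies on.
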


\begin{proof}
It follows from (ii$_1$) that 
\[
\big||\det Dh(x)|-|\det Dh(y)|\big|\le C_2|\det Dh|_\infty\, d(x,y)^\alpha
\le C_2e^{C_2}|\det Dh(z)|\, d(x,y)^\alpha
\]
for all $h\in\cH_n$, $n\ge1$, $x,y,z\in Y$.
The proof now proceeds exactly as for~\cite[Lemma~2.7]{AraujoM16}.
\end{proof}

\begin{cor} \label{cor:LY}
$\|L_s^n\|_b\le 2C_3$ for all
$s=\sigma+ib$, $|\sigma|<\eps$, and all $n\ge1$.
\end{cor}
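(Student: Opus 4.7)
The plan is to read off the corollary directly from Lemma~\ref{lem:LY} and the trivial $L^\infty$ bound $|L_s^n|_\infty\le1$, by unpacking the definition of $\|\cdot\|_b$.

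First I would note that since $\|v\|_b=\max\{|v|_\infty,\,|v|_\alpha/(1+|b|^\alpha)\}$, to bound $\|L_s^nv\|_b$ it suffices to control the sup norm and the scaled H\"older seminorm of $L_s^nv$ separately. The sup norm is handled immediately: since $L_\sigma 1=1$ and $|e^{-ibr_n\circ h}|=1$, one has $|L_s^n|_\infty\le 1$, hence $|L_s^nv|_\infty\le|v|_\infty\le\|v\|_b$.

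For the H\"older part, I would divide the Lasota-Yorke estimate of Lemma~\ref{lem:LY} through by $(1+|b|^\alpha)$, obtaining
\[
\frac{|L_s^nv|_\alpha}{1+|b|^\alpha}\le C_3\bigl\{|v|_\infty+\rho^n\|v\|_b\bigr\}\le C_3\bigl\{\|v\|_b+\|v\|_b\bigr\}=2C_3\|v\|_b,
\]
using $\rho^n\le 1$ and $|v|_\infty\le\|v\|_b$. Taking the maximum of the two bounds and using $C_3>1$ yields $\|L_s^nv\|_b\le 2C_3\|v\|_b$.

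There is no real obstacle here: the corollary is a one-line bookkeeping consequence of Lemma~\ref{lem:LY}, designed precisely so that iteration of $L_s$ does not blow up the $\|\cdot\|_b$ norm as $|b|\to\infty$. The only thing worth being careful about is that the constant $2C_3$ is uniform in $n$ and in $s$ with $|\sigma|<\eps$, which follows because Lemma~\ref{lem:LY} already provides such uniformity.
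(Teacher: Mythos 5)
Your argument is correct and is exactly the standard bookkeeping the paper has in mind: the paper simply defers to~\cite[Corollary~2.8]{AraujoM16}, whose proof combines $|L_s^n|_\infty\le1$ (from $L_\sigma1=1$ and $|e^{-ibr_n\circ h}|=1$) with the Lasota--Yorke inequality of Lemma~\ref{lem:LY} divided by $1+|b|^\alpha$, precisely as you do. Nothing further is needed.
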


\begin{proof}
This is unchanged from~\cite[Corollary~2.8]{AraujoM16}.
\end{proof}

Given $b\in\R$, we define the cone
\begin{align*}
\cC_b=\Bigl\{ & \;(u,v): u,v\in C^\alpha(Y),\;u>0,\;0\le |v|\le u,\;
|\log u|_\alpha\le C_4|b|^\alpha,\\
& \qquad\qquad |v(x)-v(y)|\le C_4|b|^\alpha u(y)d(x,y)^\alpha\quad\text{for
all $x,y\in Y$} \;\Bigr\}.
\end{align*}
Throughout $B_\delta(y)=\{x\in \R^m:d(x,y)<\delta\}$.

\begin{lemma}[Cancellation Lemma] \label{lem:cancel}
Assume that the (UNI) condition is satisfied (with associated constants $E>0$ and $n_0\ge1$).
Let $h_1,h_2\in \cH_{n_0}$ be the branches from (UNI).

There exists $0<\delta<\Delta=4\pi/E$ such that for all
$s=\sigma+ib$, $|\sigma|<\eps$, $|b|\ge1$,
and all $(u,v)\in\cC_b$ we have the following:

For every $y'\in Y$ with $B_{(\delta+\Delta)/|b|}(y')\subset Y$, there exists $y''\in B_{\Delta/|b|}(y')$ such that
one of the following inequalities holds
on $B_{\delta/|b|}(y'')$:
\begin{description}
\item[Case $h_1$:]
$|A_{s,h_1,n_0}(f_\sigma v)+ A_{s,h_2,n_0}(f_\sigma v)| \le
\tfrac34 A_{\sigma,h_1,n_0}(f_\sigma u)+
A_{\sigma,h_2,n_0}(f_\sigma u)$,
\item[Case $h_2$:]
$|A_{s,h_1,n_0}(f_\sigma v)+ A_{s,h_2,n_0}(f_\sigma v)| \le
A_{\sigma,h_1,n_0}(f_\sigma u)+
\tfrac34 A_{\sigma,h_2,n_0}(f_\sigma u)$.
\end{description}
\end{lemma}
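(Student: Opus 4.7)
The approach is a classical Dolgopyat-style phase cancellation argument. The first step is algebraic: since $A_{s,h,n_0}(f_\sigma v) = e^{-ib\, r_{n_0}\circ h}\,A_{\sigma,h,n_0}(f_\sigma v)$, writing $\psi = \psi_{h_1,h_2}$, $u_i = A_{\sigma,h_i,n_0}(f_\sigma u)$ and $v_i = A_{\sigma,h_i,n_0}(f_\sigma v)$ and factoring out the unit-modulus factor $e^{-ibr_{n_0}\circ h_1}$, the two cases of the conclusion reduce to the pointwise bound on $B_{\delta/|b|}(y'')$
\[
\bigl| v_1 + e^{ib\psi} v_2 \bigr| \;\le\; \tfrac34 u_1 + u_2 \quad\text{or}\quad u_1 + \tfrac34 u_2,
\]
given the a priori estimate $|v_i|\le u_i$ that follows from $(u,v)\in\cC_b$.

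Second, I use UNI to locate $y''$. Along the integral curve of $\ell$ issuing from $y'$ of arclength $\Delta/|b| = 4\pi/(E|b|)$, which remains inside $B_{(\delta+\Delta)/|b|}(y')\subset Y$, the function $b\psi$ changes by at least $E\,|b|\,(\Delta/|b|) = 4\pi$. Hence any prescribed residue of $b\psi\bmod 2\pi$ is attained at some $y''\in B_{\Delta/|b|}(y')$. Applying property~(iii$_1$) to each $r_{n_0}\circ h_i$ gives $|D\psi|_\infty\le 2C_2$, so $|b\psi(y)-b\psi(y'')|\le 2C_2\delta$ for $y\in B_{\delta/|b|}(y'')$, and by choosing $\delta$ small enough $e^{ib\psi}$ stays as close to $e^{ib\psi(y'')}$ as we please throughout the small ball.

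The heart of the argument is a dichotomy at $y''$. If $|v(h_j(y''))|\le \tfrac78\, u(h_j(y''))$ for $j=1$ (respectively $j=2$), then the cone inequality $|v(x)-v(x')|\le C_4|b|^\alpha u(x')\,d(x,x')^\alpha$, combined with $|Dh_j|_\infty\le C_1\rho_0^{n_0}$ and $|\log u|_\alpha\le C_4|b|^\alpha$, propagates the estimate to $|v_j|\le \tfrac34 u_j$ throughout $B_{\delta/|b|}(y'')$; the triangle inequality then yields Case $h_j$ without any cancellation. In the complementary regime where $|v(h_j(y''))|>\tfrac78\, u(h_j(y''))$ for both $j=1,2$, the ratios $v_i/u_i$ have well-defined arguments that are nearly constant on the ball (by the same Hölder estimate). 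Now I exploit the freedom in Step~2 to instead choose $y''$ realizing $b\psi(y'')+\arg(v_2/v_1)(y'')\equiv\pi\pmod{2\pi}$; this remains possible because $b\psi$ sweeps an interval of length $\ge 4\pi$ along the curve while the perturbation from $\arg(v_2/v_1)$ is bounded. Then $v_1$ and $e^{ib\psi(y'')}v_2$ are anti-aligned at $y''$, and elementary planar geometry gives $|v_1+e^{ib\psi}v_2|\le \bigl||v_1|-|v_2|\bigr|+\text{(small error)}$, which fits comfortably into either alternative of the conclusion.

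The principal obstacle lies in calibrating the constants so that one universal $\delta$ works: it must simultaneously ensure (i) $e^{ib\psi(y)}$ remains close to $e^{ib\psi(y'')}$ throughout $B_{\delta/|b|}(y'')$, (ii) the Hölder oscillation of $v/u$ on the $\rho_0^{n_0}$-contracted image $h_i(B_{\delta/|b|}(y''))$ is small enough to upgrade the $\tfrac78$ dichotomy threshold to a $\tfrac34$ bound, and (iii) the argument perturbation from $v_2/v_1$ along the UNI curve is strictly less than $2\pi$, so that the target residue for $b\psi(y'')$ is attainable. All three constraints are controlled by the fixed constants $C_1,C_2,C_4,\rho_0,n_0,E$. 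The uniformity in $b$ is built into the cone definition: the factor $|b|^\alpha$ in the Hölder bound is exactly matched by the scale $(\delta/|b|)^\alpha$ of the ball, producing oscillation estimates of order $\delta^\alpha$ that are independent of $b$.
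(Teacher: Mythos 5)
Your skeleton is essentially the paper's: the paper takes over \cite[Lemma~2.9]{AraujoM16} wholesale and modifies only the construction of $y''$, which is done by flowing along the (UNI) vector field and applying the mean value theorem so that $b\psi_{h_1,h_2}$ sweeps an interval of length at least $2\pi$ — precisely your second step, except that the paper first approximates the merely continuous $\ell$ by a smooth field, settling for the lower bound $\tfrac12 E$ and hence a sweep of $2\pi$ rather than your $4\pi$ (with a continuous field you should at least invoke Peano existence for the integral curve). However, your dichotomy step fails as written. From $|v(h_j y'')|\le\tfrac78\,u(h_j y'')$ at a single point, the cone inequalities only yield $|v\circ h_j|\le(\tfrac78+O(\delta^\alpha))\,u\circ h_j$ on the nearby ball: H\"older propagation can only degrade the constant, never improve $\tfrac78$ to $\tfrac34$. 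The threshold in the easy case must be strictly below $\tfrac34$; the paper (following \cite{AraujoM16}) reduces at the centre $y'$ to the situation $|v(h_m y')|>\tfrac12\,u(h_m y')$ for both $m$, so that the propagated bound $\tfrac12+O(\cdot)\le\tfrac34$ closes the easy case.

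Second, the oscillation you must control lives on the scale of the large ball $B_{(\delta+\Delta)/|b|}(y')$, equivalently along the UNI curve of length $\Delta/|b|$, not merely on $B_{\delta/|b|}(y'')$: to re-choose $y''$ solving $b\psi_{h_1,h_2}(y'')+\arg(v_2/v_1)(y'')\equiv\pi\pmod{2\pi}$ you need $v\circ h_1$, $v\circ h_2$ nonvanishing with slowly varying arguments along the entire curve (for continuity and the intermediate value theorem), and the near-anti-alignment must persist on the small ball around the \emph{new} $y''$, which may be far (on the $\Delta/|b|$ scale) from the point where you tested the dichotomy. Since $\Delta=4\pi/E$ is a fixed constant, this smallness cannot be bought by shrinking $\delta$: the relevant oscillation is of order $C_4(C_1\rho_0^{n_0})^\alpha(\delta+\Delta)^\alpha$, and its smallness is exactly what the stipulation that $n_0$ be ``sufficiently large'' in (UNI) provides (the paper points to \cite[(2.1)--(2.3)]{AraujoM16} for this calibration); your closing paragraph attributes it to the choice of $\delta$ and to the matching of $|b|^\alpha$ against $(\delta/|b|)^\alpha$, which only handles the small ball. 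This large-ball control is the paper's step~(2), the bound $|V(y)-V(y')|\le\pi/6$ on $B_{(\delta+\Delta)/|b|}(y')$, carried out at $y'$ \emph{before} $y''$ is chosen — which also removes the mild circularity in your ordering (testing the dichotomy at a point $y''$ that you subsequently move). With the threshold corrected and the large-ball oscillation control made explicit, your argument coincides with the paper's.
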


\begin{proof}
Let $\theta=V-b\psi_{h_1,h_2}$ where $\psi_{h_1,h_2}=r_{n_0}\circ h_1-r_{n_0}\circ h_2$ and $V=\arg(v\circ h_1)-\arg(v\circ h_2)$.

We follow the following steps from~\cite[Lemma~2.9]{AraujoM16}:

\begin{itemize}

\parskip = -2pt
\item[(1)]  Reduce to the situation where
$|v(h_my')|> \frac12 u(h_my')$ for both $m=1$ and $m=2$.

\item[(2)] Establish the estimate
$|V(y)-V(y')|\le \pi/6$
for all $y\in B_{(\delta+\Delta)/|b|}(y')$.

\item[(3)] Construct 
$y''\in B_{\Delta/|b|}(y')$ such that
\[
b(\psi_{h_1,h_2}(y'')-\psi_{h_1,h_2}(y'))=\theta(y')-\pi\, \bmod 2\pi.
\]

\item[(4)] Deduce that $|\theta(y)-\pi|\le 2\pi/3$
for all $y\in B_{(\delta+\Delta)/|b|}(y')$.

\item[(5)] Conclude the desired result.
\end{itemize}

Only step (3) requires any change from the argument in~\cite[Lemma~2.9]{AraujoM16}. We provide here the modified argument.
Approximate the continuous unit vector field $\ell:\R^m\to\R^m$ in (UNI) by a smooth vector field $\ell:\R^m\to\R^m$ with $|\ell(x)|\le 1$ for all $x\in\R^m$.
By condition (iii$_1$), the approximation can be chosen close enough that
\begin{equation} \label{eq:UNI}
|D\psi_{h_1,h_2}(y)\cdot\ell(y)|\ge \tfrac12 E\quad\text{for all $y\in Y$.}
\end{equation}

Let $g:[0,\Delta/|b|]\to \R^m$ be the solution to the initial value problem 
\[
\tfrac{dg}{dt}=\ell\circ g, \quad g(0)=y'
\]
and set $y_t=g(t)$.
Note that $d(y_t,y')\le \int_0^t|\ell(g(s))|\,ds\le \Delta/|b|$, so $y_t\in B_{\Delta/|b|}(y')$ for all $t\in[0,\Delta/|b|]$.
By the mean value theorem applied to
$\psi_{h_1,h_2}\circ g:[0,\Delta/|b|]\to\R$ and~\eqref{eq:UNI},
\[
|\psi_{h_1,h_2}(y_t)-\psi_{h_1,h_2}(y')|\ge t\inf_{s\in[0,\Delta/|b|]}|D\psi_{h_1,h_2}(y_s)\cdot\ell(y_s)|
\ge \tfrac12 Et=(2\pi/\Delta)t
\]
for all $t\in [0,\Delta/|b|]$.
It follows that $b(\psi_{h_1,h_2}(y_t)-\psi_{h_1,h_2}(y'))$ fills out an interval around $0$ of length at least $2\pi$ as~$t$ varies in $[0,\Delta/|b|]$.
In particular, we can choose $y''\in B_{\Delta/|b|}(y')$ such that (3) holds.
\end{proof}

Let $\{y_1',\dots,y_k'\}\subset Y$ be a maximal set of points such that 
the open balls $B_{(\delta+\Delta)/|b|}(y_i')$ are disjoint and contained in $Y$.

Let $(u,v)\in\cC_b$. For each $i=1,\dots,k$, there exists a ball
$B_i=B_{\delta/|b|}(y_i'')$ on which the conclusion of Lemma~\ref{lem:cancel} holds.
Write $\type(B_i)=h_m$ if we are in case $h_m$.
Let $\hB_i=B_{\frac12\delta/|b|}(y_i'')$

There exists a universal constant $C>0$ and a $C^1$ function $\omega_i:Y\to[0,1]$ such that
$\omega_i\equiv1$ on $\hB_i$, $\omega_i\equiv0$ on
$Y\setminus B_i$, and $\|\omega_i\|_{C^1}\le C|b|/\delta$.
Define $\omega:Y\to[0,1]$,
\[
\omega(y) = \begin{cases} \sum_{\type(B_i)=h_m}\omega_i(F^{n_0}y), & y\in\range{h_m},\,m=1,2 \\
0 & \text{otherwise}.
\end{cases}
\]
Note that $\|\omega\|_{C^1}\le C'|b|$ where $C'=C\delta$ is independent of $(u,v)\in\cC_b$ and $s\in\C$, and we can assume that $C'>4$.
Then $\chi=1-\omega/C':Y\to[\frac34,1]$ satisfies $|D\chi|\le |b|$.
Moreover, if $\type(B_i)=h_m$ then $\chi\equiv\eta$ on $\range h_m$
where $\eta=1-1/C'\in(0,1)$.

\begin{cor} \label{cor:cancel}
Let $\delta$, $\Delta$ be as in Lemma~\ref{lem:cancel}.
Let $|b|\ge1$, $(u,v)\in\cC_b$.
Let $\chi=\chi(b,u,v)$ be the $C^1$ function described above (using the
branches $h_1,h_2\in\cH_{n_0}$ from (UNI)).
Then $|L_s^{n_0}v|\le 
L_\sigma^{n_0}(\chi u)$ for all $s=\sigma+ib$, $|\sigma|<\eps$.
\end{cor}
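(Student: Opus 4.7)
The plan is to establish the inequality pointwise at each $y\in Y$ by comparing the two transfer-operator sums branch-by-branch. Since $L_s^{n_0}v(y)$ and $L_\sigma^{n_0}(\chi u)(y)$ carry the common positive prefactor $\lambda_\sigma^{-n_0}f_\sigma(y)^{-1}$, the claim reduces to
\[
\Big|\sum_{h\in\cH_{n_0}}A_{s,h,n_0}(f_\sigma v)(y)\Big| \le \sum_{h\in\cH_{n_0}}A_{\sigma,h,n_0}(f_\sigma\chi u)(y),
\]
which I would prove by splitting $\cH_{n_0}$ into the distinguished pair $\{h_1,h_2\}$ from (UNI) and the remaining branches.

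For every $h\in\cH_{n_0}$ with $h\notin\{h_1,h_2\}$, the set $\range h$ is a partition element disjoint from $\range h_1\cup\range h_2$, so $\omega\equiv 0$ on $\range h$ by construction and $\chi(h(y))=1$. Using $|v|\le u$, $f_\sigma>0$ and $|e^{-sr_{n_0}\circ h}|=e^{-\sigma r_{n_0}\circ h}$, the trivial pointwise bound
\[
|A_{s,h,n_0}(f_\sigma v)(y)| \le A_{\sigma,h,n_0}(f_\sigma u)(y) = A_{\sigma,h,n_0}(f_\sigma\chi u)(y)
\]
holds for each such $h$, so these branches contribute correctly without needing any cancellation.

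For the pair $\{h_1,h_2\}$, write $A_m=A_{\sigma,h_m,n_0}(f_\sigma u)(y)$. The balls $B_i$ are pairwise disjoint, as each sits inside its disjoint parent $B_{(\delta+\Delta)/|b|}(y_i')$, so $y$ belongs to at most one of them. If $y\notin\bigcup_i B_i$, then $\chi(h_1(y))=\chi(h_2(y))=1$ and the required bound on $|A_{s,h_1,n_0}(f_\sigma v)(y)+A_{s,h_2,n_0}(f_\sigma v)(y)|$ is again trivial. Otherwise $y$ lies in the unique $B_i$ containing it; assuming first $\type(B_i)=h_1$, Lemma~\ref{lem:cancel} applies on $B_i$ to give $|A_{s,h_1,n_0}(f_\sigma v)(y)+A_{s,h_2,n_0}(f_\sigma v)(y)| \le \tfrac34 A_1+A_2$. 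Meanwhile $\chi(h_2(y))=1$ (no bump for branch $h_2$ is active at $y$ by disjointness) and $\chi(h_1(y))=1-\omega_i(y)/C' \ge 1-1/C'=\eta>\tfrac34$, using $\omega_i\le 1$ and $C'>4$; hence $\tfrac34 A_1+A_2\le \chi(h_1(y))A_1+\chi(h_2(y))A_2$. The subcase $\type(B_i)=h_2$ is symmetric.

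Summing the pairwise inequalities and restoring the prefactor yields $|L_s^{n_0}v(y)|\le L_\sigma^{n_0}(\chi u)(y)$ at every $y\in Y$. The only real calibration in the argument is that the $\tfrac34$ produced by Lemma~\ref{lem:cancel} must be dominated by the minimum value $\eta=1-1/C'$ of $\chi$ on the support of $\omega$; the stipulation $C'>4$ baked into the construction of $\chi$ is engineered precisely to make this comparison work, so no substantive obstacle arises beyond organising the case split correctly.
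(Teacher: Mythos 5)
Your argument is correct and is exactly what the paper intends when it says the corollary is ``immediate from Lemma~\ref{lem:cancel} and the definition of $\chi$'': you factor out the positive prefactor $\lambda_\sigma^{-n_0}f_\sigma^{-1}$, use $|v|\le u$ and $\chi\equiv 1$ on branches other than $h_1,h_2$, and on the distinguished pair invoke the cancellation estimate on the unique ball $B_i$ containing $y$ (or the trivial bound off $\bigcup_i B_i$), noting that $\chi\circ h_m\ge 1-1/C'>\tfrac34$ on the active bump and $\equiv 1$ on the inactive branch. No gap; your write-up simply spells out the routine case analysis the paper leaves implicit.
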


\begin{proof}
This is immediate from Lemma~\ref{lem:cancel} and the definition of $\chi$.
\end{proof}

Define the disjoint union $\hB=\bigcup\hB_i$. 

\begin{prop} \label{prop:fed}
Let $K>0$. There exists $c_1>0$ such that
$\int_{\hB}w\,d\mu\ge c_1 \int_Y w\,d\mu$
for all $C^\alpha$ function $w:Y\to(0,\infty)$ with $|\log w|_\alpha\le K|b|^\alpha$, for all $|b|\ge 16\pi/E$.
\end{prop}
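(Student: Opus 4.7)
The plan is a covering-and-ratio argument at the scale $1/|b|$, matched to the Hölder oscillation bound $|\log w|_\alpha\le K|b|^\alpha$. Since $d\mu/d\Leb$ is bounded above and below, it suffices to prove the analogous inequality with Lebesgue measure, absorbing the density ratio into $c_1$.

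Set $r=(\delta+\Delta)/|b|$. \emph{Covering.} I first claim that the enlarged balls $\{B_{3r}(y_i')\}$ cover $Y$. For any $z\in Y$ with $d(z,\partial Y)\ge r$, the ball $B_r(z)$ lies in $Y$, so maximality of $\{y_i'\}$ forces $d(z,y_i')<2r$ for some $i$. For $z$ closer to the boundary I exploit that $Y$ is a ball and retract $z$ radially toward the center by at most $r$ to a point $z^*$ with $d(z^*,\partial Y)\ge r$; the previous case applied to $z^*$ gives $d(z,y_i')<3r$. The hypothesis $|b|\ge 16\pi/E$ forces $r<1/2$, so the family is nonempty (the center of $Y$ qualifies). \emph{Localisation.} The sets $\hB_i=B_{\delta/(2|b|)}(y_i'')$ are pairwise disjoint subsets of $Y$ contained in their respective covering balls, since $\hB_i\subset B_{(\delta/2+\Delta)/|b|}(y_i')\subset B_r(y_i')$ and the $B_r(y_i')$ are disjoint by construction; a direct Euclidean computation then yields the universal ratio $\Leb(\hB_i)/\Leb(B_{3r}(y_i'))=(\delta/(6(\delta+\Delta)))^m$.

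The key scale-matching is that for $x\in B_{3r}(y_i')\cap Y$,
\[
|\log w(x)-\log w(y_i')|\le K|b|^\alpha(3r)^\alpha=K\bigl(3(\delta+\Delta)\bigr)^\alpha=:M,
\]
a constant \emph{independent of $b$}. Hence $w$ varies by at most a factor $e^{2M}$ on each covering ball, so
\[
\int_{\hB_i}w\,d\Leb\ge e^{-2M}\bigl(\Leb(\hB_i)/\Leb(B_{3r}(y_i'))\bigr)\int_{B_{3r}(y_i')\cap Y}w\,d\Leb;
\]
summing over $i$ and using the cover together with $w>0$ gives $\int_{\hB}w\,d\Leb\ge c'\int_Y w\,d\Leb$ for a universal $c'>0$. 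The only conceptual obstacle is the boundary strip $\{d(\cdot,\partial Y)<r\}$, where maximality alone provides no nearby $y_i'$: the radial retraction rescues this step, and it is the one ingredient that uses the specific geometry of $Y$ being a ball rather than a general John domain.
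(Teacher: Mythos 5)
Your argument is correct and is essentially the paper's own proof: a maximal-disjoint-ball covering of $Y$ at scale $(\delta+\Delta)/|b|$ (with the enlarged balls $B_{3(\delta+\Delta)/|b|}(y_i')$ covering $Y$), combined with the observation that $|\log w|_\alpha\le K|b|^\alpha$ makes the oscillation of $w$ on such balls bounded by a constant independent of $b$, and the boundedness of $d\mu/d\Leb$. Your explicit radial-retraction treatment of the boundary strip and the explicit Euclidean volume ratio are just more detailed versions of steps the paper states briefly (the paper compares $\mu(\hB_i)$ and $\mu(B_i^*)$ directly via the density bounds), so the two proofs coincide in substance.
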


\begin{proof}
Let $y\in Y$. Since $(\delta+\Delta)/|b|\le 2\Delta/|b|=8\pi/(E|b|)\le\frac12$,
there exists $z\in Y$ with $B_{(\delta+\Delta)/|b|}(z)\subset Y$ such that $d(z,y)<(\delta+\Delta)/|b|$.
By maximality of the set of points $\{y'_1,\dots,y'_k\}$,
there exists $y'_i$ such that $B_{(\delta+\Delta)/|b|}(z)$ intersects
$B_{(\delta+\Delta)/|b|}(y'_i)$.
 Hence 
$Y\subset \bigcup_{i=1}^k B_i^*$
where $B_i^*= B_{3(\delta+\Delta)/|b|}(y_i')$.
Since the density $d\mu/d\Leb$ is bounded above and below, there is a constant $c_2>0$ such that
$\mu(\hB_i)\ge c_2 \mu(B_i^*)$ for each $i$.

Let $x\in \hB_i$, $y\in B_i^*$.
Then $d(x,y)\le 4(\delta+\Delta)/|b|$
and so $|w(x)/w(y)|\le e^{K'}$ where
$K'= \{4(\delta+\Delta)\}^\alpha K$.
It follows that
\[
\int_{\hB_i}w\,d\mu  
\ge\mu(\hB_i)\inf_{\hB_i} w \ge
 c_2 e^{-K'} \mu(B_i^*)\sup_{B_i^*} w
\ge c_1 \int_{B_i^*}w\,d\mu,
\]
where $c_1=c_2e^{-K'}$.
Since the sets $\hB_i\subset Y$ are disjoint,
\[
\int_{\hB} w\,d\mu  
=\sum_i\int_{\hB_i}w\,d\mu  
\ge c_1 \sum_i\int_{B_i^*}w\,d\mu
\ge c_1 \int_Y w\,d\mu
\]
as required.
\end{proof}

\begin{lemma}[Invariance of cone] \label{lem:cone}
There is a constant $C_4$ depending only on $C_1$, $C_2$, $|f_0^{-1}|_\infty$
and $|f_0|_\alpha$ such that the following holds:
 
For all $(u,v)\in \cC_b$, we have that
\[
\bigl(\,L_\sigma^{n_0}(\chi u)\,,\,L_s^{n_0}v\,\bigr)\in \cC_b,
\]
for all $s=\sigma+ib$, $|\sigma|<\eps$, $|b|\ge1$.
(Here, $\chi=\chi(b,u,v)$ is from Corollary~\ref{cor:cancel}.)
\end{lemma}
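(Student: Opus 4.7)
The plan is to verify the three conditions defining $\cC_b$ for the pair $(u',v') := (L_\sigma^{n_0}(\chi u), L_s^{n_0}v)$. Regularity $u',v' \in C^\alpha(Y)$ is standard from the Lasota-Yorke machinery of Lemma~\ref{lem:LY}; positivity $u'>0$ follows from $\chi \ge 3/4$ and $u>0$; and the pointwise domination $|v'| \le u'$ is precisely Corollary~\ref{cor:cancel}. So the content is in the two Hölder bounds, and both must close with a single constant $C_4$ depending only on $C_1, C_2, |f_0^{-1}|_\infty, |f_0|_\alpha$.

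For $|\log u'|_\alpha \le C_4 |b|^\alpha$, I would bound the ratio $u'(x)/u'(y)$ by the maximum over branches $h \in \cH_{n_0}$ of the ratio of individual summands in $L_\sigma^{n_0}(\chi u)$. The factors $f_\sigma^{\pm 1}$, $e^{-\sigma r_{n_0}\circ h}$ and $|\det Dh|$ together contribute a factor $e^{C_5 d(x,y)^\alpha}$ with $C_5$ independent of $b$, using Proposition~\ref{prop:P}, (ii$_1$), and (iii$_1$). The $\chi$ factor gives at most $e^{C_6 |b|^\alpha d(x,y)^\alpha}$ since $\chi \in [3/4,1]$ has derivative bounded by $|b|$, so $|\log \chi|_\alpha \le C_6 |b|^\alpha$. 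The $u$ factor contributes $e^{C_4 \rho^{n_0} |b|^\alpha d(x,y)^\alpha}$ by the cone assumption on $u$ combined with the contraction $|Dh|_\infty \le C_1 \rho_0^{n_0}$. Summing logs and using $|b| \ge 1$ yields $|\log u'|_\alpha \le (C_5 + C_6 + C_4 \rho^{n_0}) |b|^\alpha$, which is $\le C_4 |b|^\alpha$ once $C_4 \ge (C_5+C_6)/(1-\rho^{n_0})$.

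For $|v'(x) - v'(y)| \le C_4 |b|^\alpha u'(y) d(x,y)^\alpha$, I would decompose the difference by telescoping across the four factors $f_\sigma^{-1}$, $e^{-s r_{n_0}\circ h}$, $|\det Dh|\cdot f_\sigma \circ h$, and $v\circ h$ inside each summand. Using $|v| \le u$ pointwise and the distortion bounds, the perturbations of the first and third factors produce a contribution $\le C_7 L_\sigma^{n_0}u(y) d(x,y)^\alpha$ with $C_7$ independent of $b$. The oscillatory $e^{-s r_{n_0}}$ perturbation contributes (up to constants) $|b| d(x,y) L_\sigma^{n_0}u(y)$; in the regime $|b|d(x,y)\le 1$ this is $\le |b|^\alpha d(x,y)^\alpha L_\sigma^{n_0}u(y)$, while in the regime $|b|d(x,y)>1$ one instead uses $|v'(x)-v'(y)|\le 2u'(y) \le 2u'(y)(|b|d(x,y))^\alpha$. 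The $v$-perturbation, via the cone assumption on $v$ and the contraction of $h$, yields $\le C_4 \rho^{n_0}|b|^\alpha L_\sigma^{n_0}u(y) d(x,y)^\alpha$. Since $\chi\ge 3/4$, $L_\sigma^{n_0}u(y) \le \tfrac{4}{3} u'(y)$, so altogether
\[
|v'(x)-v'(y)| \le \tfrac{4}{3}(C_7 + C_4 \rho^{n_0})|b|^\alpha u'(y) d(x,y)^\alpha,
\]
which is $\le C_4 |b|^\alpha u'(y) d(x,y)^\alpha$ provided $C_4 \ge \tfrac{4}{3}C_7/(1-\tfrac{4}{3}\rho^{n_0})$, using that $n_0$ from (UNI) may be taken large enough that $\rho^{n_0}<3/4$.

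The main obstacle is closing both Hölder bounds with a single $C_4$; this works because the $C_4$-dependent right-hand contributions carry a contracting factor $\rho^{n_0}<1$, so the self-improvement argument goes through once $C_4$ exceeds the explicit threshold determined by $C_5,C_6,C_7$, and thereby only by $C_1,C_2,|f_0^{-1}|_\infty,|f_0|_\alpha$ as required. A subsidiary technical nuisance is the two-regime treatment of the $e^{-s r_{n_0}}$ factor, needed because its naive Lipschitz bound overshoots the $|b|^\alpha$ target precisely when $|b|d(x,y)>1$; there one replaces the detailed estimate by the trivial pointwise domination $|v'|\le u'$ already obtained from Corollary~\ref{cor:cancel}.
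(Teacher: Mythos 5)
Your outline follows the standard route (the paper itself does not reprove this lemma but simply invokes \cite[Lemma~2.12]{AraujoM16}): positivity, domination of $|L_s^{n_0}v|$ by $L_\sigma^{n_0}(\chi u)$ via Corollary~\ref{cor:cancel}, a branchwise ratio estimate for $|\log L_\sigma^{n_0}(\chi u)|_\alpha$, a telescoped estimate for the second component, and closure of the self-improvement using the contraction $\rho^{n_0}$ from the ``sufficiently large $n_0$'' in (UNI). However, one step as written would fail: your treatment of the regime $|b|\,d(x,y)>1$ via ``$|v'(x)-v'(y)|\le 2u'(y)$''. From $|v'|\le u'$ one only gets $|v'(x)-v'(y)|\le u'(x)+u'(y)$, and inside the cone the ratio $u'(x)/u'(y)$ is controlled only by $\exp\bigl(C_4|b|^\alpha d(x,y)^\alpha\bigr)$ (cone elements genuinely oscillate by such factors, e.g.\ $u=\exp(C_4|b|^\alpha\psi)$ with $|\psi|_\alpha\le1$). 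In the regime $|b|\,d(x,y)>1$ this exponential is neither $O(1)$ nor $O(|b|^\alpha d(x,y)^\alpha)$, so the fallback at the level of the assembled functions $u',v'$ does not close the fourth cone inequality; the same objection applies if one tries $u'(x)\le e^{cC_4\rho^{n_0}|b|^\alpha}u'(y)$ from the branchwise ratio bound.

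The repair is to truncate \emph{inside} the transfer-operator sum, keeping all weights at $y$: order the telescoping so that in every piece except the $v$-difference piece the factor $v$ is evaluated at $hy$ (so the weight is $|v(hy)|\le u(hy)$; otherwise one must transfer $v(hx)$ to $hy$ using the cone's fourth condition, not the log-H\"older bound on $u$), convert the remaining $x$-evaluated factors $e^{-\sigma r_{n_0}\circ h}$, $|\det Dh|$, $f_\sigma^{\pm1}$ to $y$ by bounded distortion (costing only $b$-independent constants), and bound the oscillatory difference by $|e^{-sr_{n_0}(hx)}-e^{-sr_{n_0}(hy)}|\le \mathrm{const}\,e^{-\sigma r_{n_0}(hy)}\min\{C_2|b|\,d(x,y),\,2\}$. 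Since $\min\{t,2\}\le 2t^\alpha$ for all $t>0$, this gives a contribution $\le \mathrm{const}\,|b|^\alpha d(x,y)^\alpha\,L_\sigma^{n_0}u(y)\le \tfrac43\,\mathrm{const}\,|b|^\alpha d(x,y)^\alpha\,u'(y)$ with constant independent of $b$ and $C_4$, and $u'(x)$ never enters. (The same $\min$ truncation, using $\chi\in[\tfrac34,1]$ so that the oscillation of $\log\chi$ is at most $\log\tfrac43$, is what justifies your claim $|\log\chi|_\alpha\lesssim|b|^\alpha$; the one-line Lipschitz justification alone meets the same $|b|\,d>1$ issue.) The $v$-difference piece then carries the factor $C_4C_1^\alpha\rho^{n_0}|b|^\alpha$, and the self-improvement closes exactly as you describe once $n_0$ is large enough, with $C_4$ depending only on $C_1$, $C_2$, $|f_0^{-1}|_\infty$, $|f_0|_\alpha$.
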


\begin{proof}
This is unchanged from~\cite[Lemma~2.12]{AraujoM16}.
\end{proof}

\begin{lemma}[$L^2$ contraction] \label{lem:L2}
There exist $\eps,\beta\in(0,1)$ such that
\[
\int_Y |L_s^{mn_0}v|^2\,d\mu\le \beta^m|v|_\infty^2
\]
for all $m\ge1$, $s=\sigma+ib$, $|\sigma|<\eps$, $|b|\ge\max\{16\pi/E,1\}$, and all $v\in C^\alpha(Y)$ satisfying $|v|_\alpha\le C_4|b|^\alpha|v|_\infty$.
\end{lemma}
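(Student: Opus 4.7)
The plan is a standard Dolgopyat-type $L^2$-contraction argument, following the template of~\cite{AraujoM16,AGY06}. The three key ingredients are: the forward invariance of the cone $\cC_b$ under the cone iteration (Lemma~\ref{lem:cone}); Jensen's inequality for the Markov operator $L_\sigma^{n_0}$ (which satisfies $L_\sigma^{n_0}1=1$); and the non-concentration estimate of Proposition~\ref{prop:fed}, which converts the pointwise cancellation on $\hB$ into an integrated contraction. The target is a one-step inequality $\int u_{j+1}^2\,d\nu_\sigma\le\beta\int u_j^2\,d\nu_\sigma$ for some $\beta\in(0,1)$ independent of $j$ and $b$, which will then be iterated.

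The hypothesis $|v|_\alpha\le C_4|b|^\alpha|v|_\infty$ says exactly that the pair $(u_0,v_0):=(|v|_\infty,v)$ lies in $\cC_b$, where $u_0$ is the constant function. Inductively, set $\chi_j=\chi(b,u_j,v_j)$ from Corollary~\ref{cor:cancel} and define
\[
u_{j+1}=L_\sigma^{n_0}(\chi_j u_j),\qquad v_{j+1}=L_s^{n_0}v_j=L_s^{(j+1)n_0}v.
\]
By Lemma~\ref{lem:cone}, $(u_j,v_j)\in\cC_b$ for all $j\ge 0$, so $|L_s^{mn_0}v|^2\le u_m^2$ pointwise and $|\log u_j|_\alpha\le C_4|b|^\alpha$. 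Let $\nu_\sigma$ denote the $L_\sigma$-invariant probability measure; its density with respect to $\mu$ is bounded above and below uniformly for $|\sigma|<\eps$. Jensen gives $u_{j+1}^2\le L_\sigma^{n_0}(\chi_j^2u_j^2)$, and writing $\chi_j=1-\omega_j/C'$ yields $\chi_j^2\le 1-\alpha_0\omega_j$ with $\alpha_0=(2C'-1)/C'^2>0$; integrating and using the $L_\sigma$-invariance of $\nu_\sigma$,
\[
\int u_{j+1}^2\,d\nu_\sigma\le\int u_j^2\,d\nu_\sigma-\alpha_0\int\omega_j u_j^2\,d\nu_\sigma.
\]
The one-step contraction thus reduces to showing $\int\omega_j u_j^2\,d\nu_\sigma\ge\kappa\int u_j^2\,d\nu_\sigma$ for some $\kappa>0$ independent of $j$ and $b$.

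For the cancellation estimate, observe that $\omega_j$ is supported on $\range h_1\cup\range h_2$ and, on $\range h_m$, equals $\omega_i\circ F^{n_0}$ for the unique $i$ with $\type(B_i)=h_m$ and $F^{n_0}y\in B_i$; in particular $\omega_j\equiv 1$ on $\bigcup_i h_{\type(B_i)}(\hB_i)$. The adjoint identity $\int(g\circ F^{n_0})f\,d\nu_\sigma=\int g\cdot L_\sigma^{n_0}(f)\,d\nu_\sigma$, applied branch-by-branch with $f=\mathbf{1}_{\range h_m}u_j^2$ (so only the single branch $h_m$ contributes to the transfer sum), gives
\[
\int_{\range h_m}\omega_j u_j^2\,d\nu_\sigma=\sum_{\type(B_i)=h_m}\int\omega_i(z)\cdot L_\sigma^{n_0}(\mathbf{1}_{\range h_m}u_j^2)(z)\,d\nu_\sigma(z).
\]
Here $L_\sigma^{n_0}(\mathbf{1}_{\range h_m}u_j^2)(z)$ is the single transfer-weight term $W_{h_m}(z)\,u_j^2(h_m(z))$, and the crucial point is that it has log-H\"older seminorm bounded by $K|b|^\alpha$ with $K$ independent of $b$: the cone oscillation of $u_j^2\circ h_m$ is controlled by $2C_4|b|^\alpha|Dh_m|_\infty^\alpha\le 2C_4(C_1\rho_0^{n_0})^\alpha|b|^\alpha$, while the remaining distortion factors ($|\det Dh_m|$, $f_\sigma$, $e^{-\sigma r_{n_0}\circ h_m}$) contribute only $b$-independent H\"older oscillation. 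A Proposition~\ref{prop:fed}-type estimate applied to this pre-smoothed function then yields, after summing over $m=1,2$, the required uniform $\kappa$. Iterating gives $\int u_m^2\,d\nu_\sigma\le\beta^m|v|_\infty^2$, and converting back to $\mu$ (absorbing the resulting constant into a slightly larger $\beta$) completes the proof.

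The main obstacle is the cancellation estimate itself: the cone allows $u_j^2$ to oscillate by a factor $e^{O(|b|^\alpha)}$ across $Y$, so a direct application of Proposition~\ref{prop:fed} to $u_j^2$ would produce a constant $c_1=c_1(b)\to 0$ as $|b|\to\infty$. The remedy, which is the technical heart of the argument, is the \emph{pre-smoothing} by one application of $L_\sigma^{n_0}$ through a contracting inverse branch $h_m$: this compresses the $|b|^\alpha$-scale oscillation by the factor $|Dh_m|_\infty^\alpha\le(C_1\rho_0^{n_0})^\alpha$ into a $b$-independent one, so that Proposition~\ref{prop:fed} applies with a $b$-uniform constant. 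This is precisely why the cancellation set built from Lemma~\ref{lem:cancel} lives in $\range h_1\cup\range h_2$ rather than directly in $Y$.
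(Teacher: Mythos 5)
Your overall scheme (the cone iteration via Lemma~\ref{lem:cone} and Corollary~\ref{cor:cancel}, a Cauchy--Schwarz/Jensen step, a gain coming from $\chi\le\eta$ on the distinguished branches, and a Federer-type estimate) is the same as the paper's, which follows \cite[Lemma~2.13]{AraujoM16}; but two steps of your implementation contain genuine gaps. The first is the reference measure. You integrate against the $L_\sigma$-invariant probability $\nu_\sigma$ and assert that $d\nu_\sigma/d\mu$ is bounded above and below uniformly in $|\sigma|<\eps$. This is unjustified and false for $\sigma\neq0$: any such bound would give $\nu_\sigma\ll\mu$, and since $\nu_\sigma$ is $F$-invariant and $\mu$ is ergodic this would force $\nu_\sigma=\mu$; but $\nu_\sigma$ is the equilibrium state for the potential shifted by $-\sigma r$, and it coincides with $\mu$ only when $\sigma r$ is cohomologous to a constant, which is excluded under (UNI). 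Consequently your final step ``converting back to $\mu$'' is empty --- a bound on $\int u_m^2\,d\nu_\sigma$ gives no bound on $\int_Y|L_s^{mn_0}v|^2\,d\mu$, which is what the lemma asserts --- and your ``Proposition~\ref{prop:fed}-type estimate'' is likewise unavailable, since the proof of Proposition~\ref{prop:fed} rests on the two-sided bounds for $d\mu/d\Leb$, which $\nu_\sigma$ does not satisfy. The paper never leaves $\mu$: it proves the pointwise bound $u_{m+1}^2\le\xi(\sigma)\eta_1 w$ on $\hB$ and $u_{m+1}^2\le\xi(\sigma)w$ elsewhere, with $w=L_0^{n_0}(u_m^2)$ and $\xi(\sigma)\to1$ as $\eps\to0$, and then uses the exact identity $\int L_0^{n_0}(u_m^2)\,d\mu=\int u_m^2\,d\mu$. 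The uniform comparison of $L_\sigma^{n_0}$ with $L_0^{n_0}$ is precisely the step your choice of $\nu_\sigma$ was designed to bypass, and it cannot be bypassed this way.

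Second, even granting the measure issue, your extraction of the gain does not close. Because you apply Jensen in the form $u_{j+1}^2\le L_\sigma^{n_0}(\chi_j^2u_j^2)$, the gain at a point of $\hB$ is carried only by the single-branch term $W_{h_m}(z)u_j^2(h_mz)$ (with $W_{h_m}$ the normalised weight of $h_m$), and your adjoint/branch-by-branch computation yields at best $\int\omega_ju_j^2\,d\nu_\sigma\ge c\int_{\range h_1\cup\range h_2}u_j^2\,d\nu_\sigma$ (and even that presumes the balls of each type are dense enough for a Federer estimate). The missing comparison of $\int_{\range h_1\cup\range h_2}u_j^2$ with $\int_Yu_j^2$ is exactly where uniformity in $b$ is lost: the only global control is the cone bound $|\log u_j|_\alpha\le C_4|b|^\alpha$, which at unit scale gives a factor of order $e^{-2C_4|b|^\alpha}$, so no $b$-uniform $\kappa$ follows. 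The standard remedy, and what the paper does, is to apply Cauchy--Schwarz as $(L_\sigma^{n_0}(\chi u))^2\le L_\sigma^{n_0}(\chi^2)\cdot L_\sigma^{n_0}(u^2)$: on $\hB$ one has $L_\sigma^{n_0}(\chi^2)\le1-(1-\eta^2)\inf W_{h_m}=\eta_1<1$, a numerical factor multiplying the whole of $L_\sigma^{n_0}(u^2)(y)$, after which Proposition~\ref{prop:fed} is applied (with respect to $\mu$) to $w=L_0^{n_0}(u_m^2)$. Finally, your diagnosis of the ``main obstacle'' is off target: Proposition~\ref{prop:fed} is formulated exactly for functions with $|\log w|_\alpha\le K|b|^\alpha$ and its constant $c_1$ depends only on $K$, not on $b$, because the balls $\hB_i$, $B_i^*$ have radii of order $1/|b|$; so no pre-smoothing is needed for that purpose --- the real delicacies are the choice of reference measure and the placement of the gain, i.e.\ the two points above.
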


\begin{proof}
Define
$u_0\equiv1, v_0=v/|v|_\infty$ and inductively,
\[
u_{m+1}=L_\sigma^{n_0}(\chi_mu_m), \qquad v_{m+1}=L_s^{n_0}(v_m),
\]
where $\chi_m=\chi(b,u_m,v_m)$.
It is immediate from the definitions that $(u_0,v_0)\in\cC_b$, and it follows
from Lemma~\ref{lem:cone} that $(u_m,v_m)\in\cC_b$ for all $m$.
Hence inductively the $\chi_m$ are well-defined as in Corollary~\ref{cor:cancel}.

We proceed as in~\cite[Lemma~2.13]{AraujoM16} in the following steps.

\begin{itemize}

\parskip=-2pt
\item[(1)] It suffices to show that there exists $\beta\in(0,1)$ such that
$\int_Y u_{m+1}^2\,d\mu\le\beta\int u_m^2\,d\mu$ for all $m$.

\item[(2)] Define $w=L_0^{n_0}(u_m^2)$. Then
 \[
u_{m+1}^2(y)\le \begin{cases}
\xi(\sigma)\eta_1 w(y) & y\in\hB \\
\xi(\sigma) w(y) & y\in Y\setminus\hB 
\end{cases}
\]
where $\xi(\sigma)$ can be made as close to $1$ as desired by shrinking $\eps$.
Here, $\eta_1\in(0,1)$ is a constant independent of $v$, $m$, $s$, $y$.

\item[(3)]
The function $w:Y\to\R$ satisfies the hypotheses of Proposition~\ref{prop:fed}; 
consequently $\int_{\hB}w\,d\mu\ge c_1 \int_{Y\setminus\hB}w\,d\mu$.
This leads to the desired conclusion.
\end{itemize}

\vspace{-5ex}
\end{proof}

\begin{lemma}[$C^\alpha$ contraction] \label{lem:dolg}
Let $E'=\max\{16\pi/E,2\}$.
There exists $\eps\in(0,1)$, $\gamma\in(0,1)$ and $A>0$
such that $\|P_s^n\|_b\le \gamma^n$ for
all $s=\sigma+ib$, $|\sigma|<\eps$, $|b|\ge E'$, $n\ge A\log |b|$.
\end{lemma}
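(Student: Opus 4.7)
The plan is to prove the estimate first for the normalized operators $L_s^n$, and then deduce it for $P_s^n$. The transfer is straightforward from the iterated identity $P_s^n v = \lambda_\sigma^n\,f_\sigma\,L_s^n(v/f_\sigma)$: multiplication by $f_\sigma$ and $f_\sigma^{-1}$ has uniformly bounded operator norm on $(C^\alpha(Y),\|\cdot\|_b)$ for $|\sigma|<\eps$ (Proposition~\ref{prop:P}), and $|\lambda_\sigma|$ can be forced arbitrarily close to $\lambda_0=1$ by shrinking $\eps$, so any contraction $\|L_s^n\|_b \le \gamma_0^n$ yields $\|P_s^n\|_b \le \gamma^n$ with $\gamma$ slightly larger than $\gamma_0$.

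For the $L_s^n$ estimate I combine Lasota--Yorke (Lemma~\ref{lem:LY}) with the $L^2$ contraction (Lemma~\ref{lem:L2}). Given $v$ with $\|v\|_b\le1$, a bounded number of Lasota--Yorke iterations (using that one may take $C_4$ much larger than $C_3$) forces $|L_s^n v|_\alpha \le C_4|b|^\alpha |L_s^n v|_\infty$, so that Lemma~\ref{lem:L2} applies and yields $|L_s^{mn_0}v|_2^2 \le \beta^m|v|_\infty^2$. To promote this to an $|\cdot|_\infty$ bound I use the pointwise Cauchy--Schwarz inequality $|L_s^k w|^2 \le L_\sigma^k(|w|^2)$, which is valid because the moduli of the weights defining $L_s^k$ form a positive kernel summing to $L_\sigma^k 1\equiv1$. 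Combined with the standard spectral gap of $L_\sigma$ on $C^\alpha(Y)$, this produces
\[
|L_s^{mn_0+k}v|_\infty^2 \;\le\; C\beta^m|v|_\infty^2 \;+\; C\rho_1^k(1+|b|^\alpha)|v|_\infty^2,
\]
the second term arising from the bound $\||L_s^{mn_0}v|^2\|_\alpha \le C(1+|b|^\alpha)|v|_\infty^2$ obtained from Lasota--Yorke. Choosing $k$ of order $\log|b|$ plus a multiple of $m$ balances the two terms, giving $|L_s^{mn_0+k}v|_\infty \le C\beta^{m/2}|v|_\infty$, and a last application of Lasota--Yorke converts this into the desired $\|\cdot\|_b$ contraction.

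The main obstacle is precisely this $L^2$-to-$|\cdot|_\infty$ upgrade: the spectral-gap remainder carries a factor $(1+|b|^\alpha)$, and suppressing it requires $k$ of order $\log|b|$, which is exactly what forces $n\ge A\log|b|$ in the statement. The delicate point is choosing $m$ and $k$ simultaneously so that the $\beta^{m/2}$ geometric decay dominates uniformly in $|b|$ and the total length $n=mn_0+k+O(1)$ still yields an exponential rate $\gamma_0^n$; tracking this balance through to a clean bound is the bulk of the calculation.
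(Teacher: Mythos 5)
Your overall architecture --- the pointwise Cauchy--Schwarz bound $|L_s^k w|^2\le L_\sigma^k(|w|^2)$, the uniform spectral gap of $L_\sigma$ on $C^\alpha(Y)$ to upgrade the $L^2$ estimate to a sup-norm estimate at the cost of a term of size $(1+|b|^\alpha)$ times an exponentially small factor in $k$ (which is what forces $n\ge A\log|b|$), a final Lasota--Yorke step to recover the $\|\cdot\|_b$ norm, and the conjugation $P_s^n v=\lambda_\sigma^n f_\sigma L_s^n(f_\sigma^{-1}v)$ with $\eps$ shrunk so that $\lambda_\sigma$ is near $1$ --- is exactly the argument the paper invokes by citing \cite[Proposition~2.14, Corollary~2.15 and Theorem~2.16]{AraujoM16}. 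However, one step fails as stated: your entry into Lemma~\ref{lem:L2}. You claim that for $\|v\|_b\le1$ a bounded number of Lasota--Yorke iterations forces $|L_s^n v|_\alpha\le C_4|b|^\alpha|L_s^n v|_\infty$. Lemma~\ref{lem:LY} only bounds $|L_s^n v|_\alpha$ from above, by $C_3(1+|b|^\alpha)|v|_\infty+C_3\rho^n|v|_\alpha$; it provides no lower bound on $|L_s^n v|_\infty$, which can be arbitrarily small relative to $|v|_\infty$ (the entire Dolgopyat mechanism is precisely that cancellations shrink $|L_s^n v|_\infty$). For $v$ with tiny sup norm but $|v|_\alpha$ comparable to $1+|b|^\alpha$, the ratio $|L_s^n v|_\alpha/|L_s^n v|_\infty$ can be arbitrarily large for every $n$, so the hypothesis of Lemma~\ref{lem:L2} is never verified and your argument does not start for general $v$ in the unit $\|\cdot\|_b$-ball. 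The naive dichotomy ``either the sup norm has already dropped or the ratio condition holds'' is also not sufficient by itself, since the lemma requires contraction that is exponentially small in the block length $\asymp\log|b|$, uniformly in $b$, not merely a drop by a fixed factor after boundedly many steps.

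The standard repair, and the one underlying the cited proof, is to go inside the proof of Lemma~\ref{lem:L2} rather than use it as a black box: for any $v$ with $\|v\|_b\le1$, the pair $(u_0,v_0)=(1,v)$ lies in the cone $\cC_b$, because $|v|_\infty\le1$ and $|v|_\alpha\le 1+|b|^\alpha\le 2|b|^\alpha\le C_4|b|^\alpha$ for $|b|\ge1$ (enlarging $C_4$ if necessary, which is harmless, as you note). Running the same iteration $u_{m+1}=L_\sigma^{n_0}(\chi_m u_m)$, $v_{m+1}=L_s^{n_0}v_m$ via Corollary~\ref{cor:cancel} and Lemma~\ref{lem:cone} then yields $\int_Y|L_s^{mn_0}v|^2\,d\mu\le\beta^m\|v\|_b^2$ with no ratio hypothesis at all. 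With this substitution, the remainder of your outline --- the bound $\||L_s^{mn_0}v|^2\|_\alpha\le C(1+|b|^\alpha)\|v\|_b^2$ from Corollary~\ref{cor:LY}, the choice of $k$ of order $\log|b|$ plus a multiple of $m$, the final Lasota--Yorke step, and the transfer from $L_s$ to $P_s$ --- goes through and coincides with the paper's cited argument.
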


\begin{proof}
This is unchanged from~\cite[Proposition~2.14, Corollary~2.15 and Theorem~2.16]{AraujoM16}.
\end{proof}

\begin{pfof}{Theorem~\ref{thm:semiflow}}
This is identical to~\cite[Section~2.7]{AraujoM16}.
We note that there is a typo in the statement of~\cite[Lemma~2.23]{AraujoM16}
where $|b|\le D'$ should be $|b|\ge D'$ (twice).
Also, for the second statement of~\cite[Proposition~2.18]{AraujoM16} it would be more natural to argue that 
\begin{align*}
\int_Y e^{\eps r}\,d\Leb
 & =\sum_{h\in\cH}\int_{h(Y)}e^{\eps r}\,d\Leb
\\ & =\sum_{h\in\cH}\int_Y e^{\eps r\circ h}|\det Dh|\,d\Leb
 \le \Leb(Y)\sum_{h\in\cH} e^{\eps |r\circ h|_\infty}|\det Dh|_\infty
\end{align*}
 which is finite by condition~(iv).
Hence $\int_Y e^{\eps r}\,d\mu<\infty$ 
by boundedness of $d\mu/d\Leb$.~
\end{pfof}

\subsection{$C^{1+\alpha}$ uniformly hyperbolic skew product flows}
\label{sec:flow}

Let $X=Y\times Z$ where $Y$ is an open ball of diameter $1$ with Euclidean metric $d_Y$ and $(Z,d_Z)$ is a compact Riemannian manifold.
Define the metric $d((y,z),(y',z'))=d_Y(y,y')+d_Z(z,z')$ on $X$.
Let $f(y,z)=(Fy,G(y,z))$ where $F:Y\to Y$, $G:X\to Z$ are $C^{1+\alpha}$.

We say that $f:X\to X$ is a \emph{$C^{1+\alpha}$ uniformly hyperbolic skew product}
if 
	$F:Y\to Y$ is a $C^{1+\alpha}$ uniformly expanding map satisfying conditions~(i) and~(ii) as in Section~\ref{sec:semiflow}, with absolutely continuous invariant probability measure $\mu$, and moreover
\begin{itemize}
	\item[(v)]  There exist constants $C>0$, $\gamma_0\in(0,1)$ such that
$d(f^n(y,z),f^n(y,z'))\le C\gamma_0^n d(z,z')$ for all $y\in Y$, $z,z'\in Z$.
\end{itemize}

Let $\pi^s:X\to Y$ be the projection $\pi^s(y,z)=y$.  This defines a semiconjugacy between $f$ and $F$ and there is a unique $f$-invariant ergodic probability measure $\mu_X$ on $X$ such that
$\pi^s_*\mu_X=\mu$.

Suppose that $r:\bigcup_{U\in\cP}U\to\R^+$ is $C^1$ on partition elements $U$ with
$\inf r>0$.  Define $r:X\to\R^+$ by setting
$r(y,z)=r(y)$.
Define the suspension $X^r=\{(x,u)\in X\times\R:0\le u\le r(x)\}/\sim$
where $(x,r(x))\sim(fx,0)$.  The suspension flow
$f_t:X^r\to X^r$ is given by $f_t(x,u)=(x,u+t)$ computed modulo identifications,
 with ergodic invariant probability measure 
$\mu_X^r=(\mu_X\times{\rm Lebesgue})/\bar r$.

We say that $f_t$ is a \emph{$C^{1+\alpha}$ uniformly hyperbolic skew product flow} provided
$f:X\to X$ is a $C^{1+\alpha}$ uniformly hyperbolic skew product as above, and
$r:Y\to\R^+$ satisfies conditions~(iii) and~(iv) as in Section~\ref{sec:semiflow}.
If $F:Y\to Y$ and $r:Y\to\R^+$ satisfy condition (UNI) from Section~\ref{sec:semiflow}, then we say that the skew product flow $f_t$ satisfies (UNI).

Define $F_\alpha(X^r)$ to consist of $L^\infty$ functions
$v:X^r\to \R$ such that 
$\|v\|_\alpha=|v|_\infty+|v|_\alpha<\infty$ where
\[
|v|_\alpha=\sup_{(y,z,u)\neq(y',z',u)}\frac{|v(y,z,u)-v(y',z',u)|}{d((y,z),(y',z'))^\alpha}.
\]
Define $F_{\alpha,k}(X^r)$ to consist of functions
with $\|v\|_{\alpha,k}=\sum_{j=0}^k \|\partial_t^jv\|_\alpha<\infty$ where $\partial_t$ denotes differentiation along the flow direction.

We can now state the main result in this section.
Given $v\in L^1(X^r)$, $w\in L^\infty(X^r)$, define the correlation function
\[
\rho_{v,w}(t)=\int v\,w\circ f_t\,d\mu_X^r
-\int v\,d\mu_X^r \int w\,d\mu_X^r.
\]

\begin{thm} \label{thm:flow}
Assume that $f_t:X\to X$ is a $C^{1+\alpha}$ hyperbolic skew product flow
satisfying the (UNI) condition.
Then there exist constants $c,C>0$ such that
\[
|\rho_{v,w}(t)|\le C e^{-c t}\|v\|_{\alpha,1}\|w\|_{\alpha,1},
\]
for all $t>0$ and all 
$v,\,w\in F_{\alpha,1}(X^r)$ (alternatively all
$v\in F_{\alpha,2}(X^r)$, $w\in F_\alpha(X^r)$). 
\end{thm}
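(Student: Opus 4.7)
The plan is to reduce Theorem~\ref{thm:flow} to Theorem~\ref{thm:semiflow} via a standard stable-leaf approximation, following \cite{AGY06} and \cite{AraujoM16}. The projection $\pi^s:X^r\to Y^r$, $(y,z,u)\mapsto(y,u)$, intertwines $f_t$ with $F_t$ and pushes $\mu_X^r$ forward to $\mu^r$, so any correlation built from observables that factor through $\pi^s$ can be evaluated on the semiflow via Theorem~\ref{thm:semiflow}.

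First I would construct for each $w\in F_{\alpha,1}(X^r)$ and each $s>0$ a comparison function $\bar w_s\in F_{\alpha,1}(Y^r)$ such that
\[
|w\circ f_s-\bar w_s\circ\pi^s|_\infty\le C\gamma_0^{\alpha s}\|w\|_{\alpha,1},
\qquad
\|\bar w_s\|_{\alpha,1}\le C\|w\|_{\alpha,1},
\]
with a constant $C$ independent of $s$ and $w$. The construction selects a measurable section of $\pi^s$ and pulls $w\circ f_s$ back to $Y^r$ along it; condition~(v) controls the contraction in the fibre direction, while the $\partial_t$-derivative built into $\|\cdot\|_{\alpha,1}$ absorbs the discrepancy coming from the fact that $\pi^s$ does not commute with the flow on the chosen section. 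The same device produces $\bar v\in F_{\alpha,1}(Y^r)$ approximating $v$ in the same sense.

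With these approximations in hand, I would split $t=s+t'$ with $s=\eta t$ for a small $\eta>0$, use flow invariance of $\mu_X^r$ to rewrite $\int v\,w\circ f_t\,d\mu_X^r$ in the form $\int v\cdot(w\circ f_s)\circ f_{t'}\,d\mu_X^r$, substitute the approximations, and reduce to a semiflow correlation $\int\bar v\cdot\bar w_s\circ F_{t'}\,d\mu^r$ plus error terms of size $O(\gamma_0^{\alpha s}\|v\|_{\alpha,1}\|w\|_{\alpha,1})$. Theorem~\ref{thm:semiflow} bounds the semiflow correlation by $Ce^{-ct'}\|\bar v\|_{\alpha,1}\|\bar w_s\|_{\alpha,1}\le Ce^{-ct'}\|v\|_{\alpha,1}\|w\|_{\alpha,1}$; combining the two error contributions and optimising $\eta$ yields the exponential decay claimed in Theorem~\ref{thm:flow}. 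The alternative statement with $v\in F_{\alpha,2}(X^r)$, $w\in F_\alpha(X^r)$ follows by integrating by parts in the flow direction, as in \cite{AraujoM16}.

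The main obstacle is the uniform-in-$s$ norm bound $\|\bar w_s\|_{\alpha,1}\le C\|w\|_{\alpha,1}$. Naively the H\"older constant of $w\circ f_s$ in the $y$-variable grows with $s$, but the mismatch between $f_s$ and its projection onto the semiflow concentrates in the $u$-direction and is therefore controlled by a single $\partial_t$-derivative of $w$. This is exactly why the hypothesis is $F_{\alpha,1}$ rather than $F_\alpha$, and the verification is essentially unchanged from the one-dimensional-unstable case in \cite{AraujoM16}, since only condition~(v) and the flow-direction regularity enter into that step; no new issue arises from having higher-dimensional $Y$.
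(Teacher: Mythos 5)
Your overall strategy is the same as the paper's: the paper proves Theorem~\ref{thm:flow} by invoking \cite[Section~4]{AraujoM16} verbatim, and that argument is precisely the reduction you describe, namely approximating observables by functions constant along stable fibres so that the correlation is transported to the quotient semiflow and Theorem~\ref{thm:semiflow} applies.

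However, the two displayed estimates on which your reduction rests are not correct as stated, and the second one is a genuine gap. (i) Condition~(v) gives contraction in the $Z$-direction per application of $f$, not per unit of flow time; since $r$ is unbounded (it only has exponential tails by condition~(iv)), a point may undergo very few returns in flow time $s$, so no bound of the form $|w\circ f_s-\bar w_s\circ\pi^s|_\infty\le C\gamma_0^{\alpha s}\|w\|_{\alpha,1}$ can hold uniformly. The error must instead be estimated in $L^1(\mu_X^r)$, splitting off the set where fewer than $\epsilon s$ returns occur, whose measure is exponentially small precisely because of~(iv); this suffices for the correlation bound since the other factor is bounded in sup norm. (ii) The uniform bound $\|\bar w_s\|_{\alpha,1}\le C\|w\|_{\alpha,1}$ is false: on a section $z=z_0$, the $Y$-coordinate of $f_s(y,z_0,u)$ is $F^{N}y$ with $N$ the number of returns by time $s$, and $F^{N}$ expands distances by up to $C_1^{-1}\rho_0^{-N}$, so the spatial H\"older seminorm of $\bar w_s$ grows roughly like $e^{C's}$; the $\partial_t$-derivative in $\|\cdot\|_{\alpha,1}$ controls only the flow-direction mismatch and cannot absorb this expansion in $y$. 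The standard repair, which is exactly what \cite{AGY06,AraujoM16} do, is to accept $\|\bar w_s\|_{\alpha,1}\le Ce^{C's}\|w\|_{\alpha,1}$ and then choose $s=\eta t$ with $\eta$ so small that $e^{2C'\eta t}\,e^{-c(1-2\eta)t}$ still decays exponentially; with that modification (and the $L^1$ form of the approximation error) your argument closes and coincides with the cited proof.
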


\begin{proof}
This is unchanged from~\cite[Section~4]{AraujoM16}.
\end{proof}

\section{Proof of Theorem~\ref{thm:A}}
\label{sec:proof}

We return to the situation of Section~\ref{sec:induce}, so $\Lambda\subset M$ is a uniformly hyperbolic attractor for a $C^{1+\alpha}$ flow, $\alpha\in(0,1)$, defined on a compact Riemannian manifold.
Define the open unstable disk $Y=W^u_\delta(p)$ 
with discrete return time $R:Y\to\Z^+$ and induced map $F=\pi\circ \phi_R:Y\to Y$ as 
in Theorem~\ref{thm:induce}.

Under smoothness assumptions on holonomies, we verify the conditions on the suspension flow $f_t$ in Section~\ref{sec:expdecay} and obtain Theorem~\ref{thm:A} as an easy consequence.

\begin{prop} \label{prop:app1}
Suppose that the centre-stable holonomies are $C^{1+\alpha}$. (In particular,
$\pi:\hcD\to\cD$ is $C^{1+\alpha}$.)
Then (after shrinking $\delta_0$ in Section~\ref{sec:induce} if necessary) $F$ is a $C^{1+\alpha}$ uniformly expanding map.
\end{prop}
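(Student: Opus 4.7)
The plan is to verify conditions (i) and (ii) of Section~\ref{sec:semiflow} for $F$. On any partition element $U\in\cP$ with $R|_U=n$, we have $F|_U=\pi\circ\phi_n|_U$, which is $C^{1+\alpha}$ as a composition of the $C^{1+\alpha}$ time-$n$ flow map with the now $C^{1+\alpha}$ centre-stable holonomy $\pi$. Since both factors restrict to diffeomorphisms between unstable disks (in particular $\phi_n(U)\subset\hcD$ is an unstable disk, and $\pi$ maps it diffeomorphically onto $Y\subset\cD$), $F|_U\colon U\to Y$ is a $C^{1+\alpha}$ diffeomorphism, upgrading the bi-H\"older bijection of Remark~\ref{rmk:induce}.

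For (i), I factor the inverse branch derivative as $Dh=(D\phi_n|_{E^u})^{-1}(D\pi|_{E^u})^{-1}$. The adapted-norm bound yields $|(D\phi_n|_{E^u})^{-1}|\le\lambda^n$, while $M:=\sup\|(D\pi|_{E^u})^{-1}\|$ can be made arbitrarily close to $1$ by shrinking $\delta_0$: as $\delta_0\to 0$ the unstable leaves inside $\hcD$ collapse onto $\cD$ and $\pi$ tends to the identity along them. Since $\lambda<1<1/\lambda$, we may shrink $\delta_0$ so that $M\lambda<1$; then first-level branches satisfy $|Dh|_\infty\le M\lambda^{R|_U}$, and composing $k$ of them gives $|Dh|_\infty\le M^k\lambda^{R_k}\le(M\lambda)^k$ for every $h\in\cH_k$ (using $R_k\ge k$), which is (i) with $\rho_0=M\lambda$.

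For (ii), I decompose
\[
\log|\det Dh|=-(\log J_\pi)\circ(\phi_n\circ h)-(\log J_{\phi_n})\circ h,
\]
where $J_\pi=|\det D\pi|_{E^u}|$ and $J_{\phi_n}=|\det D\phi_n|_{E^u}|$. The first summand has uniformly bounded $\alpha$-H\"older seminorm since $\log J_\pi$ is $\alpha$-H\"older on $\hcD$ ($\pi\in C^{1+\alpha}$) and $\phi_n\circ h=(\pi|_{\phi_n U})^{-1}$ is Lipschitz with constant at most $M$, uniformly in $n$. For the second, telescope $\log J_{\phi_n}=\sum_{j=0}^{n-1}(\log J_{\phi_1})\circ\phi_j$ and use that $\log J_{\phi_1}$ is $\alpha$-H\"older; since $\phi_n(U)\subset\hcD$ is an unstable disk, backward contraction gives $d(\phi_jh(y_1),\phi_jh(y_2))\le C\lambda^{n-j}d(y_1,y_2)$, so the telescoping sum is dominated by the convergent geometric series $C'\sum_{j=0}^{n-1}\lambda^{\alpha(n-j)}d(y_1,y_2)^\alpha$, bounded independently of $n$.

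The principal obstacle is the uniformity of these distortion estimates over the unbounded return times. This reduces to arranging, after a further shrinking of $\delta_0$, that $\diam\hcD<4\delta_0$; backward contraction on the unstable disk $\phi_n(U)$ then propagates this smallness down to time~$0$, keeping each orbit pair within distance $4\delta_0$ throughout $0\le j\le n$ and so validating~\eqref{eq:bdd} along the entire orbit. The same shrinking of $\delta_0$ simultaneously supplies the control on $M$ required in condition~(i).
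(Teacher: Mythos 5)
Your proposal is correct and follows essentially the same route as the paper: the expansion bound comes from factoring $DF=D\pi_U\cdot D\phi_n$, using $\lambda^{-n}\ge\lambda^{-1}$ (equivalently $R_k\ge k$) and shrinking $\delta_0$ so that $\lambda\sup_U|(D\pi_U)^{-1}|_\infty<1$, exactly as in the paper's proof. The only difference is that you write out the telescoping/bounded-distortion argument for condition (ii), which the paper dismisses as ``the standard distortion estimate''.
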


\begin{proof}
As in Remark~\ref{rmk:induce}, it is immediate that $F|_U:U\to Y$ is a $C^{1+\alpha}$ diffeomorphism for all $U\in\cP$.
Let $h:Y\to U$ be an inverse branch with
$R|_U=n$, and define $\pi_U=\pi|_{\phi_n(U)}:\phi_n(U)\to\cD$.  Then 
\[
\lambda^{-1}|v|\le \lambda^{-n}|v|\le |D\phi_n(x)v|\le |(D\pi_U)^{-1}|_\infty |DF(x)v|
\]
for all $x\in U$, $v\in T_xY$. 
Hence $|Dh|_\infty\le \rho_0$ where $\rho_0=\lambda\sup_U|(D\pi_U)^{-1}|_\infty$.
Shrinking $\delta_0$, we can ensure that $\rho_0<1$.
In particular, condition~(i) in Section~\ref{sec:semiflow} holds (with $C_1=1$).
Condition~(ii) is the standard distortion estimate.
\end{proof}

In the remainder of this section, we suppose moreover that the stable holonomies are $C^{1+\alpha}$. 
Shrink $\delta_0\in(0,1)$ as in Proposition~\ref{prop:app1} and
shrink $\delta_1\in(0,\delta_0)$ so that 
$\phi_t(W^s_{\delta_1}(y))\subset W^s_{\delta_0}(\phi_ty)$ for all $t>0$, $y\in\Lambda$.
Recall that $\cD=W^u_{\delta_0}(p)$ and
\[
\hcD=\bigcup_{y\in\cD}W^{cs}_{\delta_0}(y)
=\bigcup_{|t|<\delta_0}\phi_t \Big(
\bigcup_{y\in \cD}W^s_{\delta_0}(y)\Big).
\]
The projection $\pi^s:\bigcup_{y\in \cD}W^s_{\delta_0}(y)\to \cD$ given by $\pi^s|W^s_{\delta_0}(y)\equiv y$ is
$C^{1+\alpha}$. Moreover, 
$\pi=\pi^s\circ \phi_{r_0}$ where 
$\phi_{r_0}:\hcD\to\bigcup_{y\in \cD}W^s_{\delta_0}(y)$ and $r_0:\hcD\to (-\delta_0,\delta_0)$ is $C^{1+\alpha}$.
Define $r=R+r_0$ on $Y$. 
The choice $\delta_0<1$ ensures that $\inf r\ge 1-\delta_0>0$.
Define the corresponding semiflow $F_t:Y^r\to Y^r$.

\begin{prop} \label{prop:app2}
$F_t:Y^r\to Y^r$ is a $C^{1+\alpha}$ uniformly expanding semiflow.
\end{prop}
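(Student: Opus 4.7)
The plan is to verify the two remaining conditions (iii) and (iv) from Section~\ref{sec:semiflow} for the roof $r = R + r_0$; conditions (i) and (ii) on the base map $F$ are exactly the content of Proposition~\ref{prop:app1}. Since $R$ is constant on each partition element $U \in \cP$, the smoothness and boundedness of $r_0$ on $\hcD$ are the only ingredients beyond what was already used for $F$.

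For (iii), fix an inverse branch $h:Y\to U$ with $R|_U = n$, so that $r\circ h = n + r_0\circ \phi_n \circ h$ and hence $D(r\circ h)(y) = Dr_0(\phi_n h(y))\cdot D(\phi_n\circ h)(y)$. The factor $Dr_0$ is uniformly bounded because $r_0$ is $C^{1+\alpha}$ on the compact set $\overline{\hcD}$. To bound $D(\phi_n\circ h)$, differentiate the identity $\pi \circ \phi_n \circ h = F\circ h = \mathrm{id}_Y$: this gives $D\pi(\phi_n h(y))\bigl|_{E^u}\circ D(\phi_n \circ h)(y) = I$. Since $\pi$ is a $C^{1+\alpha}$ submersion whose kernel is tangent to the centre-stable direction, its restriction to the unstable direction is invertible with uniformly bounded inverse (after possibly shrinking $\delta_0$, using uniform transversality of $E^u$ and $E^{cs}$). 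This yields a uniform bound on $|D(\phi_n\circ h)|_\infty$, independent of $h$, and so condition (iii) holds.

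For (iv), note that $|r\circ h|_\infty \le n+\delta_0$ whenever $R|_U = n$, since $|r_0| < \delta_0$. Combining the distortion estimate \eqref{eq:bdd} along $\phi_n$ with the $C^{1+\alpha}$ regularity of $\pi$ gives uniform bounded distortion for $F$, so there is a constant $C$ with $|\det Dh|_\infty \le C\,\Leb(U)/\Leb(Y)$ for every inverse branch. Grouping branches by the value of $R$,
\[
\sum_{h\in\cH} e^{\eps|r\circ h|_\infty}|\det Dh|_\infty \le \frac{Ce^{\eps\delta_0}}{\Leb(Y)}\sum_{n\ge 1} e^{\eps n}\Leb(\{R=n\}).
\]
By Theorem~\ref{thm:induce}(i), $\Leb(\{R=n\}) = O(\gamma^n)$, so choosing $\eps>0$ with $e^{\eps}\gamma<1$ makes the series converge, giving (iv).

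The only mildly delicate point is the uniform bound on $(D\pi|_{E^u})^{-1}$ used in (iii); this is where the $C^{1+\alpha}$ smoothness of centre-stable holonomies is genuinely needed, as opposed to mere H\"older continuity. Everything else — bounded distortion for $F$, exponential tails of $R$, and $C^{1+\alpha}$ regularity of $r_0$ — has already been established.
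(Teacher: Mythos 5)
Your proposal is correct and follows essentially the same route as the paper: conditions (i)--(ii) are quoted from Proposition~\ref{prop:app1}, condition (iii) is obtained by bounding $D(r\circ h)$ through the uniform bound on $Dr_0$ and on the inverse of $D\pi$ restricted to unstable directions (the same quantity $\sup_U|(D\pi_U)^{-1}|_\infty$ already used in Proposition~\ref{prop:app1}), and condition (iv) follows from bounded distortion, $|r_0|<\delta_0$, and the exponential tails of $R$ from Theorem~\ref{thm:induce}(i). Your level-set summation $\sum_n e^{\eps n}\Leb(\{R=n\})$ is just the paper's $\int_Y e^{\eps R}\,d\Leb$ written out, so there is no substantive difference.
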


\begin{proof}
By Proposition~\ref{prop:app1}, $F$ is a $C^{1+\alpha}$ uniformly expanding map. In particular, conditions~(i) and~(ii) are satisfied.

Notice that
$F=\pi^s\circ \phi_r$ where $r=R+r_0$ is $C^{1+\alpha}$ on partition elements $U\in\cP$. 
Since $Dr=Dr_0$ on partition elements, it is immediate that $\sup_{h\in\cH}|D(r\circ h)|_\infty
\le |D r_0|_\infty\sup_{h\in\cH}|Dh|_\infty\le \rho_0|Dr_0|_\infty<\infty$
verifying condition~(iii) on~$r$.
Recall that $\Leb(R>n)=O(\gamma^n)$ for some $\gamma\in(0,1)$, so we can choose $\eps>0$ such that $\int_Y e^{\eps R}\,d\Leb<\infty$.
Condition (ii) ensures that
$|\det Dh|_\infty\le (\Leb Y)^{-1}e^{C_1}\Leb(\range h)$ for all $h\in\cH$. Hence 
$\sum_{h\in\cH}e^{\eps |r\circ h|_\infty}|\det Dh|_\infty\ll 
\sum_{h\in\cH}e^{\eps |R\circ h|_\infty}\Leb(\range h)=\int_Y e^{\eps R}\,d\Leb<\infty$
verifying condition~(iv) on $r$.
\end{proof}

We now make a $C^{1+\alpha}$ change of coordinates so that $\hcD$ is identified with $\cD\times W_{\delta_0}^{s}(p)\times(-\delta_0,\delta_0)$ where $\{y\}\times W_{\delta_0}^{s}(p)$
is identified with $W_{\delta_0}^{s}(y)$ for all $y\in\cD$ and $(-\delta_0,\delta_0)$ is the flow direction.
Let $X=Y\times Z$ where $Z= W_{\delta_0}^{s}(p)$ and define $r:X\to(0,\infty)$
by $r(y,z)=r(y)$. Also, define $f=\phi_r:X\to X$ 
and the corresponding suspension flow $f_t:X^r\to X^r$ 

\begin{prop} \label{prop:app3}
$f_t:X^r\to X^r$ is a $C^{1+\alpha}$ uniformly hyperbolic skew product flow.
\end{prop}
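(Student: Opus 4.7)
The plan is to check each clause of the definition of a $C^{1+\alpha}$ uniformly hyperbolic skew product flow. Conditions (i) and (ii) on the base map $F:Y\to Y$ are exactly the content of Proposition~\ref{prop:app1}. Conditions (iii) and (iv) on the roof function are exactly Proposition~\ref{prop:app2}, and since $r:X\to\R^+$ is defined to depend only on the base coordinate $y$, they transfer verbatim from $Y$ to $X$. Hence only the skew-product form of $f$, its $C^{1+\alpha}$ regularity, and the stable contraction condition (v) require verification.

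For the skew-product form $f(y,z)=(Fy,G(y,z))$, I would work in the $C^{1+\alpha}$ chart on $\hcD\cong\cD\times Z\times(-\delta_0,\delta_0)$ produced by the change of coordinates. By construction, $r(y)=R(y)+r_0(\phi_Ry)$ is chosen so that $\phi_{r(y)}y$ lands on the zero-time slice $\bigcup_{w\in\cD}W^s_{\delta_0}(w)$ at unstable coordinate $Fy$. Since any $(y,z)\in X$ lies on the same local stable leaf as $y$, and since $\phi_t$ preserves stable leaves, the point $\phi_{r(y)}(y,z)$ lies on the stable leaf of $\phi_{r(y)}y$, namely the fiber $\{Fy\}\times Z$. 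Hence the first coordinate of $f(y,z)$ is $Fy$, and one defines $G(y,z)$ to be the second coordinate. The $C^{1+\alpha}$ regularity of $f$ (and of $G$) follows from the $C^{1+\alpha}$ regularity of $\phi_t$, of the chart on $\hcD$ (guaranteed by the assumed $C^{1+\alpha}$ stable holonomies), and of $r$ on partition elements.

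For condition (v), the decisive observation is that $r$ depends only on the base coordinate, so the iterated return time $r_n(y)=\sum_{j=0}^{n-1}r(F^jy)$ is common to $(y,z)$ and $(y,z')$ for any two points in a fixed fiber. Therefore $f^n(y,z)=\phi_{r_n(y)}(y,z)$ and $f^n(y,z')=\phi_{r_n(y)}(y,z')$ are reached using a single flow time. The two points lie on a common local stable leaf; the Euclidean distance $d_Z$ on $Z$ is bi-Lipschitz equivalent to stable distance in $M$ on compact sets (again via the $C^{1+\alpha}$ chart), and $\phi_t$ contracts stable distance by $\lambda^t$ with $r_n(y)\ge n\inf r\ge n(1-\delta_0)>0$. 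Thus $d(f^n(y,z),f^n(y,z'))\le C\lambda^{n(1-\delta_0)}d(z,z')$, which gives (v) with $\gamma_0=\lambda^{1-\delta_0}<1$.

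The only nontrivial step is confirming that the chart on $\hcD$ genuinely straightens out the local product structure so that $f$ takes the desired skew-product form with a $C^{1+\alpha}$ fiber component $G$; this is precisely where the hypothesis of $C^{1+\alpha}$ stable holonomies enters. Everything else is a routine combination of Propositions~\ref{prop:app1} and~\ref{prop:app2} with the standard uniform hyperbolicity estimates for the flow.
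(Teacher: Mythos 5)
Your proof is correct and follows essentially the same route as the paper: the paper's (very terse) argument likewise reduces conditions (i)--(iv) to Propositions~\ref{prop:app1} and~\ref{prop:app2}, reads off the skew-product form $f(y,z)=(Fy,G(y,z))$ with $G$ of class $C^{1+\alpha}$ from the $C^{1+\alpha}$ chart straightening the stable foliation, and obtains condition~(v) from the exponential contraction along stable leaves, exactly as you do (your explicit use of $f^n(y,z)=\phi_{r_n(y)}(y,z)$ with $r_n\ge n\inf r$ just spells out the paper's one-line justification).
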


\begin{proof}
Note that $\pi^s(X)=Y$ and $\pi^s(y,z)=y$.
Also, $f(y,z)=(Fy,G(y,z))$ where
$G:X\to Z$ is $C^{1+\alpha}$. Since $Z$ corresponds to the exponential contracting stable foliation, condition~(v) in Section~\ref{sec:flow} is satisfied.
Hence $f:X\to X$ is a $C^{1+\alpha}$ uniformly hyperbolic skew product
and the corresponding suspension flow $f_t:X^r\to X^r$ 
is a $C^{1+\alpha}$ uniformly hyperbolic skew product flow.
\end{proof}

Next we recall the standard argument that joint nonintegrability implies (UNI) in the current situation. 
(Similar arguments are given for instance in~\cite[Section~3]{AMV15} and~\cite[Section~5.3]{M18}.)

Joint nonintegrability is defined in terms of the temporal distortion function.
To define this intrinsically (independently of the inducing scheme) we have to introduce the first return time $\tau:X\to\R^+$ and the Poincar\'e map
$g:X\to X$ given by
\[
\tau(x)=\inf\{t>0:\phi_t(x)\in X\},
\qquad
g(x)=\phi_{\tau(x)}(x).
\]
Note that $\tau$ is constant along stable leaves by the choice of $X$.

For $x_1,x_2\in X$, define the local product
$[x_1,x_2]$ to be the unique intersection point of $W^u(x_1)\cap W^s(x_2)$.
The \emph{temporal distortion function} $D$ is defined to be
\[
D(x_1,x_2)
  =\sum_{j=-\infty}^\infty \big\{ \tau(g^jx_1)
-\tau(g^j[x_1,x_2]) -\tau(g^j[x_2,x_1])+\tau(g^jx_2)\big\}
\]
at points $x_1,x_2\in X$.
The stable and unstable bundles are jointly integrable if and only if $D\equiv0$.

\begin{lemma} \label{lem:D}
Joint nonintegrability of the stable and unstable bundles implies (UNI).
\end{lemma}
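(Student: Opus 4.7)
The plan is to invoke the standard correspondence between joint nonintegrability (equivalently, non-vanishing of the temporal distortion function $D$) and the condition (UNI), following the arguments of \cite[Section~3]{AMV15} and \cite[Section~5.3]{M18}, adapted to the present inducing scheme.

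First I would convert the hypothesis $D\not\equiv 0$ into pointwise non-vanishing of $\psi_{h_1,h_2}$. Pick $x_1,x_2\in X$ with $D(x_1,x_2)\ne 0$. Using the local product structure and the density of $\bigcup_{i\ge 1}\phi_{-i}p$ in $\Lambda$, for $n_0$ large one constructs a pair $h_1,h_2\in \cH_{n_0}$ of inverse branches of $F^{n_0}$ and a base point $y_0\in Y$ such that $h_1(y_0),h_2(y_0)$ shadow the orbit segments entering the truncated sum defining $D(x_1,x_2)$. The structure of the series defining $D$ (with its cancellations and exponentially decaying tails coming from hyperbolic contraction of the stable and unstable leaves) then gives
\[
\psi_{h_1,h_2}(y_0)=r_{n_0}(h_1y_0)-r_{n_0}(h_2y_0)=D(x_1,x_2)+O(\rho_0^{n_0})\ne 0.
\]

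Second, since $D(x,x)\equiv 0$, non-vanishing of $D(x_1,x_2)$ forces $D(\cdot,x_2)$ to be non-constant along $W^u(x_1)$; hence there is a unit vector $u\in E^u_{x_1}$ with $\partial_u D(x_1,x_2)\ne 0$. Via the correspondence between $x_1$ and $y_0$ induced by $\pi^s$ and the local product, this produces a direction $\ell(y_0)\in T_{y_0}Y$ and a constant $E>0$ with $|D\psi_{h_1,h_2}(y_0)\cdot\ell(y_0)|\ge 2E$. To globalise, I would use transitivity and density of orbits on $\Lambda$ to realise the same construction around every $y\in Y$, pass to a sufficiently large common depth, and patch the resulting local pairs into a single pair $(h_1,h_2)\in\cH_{n_0}$ together with a continuous unit vector field $\ell:Y\to\R^m$ satisfying $|D\psi_{h_1,h_2}(y)\cdot\ell(y)|\ge E$ for all $y\in Y$.

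The main obstacle is this last globalisation step: the pointwise non-vanishing of the derivative is routine once Steps~1 and~2 are in place, but producing a \emph{single} pair of branches, together with a globally continuous unit vector field $\ell$ and a uniform positive constant $E$, is delicate. It rests on the uniform bound (iii$_1$), which keeps $|D(r_{n_0}\circ h)|_\infty\le C_2$ independently of $n_0$ and thus prevents $D\psi_{h_1,h_2}$ from degenerating as the depth grows, combined with a careful compactness-and-concatenation argument to align the local choices of direction field into a continuous $\ell$ without loss in the lower bound.
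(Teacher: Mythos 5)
There is a genuine gap at the step you yourself flag as the ``main obstacle'', and it is not a technical afterthought: it is essentially the whole content of the lemma. The (UNI) condition demands a \emph{single} pair $h_1,h_2\in\cH_{n_0}$, a \emph{continuous} unit vector field $\ell$, and a lower bound $|D\psi_{h_1,h_2}(y)\cdot\ell(y)|\ge E$ valid for \emph{every} $y\in Y$. Your local construction (shadowing the orbits entering $D(x_1,x_2)$) can at best produce, for each base point, some pair of branches and some direction with a nonzero derivative \emph{near that point}; ``patching the resulting local pairs into a single pair'' is not a defined operation --- inverse branches of the full-branch map $F^{n_0}$ are already globally defined on $Y$, and for a fixed pair the derivative $D\psi_{h_1,h_2}$ may perfectly well vanish, or degenerate, at points far from the base point where it was constructed. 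The bound (iii$_1$) that you invoke only controls $|D(r_n\circ h)|$ from \emph{above}; it gives no mechanism to propagate a pointwise lower bound to all of $Y$, and a ``compactness-and-concatenation argument'' is not supplied. (There is also a secondary issue earlier: from $D(x_1,x_2)\ne0$ you pass to a nonzero directional derivative of $D$ along an unstable leaf, which requires differentiability of the infinite series defining $D$ along unstable manifolds and a quantitative link between $\partial_u D$ and $D\psi_{h_1,h_2}$; this is plausible under the standing smoothness assumptions but is asserted rather than argued, and (UNI) concerns the derivative of $\psi$, not its value, so the displayed estimate $\psi_{h_1,h_2}(y_0)=D(x_1,x_2)+O(\rho_0^{n_0})$ does not by itself feed into (UNI).)

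The paper sidesteps exactly this difficulty by arguing in the contrapositive and outsourcing the hard uniformity question to a known result: by \cite[Proposition~7.4]{AGY06} (equivalence of their conditions~1 and~3), failure of (UNI) for the inducing scheme is equivalent to a cohomological equation $r=\xi\circ F-\xi+\zeta$ with $\xi$ continuous and $\zeta$ constant on partition elements. One then extends $\xi,\zeta$ trivially to $X$, expresses $D_0(x,x')=\sum_{j\ge1}\{r(z_j)-r(z_j')\}$ via inverse branches, telescopes to get $D_0(x,x')=\xi(x)-\xi(x')$, and concludes $D\equiv0$ since $\xi$ is constant on stable leaves --- contradicting joint nonintegrability. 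If you want to keep your direct route, you would in effect have to reprove that Liv\v{s}ic-type dichotomy (``either some pair of branches satisfies a uniform derivative lower bound, or $r$ is cohomologous to a locally constant function''); as written, your proposal assumes its conclusion at the globalisation step.
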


\begin{proof}
For points $x,x'\in X$ with $x'\in W^u(x)$, we define
\[
D_0(x,x')=\sum_{j=1}^\infty \big\{\tau(g^{-j}x)-\tau(g^{-j}x')\big\}.
\]
Since $\tau$ is constant along stable leaves,
\begin{align*}
D(x_1,x_2)
  &=\sum_{j=1}^\infty \big\{ \tau(g^{-j}x_1)
-\tau(g^{-j}[x_1,x_2]) -\tau(g^{-j}[x_2,x_1])+\tau(g^{-j}x_2)\big\}
\\
&= D_0(x_1,[x_1,x_2])+D_0(x_2,[x_2,x_1]).
\end{align*}

Next, we find a more convenient expression for $D_0$ in terms of $r$ and $f$.
Note that for any $x\in X$, there exists $N(x)\in \Z^+$ (the number of returns to $X$ up to time $r(x)$) such that 
\[
r(x)=\sum_{\ell=0}^{N(x)-1}\tau(g^\ell x), \qquad  f(x)=g^{N(x)}x.
\]

Corresponding to the partition $\cP$ of $Y$, we define the collection $\tcP=\{\bar U\times \bar Z:U\in\cP\}$ of closed subsets of $X$.
Suppose that $x,x'\in V_0$, $V_0\in\tcP$, with $x'\in W^u(x)$.
The induced map $f:X\to X$ need not be invertible since it is not the first return to $X$.
However, we may construct suitable inverse branches 
$z_j$, $z_j'$ of $x$, $x'$ as follows.
Set $z_0=x$, $z_0'=x'$.
Since $f$ is transitive and continuous on closures of partition elements,
there exists $V_1\in\tcP$ and $z_1\in V_1$ such that $fz_1=z_0$.
Since $F$ is full-branch, $f(W^u(z_1)\cap V_1)\supset W^u(z_0)$, so there exists $z_1'\in W^u(z_1)\cap V_1$ such that $fz_1'=z_0'$.
Inductively, we obtain $V_n\in\tcP$ and $z_j,z_j'\in V_n$ with $z_j'\in W^u(z_j)$ such that $fz_j=z_{j-1}$ and $fz_j'=z_{j-1}'$.

By construction, $z_{j-1}=fz_j=g^{N(z_j)}z_j$.
Hence $z_j=g^{-(N(z_1)+\dots+N(z_j))}x$
and 
\[
r(z_j) 
=\sum_{\ell=0}^{N(z_j)-1}\tau(g^\ell g^{-(N(z_1)+\dots+N(z_j))}x)
=\sum_{\ell=N(z_1)+\dots+N(z_{j-1})+1}^{N(z_1)+\dots+N(z_j)}\tau(g^{-\ell}x).
\]
A similar expression holds for $r(z_j')$.
Hence
\[
D_0(x,x')=
 \sum_{j=1}^\infty \big\{ r(z_j)
-r(z_j')\big\}.
\]

We are now in a position to complete the proof of the lemma, showing that if 
(UNI) fails, then $D\equiv0$.
To do this, we make use of~\cite[Proposition~7.4]{AGY06} (specifically the equivalence of their conditions~1 and~3). Namely, the failure of the (UNI) condition in Section~\ref{sec:semiflow} means that
we can write $r=\xi\circ F-\xi+\zeta$ on $Y$ where $\xi:Y\to\R$ is continuous (even $C^1$) and
$\zeta$ is constant on partition elements $U\in\cP$.
Extending $\xi$ and $\zeta$ trivially to $X=Y\times Z$, we obtain that
$r=\xi\circ f-\xi+\zeta$ on $X$ where $\xi:X\to\R$ is continuous and constant on stable leaves, and
$\zeta$ is constant on elements $V\in\tcP$.
In particular,
\[
\sum_{j=1}^n r(z_j)=\sum_{j=1}^n \big\{\xi(z_{j-1}-\xi(z_j)+\zeta(z_j)\big\}
=\xi(x)-\xi(z_n)+\sum_{j=1}^n \zeta(z_j).
\]
For $x,x'\in V_0$, $V_0\in\tcP$, with $x'\in W^u(x)$, 
it follows that
\[
\sum_{j=1}^n \big\{ r(z_j) -r(z_j')\}
 =\xi(x)-\xi(x')- \xi(z_n) +\xi(z_n').
\]
Taking the limit as $n\to\infty$, we obtain that
$D_0(x,x')=\xi(x)-\xi(x')$.
Hence
$D(x_1,x_2)=\xi(x_1)-\xi([x_1,x_2])-\xi([x_2,x_1])+\xi(x_2)$.
Since $\xi$ is constant on stable leaves, $D(x_1,x_2){=0}$ as required.
\end{proof}

\begin{pfof}{Theorem~\ref{thm:A}} 
By Proposition~\ref{prop:app3} and Lemma~\ref{lem:D}, $f_t$ is a $C^{1+\alpha}$ uniformly hyperbolic flow satisfying (UNI).
The result for $C^{1+\alpha}$ observables follows from Theorem~\ref{thm:flow}.
As in~\cite{Dolgopyat98a}, the result follows from a standard interpolation argument (see also~\cite[Corollary~2.3]{AraujoM16}).
\end{pfof}

 \paragraph{Acknowledgements}
This research was supported in part by CMUP, which is financed by national funds through FCT - Funda\c c\~ao para a Ci\^encia e a Tecnologia, I.P., under the project with reference UIDB/00144/2020. PV also acknowledges financial support from the project PTDC/MAT-PUR/4048/2021 and from the grant CEECIND/03721/2017 of the Stimulus of Scientific Employment, Individual Support 2017 Call, awarded by FCT.
The authors are also grateful to Funda\c c\~ao Oriente for the financial support during the Pedro Nunes Meeting in Convento da Arr\'abida,
 where part of this work was done.

\end{document}